\definecolor{darkgreen}{RGB}{65,165,118}
\definecolor{darkred}{RGB}{180,0,0}
\definecolor{darkblue}{RGB}{0,0,180}
\definecolor{ocre}{RGB}{204,119,34}
\definecolor{marro}{RGB}{103,64,58}
\newif\ifshowmynotecmd \showmynotecmdtrue
\DeclareMathAlphabet\EuR{U}{eur}{m}{n}
\SetMathAlphabet\EuR{bold}{U}{eur}{b}{n}
\newcommand{\gr}{\curs{Groups}}
\newcommand{\curs}{\EuR}
\newcommand{\Sets}{\curs{Sets}}
\newcommand{\sSets}{\mathbf{S}}
\newcommand{\Cdiag}{\mathbf{S}^{\calc}}
\newcommand{\aCdiag}{\mathbf{S}^{\acalc}}
\newcommand{\Gsets}{\textup{$\Gamma_\calc\op$-sets}}
\newcounter{let} \setcounter{let}{0}
\loop\stepcounter{let}
\edef\csname cal\alph{let}\endcsname
\newcommand{\opdefnolim}[2][]{\expandafter\newcommand\csname#2\endcsname
{#1\operatorname{#2}\nolimits}}
\newcommand{\mapc}{\map_{\calc}}
\newcommand{\autc}{\aut_{\calc}}
 \newcommand{\opdef}[2][]{\expandafter\newcommand\csname#2\endcsname
{#1\operatorname{#2}}}
\newcommand{\acalc}{\curs{a}\calc}
\renewcommand{\a}{\curs{a}}
\newcommand{\mapa}{\map_{\a}}
\newcommand{\holim@}[2]{
  \vtop{\m@th\ialign{##\cr
    \hfil$#1\operator@font holim$\hfil\cr
    \noalign{\nointerlineskip\kern1.5\ex@}#2\cr
    \noalign{\nointerlineskip\kern-\ex@}\cr}}
}
\newcommand{\holim}{
  \mathop{\mathpalette\holim@{\leftarrowfill@\textstyle}}\nmlimits@
}
 \renewcommand{\Im}{\operatorname{Im}\nolimits}
\newcommand{\tdef}[2][]{\expandafter\newcommand\csname#2\endcsname
{#1\textup{#2}}}
\newcommand{\N}{\mathbb{N}}
\newcommand{\defeq}{\overset{\text{\textup{def}}}{=}}
\newcommand{\conj}[2]{\{\,#1\,|\,#2\,\}}
\newcommand{\Conj}[2]{\bigl\{\,#1\,\bigm|\,#2\,\bigr\}}
\newcommand{\con}[1]{\{\,#1\,\}}
\newcommand{\widebar}[1]{\overset{\mskip2mu\hrulefill\mskip2mu}{#1}
                \vphantom{#1}}
\newcommand{\3}[1]{\widebar{#1}}
\newcommand{\4}[1]{\widehat{#1}}
\newcommand{\5}[1]{\widetilde{#1}}
\newcommand{\9}[1]{{}^{#1}\kern-1pt{}}
\newcommand{\longleft}[1]{\;{\leftarrow
\count255=0 \loop \mathrel{\mkern-6mu}
    \relbar\advance\count255 by1\ifnum\count255<#1\repeat}\;}
\newcommand{\longright}[1]{\;{\count255=0 \loop \relbar\mathrel{\mkern-6mu}
    \advance\count255 by1\ifnum\count255<#1\repeat\rightarrow}\;}
\newcommand{\Right}[2]{\overset{#2}{\longright#1}}
\newcommand{\RIGHT}[3]{\mathrel{\mathop{\kern0pt\longright#1}
        \limits^{#2}_{#3}}}
\newcommand{\Left}[2]{{\buildrel #2 \over {\longleft#1}}}
\newcommand{\LEFT}[3]{\mathrel{\mathop{\kern0pt\longleft#1}\limits^{#2}_{#3}}
}
\newcommand{\dRIGHT}[3]{\mathrel{
   \mathop{\vcenter{\baselineskip=0pt\hbox{$\kern0pt\longright#1$}
   \hbox{$\kern0pt\longright#1$}}}\limits^{#2}_{#3}}}
\newcommand{\LRIGHT}[3]{\mathrel{
   \mathop{\vcenter{\baselineskip=0pt\hbox{$\kern0pt\longleft#1$}
   \hbox{$\kern0pt\longright#1$}}}\limits^{#2}_{#3}}}
\newcommand{\RLEFT}[3]{\mathrel{
   \mathop{\vcenter{\baselineskip=0pt\hbox{$\kern0pt\longright#1$}
   \hbox{$\kern0pt\longleft#1$}}}\limits^{#2}_{#3}}}
\newcommand{\onto}[1]{\;{\count255=0 \loop \relbar\joinrel
    \advance\count255 by1
    \ifnum\count255<#1 \repeat \twoheadrightarrow}\;}
\newcommand{\dn}{{\downarrow}}
\newtheorem{Teo}{Theorem}[section]
\newtheorem{Pro}[Teo]{Proposition}
\newtheorem{Cor}[Teo]{Corollary}
\newtheorem{Lem}[Teo]{Lemma}
\newtheorem{Thm}{Theorem}
\theoremstyle{definition}
\newtheorem{Def}[Teo]{Definition}
\newtheorem{Nota}[Teo]{Remark}
\newtheorem{Ej}[Teo]{Example}
\newtheorem{Notation}[Teo]{Notation}
\theoremstyle{remark}
\title{Minimality in diagrams of simplicial sets}
\author{Carles Broto}
\address{Departament de Matem\`atiques, Universitat Aut\`onoma de
Barcelona, E--08193 Bellaterra, Spain}
\email{broto@mat.uab.cat}
\thanks{C.\ Broto and C.\ Giraldo are partially supported by FEDER-MINECO
Grant MTM2016-80439-P and AGAUR Grant 2017SGR1725}
\author{Ram\'on Flores}
\address{Departamento de Geometría y Topología, Universidad de Sevilla, E--41012, Sevilla, Spain}
\email{ramonjflores@us.es}
\thanks{R.\ Flores is partially supported by FEDER-MINECO Grant MTM2016-76453-C2-1-P}
\author{Carlos Giraldo}
\address{Facultad de Ciencias Naturales y Matem\'{a}ticas, Universidad del Rosario, 111711 Bogotá, Colombia}
\email{carlosan.giraldo@urosario.edu.co}
\thanks{}
\date{}
\begin{document}

\maketitle

\begin{abstract}

We formulate the concept of minimal fibration in the context of fibrations in the model category $\mathbf{S}^\mathcal{C}$ of
$\mathcal{C}$-diagrams of simplicial sets, for a small index category $\mathcal{C}$.
When $\mathcal{C}$ is an $EI$-category satisfying some mild finiteness restrictions,
we show that every fibration of $\mathcal{C}$-diagrams admits a well-behaved minimal model.
As a consequence, we establish a classification theorem for fibrations in
$\mathbf{S}^\mathcal{C}$ over a constant diagram, generalizing the classification theorem of
Barratt, Gugenheim, and Moore for simplicial fibrations \cite{Barrat}.

\end{abstract}

\section{Introduction}

After the introduction by D.~Kan in the fifties of the theory of simplicial sets and Kan fibrations, M.G.~Barratt,
V.K.~Gugenheim, and J.C.Moore developed in their celebrated paper \cite{Barrat} the appropriate version of simplicial fibre bundles, which in particular are Kan fibrations when the fibre is a Kan complex. Conversely, given a Kan fibration, these authors proposed the notion of a \emph{minimal simplicial fibration},
a fibrewise deformation retract of the original fibre map, which always exists and
 is itself a simplicial fibre bundle, provided the base of the fibration is connected.  In this way, the  classification of simplicial fibrations reduces to the classification of simplicial fibre bundles. The latter is achieved by associating to every fibre bundle a twisted cartesian product (TCP), where the twisting functions take values in the structure group $G$ of the bundle. Then, the simplicial group $G$ provides a
classifying space $\3{W}(G)$ and a universal fibre bundle such that every $G$-bundle over a base $B$ can be obtained as a pullback of a map $B\rightarrow\3{W}(G)$ along this universal bundle.

Denote by $\mathbf{S}$ the category of simplicial sets. Given a small category $\mathcal{C}$, the main goal of our work has been to extend Barratt-Gugenheim-Moore framework to the category $\mathbf{S}^{\mathcal{C}}$ of $\mathcal{C}$-diagrams of simplicial sets assuming that the fibrations involved are defined over a constant base. The first observation is that the category of diagrams possess a structure of cofibrantly generated model category and inherits a simplicial structure from $\mathbf{S}$ \cite{Hirschhorn}. Then, assuming always a constant base $B$, there exist natural translations of the notions of simplicial fibration, simplicial bundle or twisted cartesian product.

Extending the notion of minimal fibration is more involved. To this aim we define a full subcategory $\Gamma$
 of $\mathbf{S}^{\mathcal{C}}$ whose set of objects is given by the
 free $\mathcal{C}$-diagrams $\delta^c_n$ over the standard $n$-simplex $\Delta[n]$
 (see Definition \ref{D: Categ.Gamma}).
 Identifying  $\calc$-diagrams with $\Gamma\op$-sets,
 the free diagrams play in this context the role of the standard simplices in the classical theory, and become the basic building blocks
throughout we may study any $\mathcal{C}$-diagram $X$. In particular, a $n$-$c$-simplex will simply be a natural transformation from a free
diagram $\delta^c_n$ to $X$, for any $n\geq 0$ and $c\in\Ob(\calc)$,

Now it is possible to define a \emph{sub-$p$-homotopy}  relation for the set of
$n$-$c$-simplices of the total space of fibration of $\mathcal{C}$-diagrams for all  $n\geq 0$ and $c\in\mathcal{C}$
(Definition \ref{D.Homotop-C-simp.}),  and then the notion of
minimal fibration  (Definition \ref{D:Minimal fibration-diagrams}). If $\mathcal{C}$ is an
arbitrary small category the shape of the orbits of a free diagram can be elusive. However, when
 $\mathcal{C}$ is an \emph{artinian}
$EI$-category (Definition \ref{D:wee-descendant}), our main theorem establishes the existence of minimal $\calc$-fibrations:

\begin{Thm}\label{T:const-min-fibrat(A)}
Let $\mathcal{C}$ be a small artinian $EI$-category. If $p:X\Right1{} B$ is a fibration in $\mathbf{S}^{\mathcal{C}}$ for
which $X$ is a free diagram, then $p$ has a strong fibrewise deformation retract, $q:\hat{X}\Right1{} B$, which is  minimal and where $\hat{X}$ is a free $\calc$-diagram.
\end{Thm}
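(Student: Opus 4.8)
The plan is to mimic the classical Barratt–Gugenheim–Moore construction of a minimal fibration, but carried out cell-by-cell over the category $\Gamma$ of free diagrams rather than over the simplex category. First I would fix the fibration $p\colon X\to B$ with $X$ free and, identifying $\calc$-diagrams with $\Gamma\op$-sets, work with the $n$-$c$-simplices of $X$, i.e.\ natural transformations $\delta^c_n\to X$. Using the sub-$p$-homotopy relation of Definition \ref{D.Homotop-C-simp.} I would, for each pair $(n,c)$, choose a set of representatives of the sub-$p$-homotopy classes of nondegenerate $n$-$c$-simplices whose boundary already lies in the part of $\hat X$ constructed so far, and let $\hat X$ be the sub-$\Gamma\op$-set they generate. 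The induction is organised by a well-ordering of the cells of $X$ compatible with dimension $n$ and — crucially — with the artinian partial order on $\Ob(\calc)$ coming from the $EI$/artinian hypothesis (Definition \ref{D:wee-descendant}), so that faces of a given cell, which may involve strictly smaller objects of $\calc$, are always treated earlier. One has to check that $\hat X$ is closed under the $\Gamma\op$-action, in particular under the structure maps induced by morphisms of $\calc$, which is where the artinian condition does the real work: it guarantees the recursion bottoms out and that "minimal below" is an inductive hypothesis one can actually carry.

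The second block of work is to produce the strong fibrewise deformation retraction $r\colon X\to \hat X$ over $B$ and show that $\hat X$ is again free. The retraction is built by the usual anodyne-extension argument: having defined $r$ compatibly on the subdiagram generated by cells already handled, one extends over each new cell by solving a lifting problem against the acyclic cofibration $\delta^c_n\times\Delta[1]\cup (\partial\delta^c_n\cup\delta^c_n\times\{0,1\})\hookrightarrow \delta^c_n\times\Delta[1]$ in $\mathbf{S}^{\calc}$, using that $p$ is a fibration and $B$ is constant so that the homotopies stay fibrewise. Here I would invoke the simplicial/model structure on $\mathbf{S}^{\calc}$ recalled in the introduction and the fact (from the cited \cite{Hirschhorn} framework) that products of free diagrams with simplicial sets behave well, so that $\hat X$, being generated by the chosen free building blocks $\delta^c_n$, is itself a free $\calc$-diagram. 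That $q=p|_{\hat X}$ is a fibration follows because a strong fibrewise deformation retract of a fibration over a fixed base is a fibration, and minimality of $q$ is then immediate from the construction: two $n$-$c$-simplices of $\hat X$ that are sub-$p$-homotopic rel boundary were, by choice of representatives, already equal.

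The step I expect to be the main obstacle is the inductive construction of $\hat X$ as a \emph{sub-$\Gamma\op$-set} that is simultaneously closed under all the diagram structure maps and still "minimal" in every object-component. In the one-object (classical) case the orbits of $\Delta[n]$ are transparent, but for a general artinian $EI$-category the orbit structure of a free diagram $\delta^c_n$ — how its $n'$-$c'$-simplices for $c'\neq c$ sit inside it — is exactly what the paper warns can be "elusive". Making the cell-by-cell choice of homotopy-class representatives at object $c$ compatible with the choices already forced at the smaller objects $c'$ that appear in the faces of $\delta^c_n$, and verifying that the resulting subdiagram is stable under the action of every morphism of $\calc$ (not merely the face and degeneracy operators), is the delicate bookkeeping; the artinian hypothesis is precisely what lets this be set up as a transfinite induction that terminates, and I would expect a good part of the proof to be devoted to formalising that compatibility, presumably via a lemma (established earlier in the paper) describing the non-degenerate simplices of a free diagram and their boundaries.
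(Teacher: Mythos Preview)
Your overall strategy---build a free subdiagram $\hat X$ by choosing representatives and then produce the retraction by anodyne extensions---is right, but there is one genuine imprecision and the organization you describe differs substantially from the paper's.

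The imprecision: the relation $\preceq$ of Definition~\ref{D.Homotop-C-simp.} is a \emph{preorder}, not an equivalence relation, so ``choosing representatives of the sub-$p$-homotopy classes'' does not yield a minimal basis. If $x\preceq y$ but $y\not\preceq x$, then $x$ and $y$ lie in distinct classes of the associated equivalence $\sim$, and picking one representative from each still leaves two comparable generators in your basis, violating Definition~\ref{D:Minimal fibration-diagrams}. What is actually needed is a subset $\Sigma'\subseteq\Sigma$ that meets only $\preceq$-\emph{minimal} classes and still dominates every element of $\Sigma$ from below. The artinian $EI$ hypothesis enters precisely here---not to make a transfinite cell induction terminate, but to guarantee that such a $\Sigma'$ exists. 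The paper isolates this as a separate result about preordered sets (Proposition~\ref{E:Existen-of-A'} and Remark~\ref{E:Existen-of-refined-A'} in the Appendix).

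The organization: the paper does not run a cell-by-cell induction over $(n,c)$. Instead it (i) fixes a basis $\Sigma$ of $X$ and chooses $\Sigma'\subseteq\Sigma$ \emph{globally} as above, refined so as to prefer degenerate simplices; (ii) shows $\Sigma'$ is closed under degeneracies; (iii) uses Zorn's lemma to pick a \emph{maximal} free subdiagram $(\hat X,\hat\Sigma)$ with $\hat\Sigma\subseteq\Sigma'$; and (iv) uses Zorn's lemma a second time to extend the constant homotopy on $\hat X$ to a fibrewise deformation of all of $X$ into $\hat X$. The compatibility with morphisms of $\calc$ that you flagged as the main obstacle is handled not by inductive bookkeeping but by the fact that any subdiagram generated by a subset of a basis is automatically free and $\calc$-closed; the genuinely delicate point is rather that faces of $\Sigma'$-generators fall back into $\hat X$, and this is exactly what maximality of $(\hat X,\hat\Sigma)$ buys (the paper's Step~3). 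Two minor corrections: $B$ is \emph{not} assumed constant in Theorem~\ref{T:const-min-fibrat(A)}, and the inclusion you wrote for the anodyne extension (with both ends $\{0,1\}$ included) is the full cylinder boundary and not a trivial cofibration; the paper uses $\dot\delta^c_n\times\Delta[1]\cup\delta^c_n\times\{1\}\hookrightarrow\delta^c_n\times\Delta[1]$ followed by a $\Lambda^0[2]$-filling.
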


Observe that Example~\ref{ExampleNop} shows that some restrictions on the structure of the index
category are unavoidable in order to always find minimal models for the fibrations.

Now consider a simplicial set $B$, a category $\mathcal{C}$, and the composition $\mathcal{C}\rightarrow\{\ast\}\rightarrow \mathbf{S} $ of the trivial functor with the functor that takes the point to $B$. Slightly abusing notation, we will also denote this diagram by $B$ and call it a \emph{$\mathcal{C}$-diagram over a constant base $B$}.
Now, Theorem~\ref{T:const-min-fibrat(A)} opens the way to extend all the Barrat-Gugenheim-Moore program
to $\mathcal{C}$-diagrams over a constant base. As an intermediate step, we prove that that any $\mathcal{C}$-fibre
bundle over a constant diagram $B$ with fibre $F$ is equivalent to a $\mathcal{C}$-twisted cartesian product (and viceversa), and this allows to classify $\mathcal{C}$-fibre bundles. We denote by $aut_{\calc}(F)$ the simplicial group of automorphisms of the diagram $F$.

\begin{Thm}\label{T:Classif-C-fibre(B)} Let $\calc$ be a small category, $F$ a $\mathcal{C}$-diagram, $G$ a simplicial
subgroup of $aut_{\calc}(F)$ and $B$ a simplicial set. Then the homotopy classes of maps $[B,\overline{W}G]$ from $B$ to $\overline{W}G$ are in bijective correspondence with the equivalence classes of
$\mathcal{C}$-fibre bundles with fibre $F$, constant base $B$ and group $G$.
\end{Thm}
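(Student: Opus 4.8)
The plan is to mimic the classical Barratt--Gugenheim--Moore argument, transposed to the diagram setting, using the translation between $\calc$-fibre bundles over a constant base and $\calc$-twisted cartesian products (TCPs) established earlier in the paper. First I would recall that a $\calc$-TCP with fibre $F$, base $B$ and group $G \le \aut_\calc(F)$ is determined by a twisting function $\tau\colon B \to G$ in the usual sense (degree lowering on $B_n$, satisfying the simplicial identities with respect to the face and degeneracy operators of $G$ and $B$). The key point is that such twisting functions are exactly simplicial maps $B \to \3{W}(G)$: recall that $\3{W}(G)_n = G_{n-1}\times G_{n-2}\times\cdots\times G_0$ with the standard face/degeneracy formulas, and that giving a simplicial map $f\colon B\to\3{W}(G)$ is equivalent to giving its last coordinate $\tau = \mathrm{pr}_1\circ f\colon B_n\to G_{n-1}$, which is precisely a twisting function. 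This identification is purely about the simplicial group $G$ and the simplicial set $B$, so it is completely unchanged from the classical case; nothing about $\calc$-diagrams enters here.

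Next I would set up the correspondence on equivalence classes. On one side, two simplicial maps $f_0, f_1\colon B\to\3{W}(G)$ that are homotopic give, by pulling back the universal bundle (or equivalently by the corresponding simplicial homotopy of twisting functions), equivalent $\calc$-TCPs; conversely, an equivalence of $\calc$-TCPs over $B$ with the same fibre and group yields a homotopy of the corresponding twisting functions, hence of the classifying maps. Here I would use that the universal $G$-bundle $\3{W}(G)\to\3{W}(G)$ (i.e. the $G$-TCP $E(G)=G\times_\tau\3{W}(G)\to\3{W}(G)$, with $E(G)$ contractible) exists already at the level of simplicial sets, and that applying the constant-diagram functor $F\mapsto$ (the $\calc$-diagram with that fibre, twisted by $G\le\aut_\calc(F)$) is functorial and exact enough to transport pullback squares; thus a $\calc$-TCP with fibre $F$ and twisting function $\tau$ is the pullback along $f_\tau\colon B\to\3{W}(G)$ of the universal $\calc$-object $F\times_\tau E(G)\to\3{W}(G)$. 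The bijection $[B,\3{W}(G)]\leftrightarrow\{\text{$\calc$-fibre bundles with fibre }F\text{, base }B\text{, group }G\}/\simeq$ then factors as: homotopy classes of classifying maps $\leftrightarrow$ equivalence classes of $\calc$-TCPs $\leftrightarrow$ equivalence classes of $\calc$-fibre bundles, where the last bijection is exactly the TCP--bundle equivalence theorem quoted from earlier in the paper, and the first is the classical $\3{W}$-classification applied verbatim.

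The step I expect to be the main obstacle is checking that the TCP--bundle dictionary genuinely respects the \emph{group} $G$ and the notion of equivalence, rather than just the underlying fibration: one must verify that an equivalence of $\calc$-fibre bundles with structure group $G$ corresponds to a fibrewise equivalence of TCPs whose comparison map is given by a map $B\to G$ (a ``$0$-cochain'' in the twisting-function complex), so that the induced relation on twisting functions is precisely the simplicial homotopy relation detected by $[B,\3{W}(G)]$. This is where one has to be careful that $G$ is a genuine simplicial \emph{subgroup} of $\aut_\calc(F)$ and that all the twisting and equivalence data can be taken $G$-valued and $\calc$-equivariantly; once that bookkeeping is done, the remainder is the classical argument of \cite{Barrat} with $\mathbf{S}$ replaced by $\mathbf{S}^\calc$ and ordinary automorphism groups replaced by $\aut_\calc(F)$. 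I would also need to invoke Theorem~\ref{T:const-min-fibrat(A)} (or its bundle corollary) at the point where one reduces an arbitrary $\calc$-fibre bundle to a TCP, since that reduction rests on the existence of minimal models with free total space.
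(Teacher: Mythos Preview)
Your overall strategy is correct and matches the paper's: the paper's proof is the one-liner ``immediate consequence of Theorem~\ref{Tveo} and the classical classification of principal $G$-bundles,'' and your chain of bijections
\[
[B,\3W G]\ \longleftrightarrow\ \{\text{twisting functions }B\to G\}/{\sim}\ \longleftrightarrow\ \{\text{$G$-$\calc$-TCPs}\}/{\sim}\ \longleftrightarrow\ \{\text{$G$-$\calc$-fibre bundles}\}/{\sim}
\]
is exactly the content of that sentence, unpacked. The ``main obstacle'' you flag (that the equivalence relations on bundles, TCPs, and twisting functions agree and stay $G$-valued) is handled in the paper by Propositions~\ref{fibrequivalent} and~\ref{D:Map of C-TCP's}, so you may simply cite those.

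There is one genuine error in your last paragraph: you do \emph{not} need Theorem~\ref{T:const-min-fibrat(A)} anywhere in this argument. The reduction of a $G$-$\calc$-fibre bundle to a $\calc$-TCP is Proposition~\ref{psi-surj}, and it uses only the existence of a regular normalized $G$-atlas (Proposition~\ref{reg. atlases}), not minimal models. This matters: Theorem~\ref{T:Classif-C-fibre(B)} is stated for an \emph{arbitrary} small category $\calc$, with no artinian or $EI$ hypothesis, so invoking Theorem~\ref{T:const-min-fibrat(A)} would be both unnecessary and illegitimate here. Minimal fibrations enter only in Theorem~\ref{T:Clas-C-Fib-1(C)}, where one must pass from arbitrary fibrations to fibre bundles.
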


Now we can use Theorem \ref{T:const-min-fibrat(A)}, Theorem \ref{T:Classif-C-fibre(B)}, and Quillen's small object argument to classify
fibrations in $\mathbf{S}^\mathcal{C}$.

\begin{Thm}\label{T:Clas-C-Fib-1(C)} Let $\mathcal{C}$ be a small artinian $EI$-category,
$B$ a connected simplicial set, and $F$ a $\mathcal{C}$-diagram. Then there is a bijective
correspondence between the set $[B,B\haut_{\calc}(F)]$ of homotopy classes of maps
from $B$ to $B\haut_{\calc}(F)$ and the set of weak homotopy classes of fibrations
over the constant diagram $B$ whose fibres are weakly homotopy equivalent to $F$.
\end{Thm}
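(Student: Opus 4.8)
The plan is to reduce Theorem \ref{T:Clas-C-Fib-1(C)} to Theorem \ref{T:Classif-C-fibre(B)} by replacing an arbitrary fibration with a fibre bundle having structure group $\haut_{\calc}(F)$, where $\haut_{\calc}(F)$ is the simplicial monoid of self weak equivalences of $F$ suitably rigidified into a group. First I would fix a fibration $p\colon X\Right1{} B$ over the constant diagram $B$ whose fibre is weakly equivalent to $F$. Using Quillen's small object argument (functorial factorization in the cofibrantly generated model category $\mathbf{S}^{\calc}$, see \cite{Hirschhorn}), I would factor $X\to \ast$ through a cofibrant replacement so as to assume $X$ is a free $\calc$-diagram; since $B$ is a simplicial set (hence its constant diagram has the relevant cofibrancy and properness), the fibrewise version of this replacement keeps $p$ a fibration and does not change its fibrewise weak homotopy type. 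Then Theorem \ref{T:const-min-fibrat(A)} applies: $p$ admits a minimal model $q\colon \hat X\Right1{} B$ which is a strong fibrewise deformation retract of $p$, with $\hat X$ free. In particular $p$ and $q$ represent the same weak homotopy class of fibrations, so it suffices to classify minimal fibrations over $B$ with fibre (weakly equivalent to, hence by minimality isomorphic to) $F$.

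The second step is to identify minimal fibrations over a constant connected base with $\calc$-fibre bundles. A minimal fibration over a connected base has, by the diagram analogue of the Barratt--Gugenheim--Moore rigidity argument, all fibres isomorphic as $\calc$-diagrams — one transports the fibre along edges of $B$ using the minimal fibration's unique-lifting-up-to-$p$-homotopy property, and minimality forces these transport isomorphisms to be honest isomorphisms of diagrams rather than mere homotopy equivalences. This exhibits $q$ as a $\calc$-fibre bundle with fibre $F$ over the constant base $B$, with structure "group" the simplicial monoid $\aut_{\calc}(F)$ of self-isomorphisms of $F$; and because $B$ is connected, the transition data assemble into a genuine (twisted cartesian product) cocycle. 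The structure monoid of such bundles arising from fibrations with fibre weakly equivalent to $F$ is not all of $\aut_{\calc}(F)$ but rather the monoid of self weak equivalences, so the honest comparison is with bundles whose structure group is a rigid model $G$ for $\haut_{\calc}(F)$; applying Theorem \ref{T:Classif-C-fibre(B)} with this $G$ gives $[B,\overline W G]=[B,B\haut_{\calc}(F)]$ and a bijection with equivalence classes of such bundles.

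The third step is to check that the two passages — "fibration $\rightsquigarrow$ minimal model $\rightsquigarrow$ bundle" and "bundle $\hookrightarrow$ fibration" — are mutually inverse on the level of the relevant equivalence classes. In one direction, a $\calc$-fibre bundle with fibre $F$ is in particular a fibration with fibre $F$ (the projection of a twisted cartesian product is a fibration in $\mathbf{S}^{\calc}$ since it is fibrewise so and $B$ is a simplicial set), so it defines a weak homotopy class of fibrations with fibre weakly equivalent to $F$; two bundles that are equivalent as bundles are in particular fibrewise weakly equivalent, so this is well defined. In the other direction, the minimal model is unique up to isomorphism over $B$ (again by the diagram version of the BGM uniqueness of minimal fibrations), so the assignment of a bundle class to a fibration is well defined and independent of the choices made in the small object argument. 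Composing: starting from a bundle, its minimal model recovers the bundle up to equivalence because a minimal fibration that is already a bundle is its own minimal model; starting from a fibration, passing to the associated bundle and back to a fibration recovers the original weak homotopy class because a fibration and its minimal model are weakly equivalent. Chaining these bijections with Theorem \ref{T:Classif-C-fibre(B)} yields the desired bijection $[B,B\haut_{\calc}(F)] \longleftrightarrow \{\text{weak homotopy classes of fibrations over }B\text{ with fibre }\simeq F\}$.

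I expect the main obstacle to be the second step: showing that a minimal $\calc$-fibration over a connected base is genuinely a $\calc$-fibre bundle, i.e.\ that the fibre is locally (hence, over a connected simplicial base, globally up to a twisting cocycle) constant \emph{as a diagram}. In the classical BGM theory this rests on the fact that a minimal Kan fibration restricted over each simplex of the base is isomorphic to a product; in the diagram setting one must run the same inductive-over-skeleta argument while controlling the orbit structure of the free diagrams $\delta^c_n$ — precisely the point where the artinian $EI$-hypothesis on $\calc$ (via Theorem \ref{T:const-min-fibrat(A)}) is indispensable. A secondary subtlety is the rigidification of $\haut_{\calc}(F)$ into an honest simplicial group $G$ with $\overline W G \simeq B\haut_{\calc}(F)$ and the verification that the bundles produced by minimal fibrations with fibre $\simeq F$ are exactly the $G$-bundles, which is the standard monoid-to-group replacement but must be carried out compatibly with the $\calc$-action.
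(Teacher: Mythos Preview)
Your overall architecture matches the paper's: replace the total space by a free $\calc$-diagram, pass to a minimal model via Theorem~\ref{T:const-min-fibrat(A)}, recognize the minimal fibration as a $\calc$-fibre bundle over the connected base, and invoke Theorem~\ref{T:Classif-C-fibre(B)}. Your ``main obstacle'' (minimal fibration $\Rightarrow$ fibre bundle) is handled in the paper exactly as you sketch, via pullback to simplices and the isomorphism-rigidity of minimal fibrations (Corollary~\ref{C:Min.fib is FB}, resting on Lemmas~\ref{L:pullback of min.f} and~\ref{L:f.h.e} and Corollary~\ref{C:Isomorphism between minimal fibrations}); the uniqueness of minimal models you need for well-definedness is Proposition~\ref{(C)2ndPart}.

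There is, however, a real gap in your identification of the fibre and the structure group. You assert that the minimal model has fibre ``weakly equivalent to, hence by minimality isomorphic to, $F$'', and then worry about cutting down $\aut_\calc(F)$ to a monoid of self-weak-equivalences which must be rigidified. But the fibre of the minimal model is not $F$: it is a \emph{minimal cofibrant-fibrant replacement} $MF$ of $F$ (Proposition~\ref{(C)1stPart}), and $MF\not\cong F$ unless $F$ was already minimal. The resulting bundle has fibre $MF$ and structure group the honest simplicial group $\aut_\calc(MF)$ --- the full automorphism group, no submonoid, no rigidification. What remains is to identify $\overline{W}\aut_\calc(MF)$ with $B\haut_\calc(F)$, and the paper does this separately (Proposition~\ref{haut}) using the Dwyer--Kan description of $\map_\calc$ as a homotopy limit over the twisted arrow category: the key point is that a self-weak-equivalence of the minimal object $MF$ is automatically an isomorphism (Corollary~\ref{C:Isomorphism between minimal fibrations}), so the components of $\map_\calc(MF,MF)$ lying in $\aut_\calc(MF)$ are exactly those corresponding to $\haut_\calc(F)$. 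Once you route through $MF$ rather than $F$, your ``secondary subtlety'' disappears.
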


Roughly speaking, for a fibrant $\calc$-diagram $F$, $\haut_\calc(F)$ is described as
the set of self equivalences of $F$ in the homotopy category that admit a
rigidification in the category of simplicial sets. When $F$ is not fibrant we use
a fibrant replacement. The precise definition appears in Section~\ref{Classifibrations}, and we also
show there, elaborating on results of Dwyer-Kan \cite{Dwyer}, that it is a loop space.

This classification is related with the results of Dwyer-Kan-Smith in \cite{Dwyer2},
 classifying towers of fibrations. They construct inductively a classifying
 space $B(EY_1\ldots EY_n)$ by means of an appropriate homotopical version of
 the wreath product of groups. A tower of fibrations
$X_n\Right0{} X_{n-1}\Right0{}\dots \Right0{} X_1\Right0{} X_0$, where
$Y_i$ is the fibre of the map $X_i\Right0{} X_{i-1}$,
might be seen as a $\calc$-fibration over the constant base $X_0$, where
$\calc$ is the finite poset $\con{n\Right0{}\dots \Right0{}1}$, and with
fibre the diagram $F= F_n\Right0{} F_{n-1}\Right0{}\dots \Right0{} F_1$,
being $F_i$ the fibre of the composite fibration $X_i\Right0{} X_0$.
With this interpretation, the tower is classified by $B\haut_\calc(F)$, and this
turns out to be a connected component of Dwyer-Kan-Smith classifying space.

Part of the motivation for our study arose from our attempts to dualize to the augmented case classical results (\cite{Farjoun}, \cite{Bousfield}) of preservation of fibrations under localization functors. In this context, giving a fibration $F\rightarrow E\rightarrow B$ and a functor $L$, the problem is to produce another fibration $LF\rightarrow E'\rightarrow B$ that is naturally mapped from the original one. Our research about this subject, which corresponds to the case of a diagram of two objects and one morphism, will appear in a separate paper. Observe that the base of the fibration does not change throughout the process.

It is also worth mentioning the recent work of Blomgren-Chach\'olski \cite{Blomgren}.
Given objects $X$ and $F$, they define $\textrm{Fib}(X,F)$  as the category
whose objects are all the possible fibrations with base $X$ and fibre weakly homotopy
equivalent to $F$.  One of the innovative ideas of their work lies in the description
of the homotopy type of $\textrm{Fib}(X,F)$ and not only of its connected components,
thus providing a refinement of the classical classification theorems.
They develop the notion of \emph{core} of a category in order to overcome
set-theoretic difficulties that derive from the fact that $\textrm{Fib}(X,F)$ is not
a small category in general. Although their results are valid in any model category,
the point of view is different from ours, as we propose a more classical and
combinatorial approach  in the model category of $\mathcal{C}$-diagrams.

The paper is structured as follows. In Section \ref{S:Simpli-Prelim} we describe the structure of the category of diagrams of simplicial sets over a small index  category. The theory of minimal fibrations in this category is developed in Section \ref{MinFib}, where we prove Theorem \ref{T:const-min-fibrat(A)}. Section \ref{S:C-fibre bundles} is devoted to discuss and classify $\mathcal{C}$-fibre bundles, a goal that is achieved in Theorem \ref{T:Classif-C-fibre(B)}. Last section culminates the paper, by proving the classification result (Theorem \ref{T:Clas-C-Fib-1(C)}) for fibrations in the category of $\mathcal{C}$-diagrams. We conclude with an appendix, where we prove some technical properties of preordered sets that are needed in Section \ref{MinFib}..

\subsection*{Acknowledgments} The authors wish to thank Wojciech Chach\'olski for useful discussions about the paper. They also thank the Institute of Mathematics of the University of Seville and the Department of Mathematics of the Universitat Aut\`onoma de Barcelona for their kind hospitality hosting joint meetings of them.

\section{The model category of $\calc$-diagrams\label{S:Simpli-Prelim}}

The model category of $\calc$-diagrams has been extensively investigated in
\cite{Hirschhorn}.
In this section, we will first recall some basic facts about simplicial sets and will fix some notation (a more complete account of the theory can be found in \cite{Curtis, Goerss-Jardine, May}). Then, we
explain how the category of $\calc$-diagrams inherits the structure of cofibrantly
generated simplicial model category from that of simplicial sets. In the way, we
describe $\calc$-diagrams
as $\Gamma_\calc\op$-sets (see Definition~\ref{D: Categ.Gamma} below),
a point of view that will be useful in the construction of minimal fibrations in Section~\ref{MinFib}.

Let $\Delta$ be the category whose objects are the finite,  totally ordered, non-empty
sets $[n]=\{0,1,...,n\}$, $n\geq 0$, and whose morphisms are
the order-preserving functions. A simplicial set $X$ is a functor $\Delta\op\Right0{}\Sets$, while a simplicial map
is a natural transformation. We will denote by $\mathbf{S}$ the category of simplicial sets and
simplicial maps.

A simplicial set can be seen as a sequence of sets $X= \{X_n\}_{n\geq0} $,
together with structural maps
that consist of \emph{face operators}
$d_i\colon X_n \Right0{}X_{n-1}$, $n\geq1$, $0\leq i\leq n$, and of
\emph{degeneracy operators} $s_i\colon X_n\Right0{} X_{n+1}$, $n\geq0$, $0\leq i\leq n$,
subject to the following relations:
\begin{equation}\label{simprel}
\begin{split}
d_i d_j & = d_{j-1} d_i \,,\qquad  i < j \\
d_i s_j  & = s_{j-1} d_i \,,\qquad  i < j \\
d_j s_j &= 1 = d_{j +1} s_j  \\
d_i s_j &= s_j d_{i-1} \,,\qquad  i > j + 1 \\
s_i s_j &= s_{j+1} s_i \,,\qquad   i\leq j\,.
\end{split}
\end{equation}
Accordingly, a simplicial map $f\colon X\Right0{}Y$ between simplicial sets is a sequence of maps
$f_n\colon X_n\Right0{} Y_n$, for $n\geq 0$ which commutes with face and degeneracy operators.
The set of simplicial maps from $X$ to $Y$ will be denoted by $\Mor_\mathbf{S}(X,Y)$. When no
confusion is possible, a map between simplicial sets is understood to be a simplicial map.

The elements of $X_n$ are called \emph{$n$-simplices}, or just simplices if we do not need to emphasize the dimension $n$.
Simplices in the image of a degeneracy operator are called
\emph{degenerate} simplices. In turn, all the simplices of the form
$d_{i_1}d_{i_2}\dots d_{i_r}(x)$, $r\geq0$ will be called \emph{faces} of $x$. Those of the form $s_{i_1}s_{i_2}\dots s_{i_r}(x)$, $r\geq1$,
are called \emph{degeneracies} of $x$.

The \emph{standard} $n$-simplex $\Delta[n]$ is the simplicial set defined
$\Delta[n]=\Hom_\Delta([-],[n])$.
It contains a fundamental simplex $\imath_n = \Id_{[n]}\in \Delta[n]_n$ and all of simplices are either faces or
degeneracies of $\imath_n$. Actually, Yoneda's lemma provides
a natural isomorphism
$\Mor_{\sSets}(\Delta[n], X) \cong X_n$, assigning to a  simplicial map
$\varphi\colon\Delta[n]\Right0{} X$, the image of the fundamental simplex,
$\varphi(\iota_n)\in X_n$. We call $\varphi$ the \emph{characteristic map} of $x$.
We will often identify a simplex $x\in X_n$ with its
characteristic map $\Delta[n]\Right0{x}X$,  denoted with the same symbol.

Given a face $d_i\imath_n \in \Delta[n]$, the characteristic map is denoted
$d^i \colon \Delta[n-1]\Right0{}\Delta[n]$. Likewise, given a degeneracy $s_i(\imath_n)$, we
have a characteristic map $s^i\colon \Delta[n+1]\Right0{}\Delta[n]$. It turns out that the set of maps
$d^i$ and $s^i$, $0\leq n$, $0\leq i\leq n$, generate all maps $\Delta[m]\Right0{}\Delta[n]$, $n,m\geq0$,
subject to the dual relations \eqref{simprel}. In particular, we can identify $\Delta$ with the full subcategory
of $\sSets$ with objects $\Delta[n]$, $n\geq0$. Then, given an arbitrary simplicial set $X$,
the overcategory $\Delta\dn X$
might be seen as an index category for the simplices of $X$ and
we can express $X$  as colimit of its simplices (cf.~\cite[I.2.1]{Goerss-Jardine})
$$X\cong \colim_{\Delta\dn  X} \Delta[n].$$

We will now describe how these considerations extend to the category of $\calc$-diagrams,
for a fixed small index category $\calc$. Then, we will also explain how
the category of $\calc$-diagrams inherits a
cofibrantly generated simplicial model category structure from that of $\sSets$. We mainly follow
\cite{Hirschhorn} (in particular \cite[11.6.1]{Hirschhorn}) but at the same time we will introduce
the convenient notation for next sections.

Given a  small category $\mathcal{C}$, we denote by $\Cdiag$ the category of $\calc$-diagrams,
namely, the category of functors from $\calc$ to $\sSets$ and natural transformations between them.
Any simplicial set $X$ might be considered as a constant $\calc$-diagram, namely a diagram where any object
of $\calc$ maps to $X$ and any morphism maps to the identity of $X$.

Let $c$ be an object of $\calc$ and $\imath_c \colon \{c\}\to\calc$ the inclusion of the subcategory with a unique object $c$ and a unique morphism $id_c$. Given a
simplicial set $Y$, we will equally denote by $Y$ the functor defined on $\{c\}$ that assigns $Y$ to $c$. Then its \emph{left Kan extension} $\imath_{c,*}Y$ along the inclusion functor is a $\mathcal{C}$-diagram that can be described on objects as
$$
    \imath_{c,*}Y(d) = Y\times \Mor_\calc(c,d)
    \,,\qquad d\in \Ob(\calc)\,,
$$
and for a morphism $f\in\Mor_\calc(a,b)$, the induced map
$\imath_{c,*}Y(f)\colon \imath_{c,*}Y(a) \Right0{}\imath_{c,*}Y(b)$  is defined as
$\imath_{c,*}Y(f)(y,g) = (y,f\circ g)$,  for each simplex $y$ of $Y$ and $g\in\Mor_\calc(c,a)$.
(cf.\ \cite[11.5.25]{Hirschhorn}).
Furthermore, there is an adjunction
\begin{equation}\label{adjuntion1}
\Mor_{\Cdiag}(\imath_{c,*}Y,X)\cong  \Mor_{\sSets}(Y, X(c))\,.
\end{equation}
for each object $c$ of $\calc$.

Recall
that the \emph{boundary of $\Delta[n]$} is defined as the smallest simplicial
subset $\dot{\Delta}[n]$ of $\Delta[n]$ containing the
faces $d_i\imath_n$, $0\leq i\leq n$, while the $k^{th}$-\emph{horn} $\Lambda^k[n]$ ($0\leq k\leq n$,
$n\geq 1$) is the smallest simplicial subset $\Delta[n]$ containing all of the faces $d_i\imath_n$, $i\neq k$.
Now, we extend these definitions to the category of $\Cdiag$.

\begin{Def}\label{N:Deltas} Let $\calc$ be a small category. For each object $c$ of $\calc$ we
set
\begin{align*}
 \delta^c_n  & = \imath_{c,*}\Delta[n]\\
 \dot{\delta}^c_n &  = \imath_{c,*}\dot{\Delta}[n]\\
 \lambda^c_{n,k}  & = \imath_{c,*}\Lambda^{k}[n]
\end{align*}
for $n\geq 0$ and   $0\leq k\leq n$.
\end{Def}

\begin{Def}\label{D: Categ.Gamma} Let $\Gamma_\calc$ be the full subcategory of $\mathbf{S}^{\mathcal{C}}$ with objects $\delta_n^c$, with $c$ in $\calc$
 and all $n\geq 0$. We define the category $\Gamma_\calc\op$-sets as the category of functors from $\Gamma_\calc\op$ to the category of sets, and natural transformation between them.
\end{Def}

By naturality of the left Kan extension, we have morphisms in $\Gamma_\calc$,
\begin{gather*}
      s^i\colon \delta_{n+1}^c    \Right2{}   \delta_{n}^c \\
      d^i\colon \delta_{n-1}^c   \Right2{}   \delta_n^c
\end{gather*}
for all $c\in \Ob(\calc)$, $n\geq0$ and $0\leq i\leq n $, which are in turn induced by the simplicial maps  $s^i\colon \Delta[n+1] \Right0{}  \Delta[n]$ and
 $d^i\colon \Delta[n-1] \Right0{} \Delta[n]$, respectively. For every $f\in\Mor_\calc(a,b)$ and $n\geq0$ there is also a morphism
\begin{equation*}
\delta^f_n\colon \delta_{n}^b \Right2{} \delta_n^a
\end{equation*}
defined  by $(\delta^f_{n})_c(z,h)=(z,h\circ f)$, for any $c\in\Ob(\calc)$,  any simplex $z\in\Delta[n]$ and any morphism $h\in\Mor_\calc(b,c)$.

\begin{Pro} \label{structGamma} All morphisms of $\Gamma_\calc$ are  compositions of the form
$\delta^f_{m}d^{k_r}\dots d^{k_1}s^{j_1}\dots s^{j_t}$.

\end{Pro}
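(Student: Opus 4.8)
The plan is to read off every morphism of $\Gamma_\calc$ from the adjunction \eqref{adjuntion1} and then to split it into a simplicial part and a change-of-object part. First I would compute the hom-sets. For objects $\delta^c_n$ and $\delta^d_m$, the adjunction \eqref{adjuntion1} together with the formula $(\imath_{d,*}\Delta[m])(c)=\Delta[m]\times\Mor_\calc(d,c)$ and Yoneda's lemma yield
\[
\Mor_{\Gamma_\calc}(\delta^c_n,\delta^d_m)\;\cong\;\Mor_{\sSets}\bigl(\Delta[n],\Delta[m]\times\Mor_\calc(d,c)\bigr)\;\cong\;\Hom_\Delta([n],[m])\times\Mor_\calc(d,c),
\]
where in the last isomorphism one uses that $\Delta[m]_n=\Hom_\Delta([n],[m])$ and that any simplicial map from $\Delta[n]$ to the discrete simplicial set $\Mor_\calc(d,c)$ is constant, since every simplex of $\Delta[n]$ is a face or a degeneracy of the fundamental simplex $\imath_n$. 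Thus a morphism $\delta^c_n\to\delta^d_m$ is precisely a pair $(\alpha,f)$ consisting of a simplicial map $\alpha\colon\Delta[n]\to\Delta[m]$ and a morphism $f\in\Mor_\calc(d,c)$.

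Next I would identify the morphism of $\Gamma_\calc$ attached to such a pair. Unwinding \eqref{adjuntion1}, the natural transformation adjoint to $(\alpha,f)$ has, at an object $e$ of $\calc$, the component $\Delta[n]\times\Mor_\calc(c,e)\to\Delta[m]\times\Mor_\calc(d,e)$ given by $(x,h)\mapsto(\alpha(x),h\circ f)$. On the other hand, the morphism $\imath_{c,*}(\alpha)\colon\delta^c_n\to\delta^c_m$ induced by $\alpha$ has component $(x,h)\mapsto(\alpha(x),h)$ at $e$, while $\delta^f_m\colon\delta^c_m\to\delta^d_m$ has component $(z,h)\mapsto(z,h\circ f)$; composing them gives exactly $(x,h)\mapsto(\alpha(x),h\circ f)$. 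Hence the morphism attached to $(\alpha,f)$ is $\delta^f_m\circ\imath_{c,*}(\alpha)$, so every morphism of $\Gamma_\calc$ has this form.

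Finally I would decompose the simplicial part. By the epi--mono factorization of morphisms in $\Delta$ (equivalently, the dual of the relations \eqref{simprel} for maps between standard simplices), $\alpha$ factors as a composite of codegeneracy maps $s^{j}$ followed by a composite of coface maps $d^{k}$, say $\alpha=d^{k_r}\cdots d^{k_1}s^{j_1}\cdots s^{j_t}$ in $\sSets$. Since $\imath_{c,*}$ is a functor and, by construction, sends the simplicial maps $d^i$ and $s^i$ to the morphisms $d^i$ and $s^i$ of $\Gamma_\calc$, it follows that $\imath_{c,*}(\alpha)=d^{k_r}\cdots d^{k_1}s^{j_1}\cdots s^{j_t}$ in $\Gamma_\calc$; therefore the morphism attached to $(\alpha,f)$ equals $\delta^f_m d^{k_r}\cdots d^{k_1}s^{j_1}\cdots s^{j_t}$, as claimed.

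I do not anticipate a genuine obstacle: the argument is a bookkeeping exercise with the adjunction \eqref{adjuntion1} combined with the classical normal form for morphisms of $\Delta$. The one point worth keeping in mind is that the factors $\delta^f_m$ are really necessary even when source and target carry the same object of $\calc$, because $\imath_{c,*}$ restricted to the standard simplices fails to be full onto the full subcategory of $\Gamma_\calc$ spanned by the $\delta^c_\bullet$ whenever $c$ admits non-identity endomorphisms.
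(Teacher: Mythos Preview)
Your proposal is correct and follows essentially the same route as the paper: both use the adjunction \eqref{adjuntion1} to identify a morphism $\delta^c_n\to\delta^d_m$ with a pair $(\alpha,f)\in\Hom_\Delta([n],[m])\times\Mor_\calc(d,c)$ (the paper phrases this as the adjoint map landing in a single component of $\coprod_{\Mor_\calc(d,c)}\Delta[m]$ by connectedness of $\Delta[n]$), then factor the simplicial part via the standard epi--mono decomposition in $\Delta$ and take adjoints back. Your explicit verification that the pair $(\alpha,f)$ corresponds to $\delta^f_m\circ\imath_{c,*}(\alpha)$ is a bit more detailed than the paper's treatment, but the argument is the same.
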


\begin{proof}
Take an arbitrary morphism
$\eta\colon \delta^c_n\Right0{}\delta^d_m$ of $\Gamma_\calc$.
The adjoint map is a map of simplicial sets
$$
\5\eta\colon \Delta[n]\Right0{} (\delta^d_m)_c = \Delta[m]\times \Hom_\calc(d,c) =
       \coprod_{\Hom_\calc(d,c)}\Delta[m]\,.
$$
Since the simplex $\Delta[n]$ is connected, the image $\5\eta(\Delta[n])$ is contained in one
of the components of $(\delta^d_m)_c $. Hence, we can factor $\5\eta$ as
a map $\varphi\colon \Delta[n]\Right0{}\Delta[m]$ followed by the inclusion
$\imath_f\colon \Delta[m] \Right0{} \Delta[m]\times \Hom_\calc(d,c) $ defined by $\imath_f(z)=(z,f)$.
It is known that every map $\Delta[n]\Right0{}\Delta[m]$ is a composition of face and degeneracy maps
$\varphi = d^{k_r}\dots d^{k_1}s^{j_1}\dots s^{j_t}$. So, $\5\eta$ is the composition shown in the left hand diagram
$$
\xymatrix{\Delta[n]\ar[d]_{d^{k_r}\dots d^{k_1}s^{j_1}\dots s^{j_t}}  \ar[rd]^{\5\eta} & \\
  \Delta[m]   \ar[r]^{\imath_f}   &  (\delta^d_m)_c
}  \qquad  \qquad
\xymatrix{\delta_n^c\ar[d]_{d^{k_r}\dots d^{k_1}s^{j_1}\dots s^{j_t}}  \ar[rd]^{\eta} & \\
  \delta_m^c   \ar[r]^{\delta_m^f }   &  \delta^d_m
}
$$
Taking adjoints we obtain the right hand diagram, and thus
$\eta=\delta_m^f d^{k_r}\dots d^{k_1}s^{j_1}\dots s^{j_t}$.
\end{proof}

According to Proposition~\ref{structGamma}, a $\Gamma_\calc\op$-set $X$ can be described as a
family of sets $\{ X_{c,n}\}$ indexed by objects $c$ of $\calc$ and
natural numbers $n\in \N$ together with structural maps
\begin{align*}
& d_i \colon X_{c,n} \Right2{}  X_{c,n-1}\,, & & 0\leq i\leq n\,, \ n\geq1\,, \  c\in \Ob(\calc)  \\
& s_i  \colon X_{c,n} \Right2{}  X_{c,n+1}    &   & 0\leq i\leq n\,, \ n\geq0\,, \   c\in \Ob(\calc)  \\
& f \colon X_{c,n} \Right2{}  X_{d,n}   & &  f\in \Mor_\calc(c,d)\,, \    n\geq0\,,
\end{align*}
satisfying the simplicial relations \eqref{simprel} and the naturality relations
\begin{equation}\label{Nat-cond}
    \text{$f\circ d_i =d_i\circ f$, and $f\circ s_i = s_i\circ f$, for all $f$ and  $i$.}
\end{equation}

\begin{Pro}
Given a small category $\calc$, there is a natural isomorphism of categories
$$
    \Gamma_\calc\op\textup{-sets} \Right1{\cong} \sSets^\calc\,.
$$
\end{Pro}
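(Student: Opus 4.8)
The plan is to construct the isomorphism as a restricted‐Yoneda / nerve-type functor and show it is essentially surjective and fully faithful, using the combinatorial description of $\Gamma_\calc$ already established. First I would define the functor $N\colon\sSets^\calc\to\Gamma_\calc\op\textrm{-sets}$ by sending a $\calc$-diagram $X$ to the functor $\delta\mapsto\Mor_{\Cdiag}(\delta,X)$; concretely, using the adjunction \eqref{adjuntion1} this gives $N(X)_{c,n}=\Mor_{\Cdiag}(\delta^c_n,X)\cong\Mor_{\sSets}(\Delta[n],X(c))\cong X(c)_n$, so that $N(X)_{c,n}=X(c)_n$ naturally. The structural maps of $N(X)$ as a $\Gamma_\calc\op$-set are precomposition with the morphisms $s^i$, $d^i$, and $\delta^f_n$ of $\Gamma_\calc$; under the identification $N(X)_{c,n}=X(c)_n$ these become, respectively, the degeneracy and face operators of the simplicial set $X(c)$ and the map $X(f)\colon X(c)_n\to X(d)_n$ induced by applying the diagram $X$ to $f$. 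The simplicial relations \eqref{simprel} hold because they hold in each $X(c)$, and the naturality relations \eqref{Nat-cond} hold because $X(f)$ is a simplicial map; thus $N$ is well-defined, and it is visibly functorial in $X$.

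Next I would construct a functor in the other direction. Given a $\Gamma_\calc\op$-set $Y=\{Y_{c,n}\}$ with its structural maps, define a $\calc$-diagram $MY$ by $(MY)(c)_n=Y_{c,n}$ with faces and degeneracies the given $d_i,s_i$: the relations \eqref{simprel} make each $(MY)(c)$ a simplicial set, and for $f\in\Mor_\calc(c,d)$ the given map $f\colon Y_{c,n}\to Y_{d,n}$ is a simplicial map by \eqref{Nat-cond}, so $(MY)(f)$ is defined; functoriality $(MY)(g\circ f)=(MY)(g)\circ(MY)(f)$ and $(MY)(id_c)=id$ follow from functoriality of $Y$ on $\Gamma_\calc\op$ together with the identities $\delta^{g\circ f}_n=\delta^f_n\circ\delta^g_n$ and $\delta^{id_c}_n=id$, which are immediate from the defining formula $(\delta^f_n)_c(z,h)=(z,h\circ f)$. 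On morphisms, a map of $\Gamma_\calc\op$-sets is by definition a natural transformation, which amounts to a family $Y_{c,n}\to Y'_{c,n}$ commuting with all $d_i$, $s_i$ and all $f$; this is exactly a map of $\calc$-diagrams $MY\to MY'$. Hence $M$ is a functor.

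Finally I would check that $N$ and $M$ are mutually inverse. On objects, $MN(X)(c)_n=N(X)_{c,n}=X(c)_n$ with the same structure maps, so $MN(X)=X$ on the nose; symmetrically $NM(Y)=Y$. On morphisms both compositions are the identity by the same bookkeeping. The only point requiring a little care — and the place I expect the main (minor) obstacle — is checking that \emph{every} morphism of $\Gamma_\calc\op$-sets arises this way and that the structural-map correspondence is exhaustive: this is precisely where Proposition~\ref{structGamma} is used, since it guarantees that a natural transformation on $\Gamma_\calc\op$ is determined by, and imposes exactly, its compatibility with the generating morphisms $s^i$, $d^i$, $\delta^f_n$, and no further relations beyond \eqref{simprel} and \eqref{Nat-cond} are hidden in $\Gamma_\calc$. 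Once that is in place, naturality of the whole construction in $X$ (equivalently, the fact that $N$ is defined by a hom-functor) makes the isomorphism of categories natural, completing the proof.
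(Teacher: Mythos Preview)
Your proposal is correct and follows essentially the same approach as the paper: your functor $N$ is the paper's ``inverse functor'' $Y\mapsto\3Y$ (restricted Yoneda), and your functor $M$ is the paper's forward functor $X\mapsto\5X$, just presented in the opposite order and with more detail about verifying the mutual inverse property. The paper's proof is terser and leaves the role of Proposition~\ref{structGamma} implicit (it is used in the paragraph preceding the proposition to justify the description of $\Gamma_\calc\op$-sets via the structural maps $d_i$, $s_i$, $f$), whereas you make this dependence explicit.
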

\begin{proof}
A $\Gamma_\calc\op$-set $X$ is mapped to a $\calc$-diagram $\5X$ where  $\5X(c)$ is the
simplicial set with
    $n$-simplices $X_{c,n}$,
     face maps  $d_i \colon X_{c,n} \Right0{}  X_{c,n-1}$, $1\leq n$, $0\leq i\leq n$
     and
     degeneracy maps $s_i  \colon X_{c,n} \Right0{}  X_{c,n+1}$, $0\leq n$, $0\leq i\leq n$.
We assign the map $\5X(f)\colon \5X(a)\Right0{} \5X(b) $
induced by the structural maps of $X$, $\delta_n^f$, $n\geq0$,  to every morphism
$f\in \Mor_\calc(a,b)$.

The inverse functor assigns to a $\calc$-diagram  $Y$ the $\Gamma_\calc\op$-set $\3Y$
given by $\3Y(\delta_n^c)= \Mor_{\sSets^\calc}(\delta_n^c, Y) \cong
            \Hom_{\sSets}(\Delta[n],Y(c)) \cong Y(c)_n$,
 with structural maps induced by those of each simplicial set $Y(c)$, $c\in \Ob(\calc)$, and
by $\delta_n^f\colon \delta_n^b\Right0{}\delta_n^a$, for each $n\geq 0$,  and each
$f\in\Mor(\calc)$.
\end{proof}

We will use this result as an identification of the categories
$\sSets^\calc$ and $\Gamma_\calc\op$-sets. In particular, by a $\calc$-diagram of simplicial sets we mean an object of
either of these categories, and we will choose the most useful description in each situation.

When a $\calc$-diagram $X$ is viewed as a $\Gamma_\calc\op$-set, every element of every set
$X_{c,n}= X(\delta_n^c)$ is called a simplex, and identifying
$X_{c,n}\cong \Mor_{\textup{$\Gamma_\calc\op$-sets}}(\delta_n^c, X)$, we equally
denote with the same letter a simplex $x\in X_{c,n}$ and its characteristic map
$\delta_n^c\Right0{x} X$, as we do for simplicial sets.
In this case we can also decompose a $\calc$-diagram $X$ as colimit of  its simplices
$$X\cong\colim_{\Gamma_\calc\dn  X} \delta_n^c\,.$$

\medskip

We will now explain the basic concepts and definitions that endow the category of $\calc$-diagrams with the structure of
a cofibrantly generated simplicial model category,
extending that of $\sSets$.

\begin{Def}
 A map of $\calc$-diagrams $p\colon X\Right0{} B$ is a \emph{fibration} if for every
diagram
\begin{equation*}
\xymatrix{ \lambda^c_{n,k}\ar[r]\ar@{^(->}[d]_i    &  X\ar[d]^p  \\
             \delta^c_n\ar[r]\ar@{.>}[ur]^{\theta}  &  B   })
\end{equation*}
where the solid arrows commute,
there is a map $\theta$, the dotted arrow,  making both triangles commutative.

A $\calc$-diagram $X$ is fibrant provided the unique map
$X\Right0{}*$ from $X$ to the constant diagram with value a point is a fibration.
\end{Def}
This reduces to the classical definition of Kan fibration when the index category $\calc$ is the trivial category with one object and one morphism.
It clearly holds that a map of diagrams $X\Right0{} B$ is a fibration if it restricts to a
Kan fibration $X(c)\Right0{}B(c)$ of every object $c\in \Ob(\calc)$.

\begin{Def}
We define \emph{weak equivalences of diagrams} as those maps $X\Right0{}Y$ that restrict to a
weak equivalence of simplicial sets $X(c)\Right0{}Y(c)$ for each  $c\in \Ob(\calc)$.
\end{Def}

\begin{Def}[{\cite[2.4]{Dwyer}}]\label{P:Basis} A map $f\colon X\Right0{} Y$ of $\mathbf{S}^{\calc}$ is called \emph{free}, if for every object $c$ of $\calc$ and $n\geq 0$, the map
$f_{c,n}\colon X_{c,n}\Right0{} Y_{c,n}$ is injective and if there exists a set
$\Sigma(f)$ of simplices of $Y$ such that
\begin{enumerate}[\rm (i)]
\item no simplex of $\Sigma(f)$ is in the image of $f$.
\item $\Sigma(f)$ is closed under degeneracy operators, and
\item for every object $c$ of $\calc$, $n\geq0$, and every simplex $y\in Y_{c,n}$ which
is not in the image of $f_{c,n}$, there exists a unique simplex $b\in\Sigma(f)$ and a unique map
$h\in\Mor(\calc)$ such that $\delta_n^h(b)=y$.
\end{enumerate}
We will say that a  $\calc$-diagram $X$ is \emph{free} if the map $\emptyset\Right0{} X$ from the empty diagram
is free.
\end{Def}

According to this definition a $\calc$-diagram $X$ is \emph{free} if there is a set $\Sigma=\Sigma(X)$
of simplices of $X$ which is closed under degeneracies and generates $X$ freely; namely, for each simplex $x\in X$, there is a unique $w\in \Sigma(X)$ and a unique morphism $h$ in $\calc$, such that
$\delta_n^h(w)=x$.
The set $\Sigma$  is called a \emph{basis} for $X$,
and an element of $\Sigma$ is called a \emph{generator}. We will call
 $\Sigma_n$ the subset of $\Sigma$ consisting of all $n$-simplices of $X$ that belong to
$\Sigma_n = \Sigma\cap X_n$.
Similarly, $\Sigma_c= \Sigma\cap X_c$
and $\Sigma_{c,n} = \Sigma\cap X_{c,n}$, for $n\geq0$ and $c\in\Ob(\calc)$.

In the case of a free map $f\colon X\Right0{}Y$, we will also call a basis for $f$ the set
$\Sigma(f)$ over which $f$ is free according to Definition \ref{P:Basis}.

\begin{Ej}\label{attach1} Let $X$ be a $\calc$-diagram. For any $n\geq 0$, $c\in \calc$,
and any map $\varphi$ in $\Mor_{\Cdiag}(\dot\delta_n^c,X)$, the map $\imath\colon X\Right0{} Y$
defined by the  push-out diagram
\begin{equation}\label{pushout}
  \xymatrix{\dot\delta_n^c  \ar[r]^{\incl}  \ar[d]_\theta &   \delta_n^c  \ar[d]^y \\
   X \ar[r]^{\imath} & Y }
\end{equation}
is a free map with
$\Sigma(\imath) =\{y\} \cup\{\text{iterated degeneracies of $y$}\}$,
where $y$ is the $n$-simplex of $Y$ classified by the map
$y\colon \delta_n^c\Right0{} Y$.
Moreover, if $X$ is free with basis $\Sigma(X)$, then $Y$
is also a free $\calc$-diagram with $\Sigma (Y) = \Sigma(X) \cup \Sigma(\imath)$
\end{Ej}

Actually, all free maps and free $\calc$-diagrams can be obtained by a possibly transfinite iteration of the above example.
A free map between $\calc$-diagrams, $\omega\colon X\Right0{}Y$ with base $\Sigma$, can easily be seen
to be the composition of a sequence
$$ X \Right2{} Y_0 \Right2{} Y_1 \Right2{} \dots \Right2{} Y_i \Right2{} Y_{i+1} \Right2{} \dots \Right2{}Y =\colim_iY_i
$$
where each step $Y_i\Right0{}Y_{i+1}$ is produced by a pushout diagram that generalizes
Example~\ref{attach1}
from attaching a single simplex to the case of a collection of simplices of dimension $i+1$.

Conversely, any transfinite iteration of the construction of Example~\ref{attach1} is a free map.
Actually, a standard transfinite induction argument shows that given an ordinal $\lambda$,
and a $\lambda$-sequence of maps of $\calc$-diagrams
$$
   X_0 \Right2{\varphi_1} X_1 \Right2{\varphi_2} X_2 \Right2{} \dots \Right2{\varphi_\beta}  X_\beta \Right2{} \dots
$$
with $\beta<\lambda$, where
 any of the maps $X_\beta \Right0{} X_{\beta+1} $, $\beta+1<\lambda$,
 is a free map  and
for any limit ordinal $\gamma<\lambda$, the induced map
$\colim_{\beta<\gamma} X_\beta \Right0{} X_\gamma$ is an isomorphism,
we can choose basis $\Sigma_\beta$ for the partial compositions $\omega_\beta\colon X_0 \Right0{} X_\beta$,
such that $\omega_{\alpha,\beta}(\Sigma_\alpha)\subseteq \Sigma_\beta$ if $\alpha<\beta$
and  the composition
 $$X_0\Right0{}\colim_{\beta<\lambda} X_\beta $$
is a free map with basis $\Sigma_\lambda =\colim_{\beta<\lambda}\Sigma_\beta$.

This shows in particular that in the category of $\calc$-diagrams the notion of
free maps and free $\calc$-diagrams of \cite{Dwyer}, as stated in
Definition~\ref{P:Basis}, coincides with that of  relative free cell complex
and free cell complex in \cite[11.5.35]{Hirschhorn}.
This justifies the next definition (cf.~\cite[2.4]{Dwyer}, \cite[11.6.1]{Hirschhorn}).

\begin{Def}
A map of $\calc$-diagrams is called a \emph{cofibration} if it is a retract of a free map.
Likewise, a $\calc$-diagram is \emph{cofibrant} if it is a retract of a free diagram.
\end{Def}

If the index category is the trivial category with one object and one morphism,
then the concept of cofibration reduces to the classical notion for simplicial sets,
where a map is a cofibration if it is injective.
If we have a category with two objects $a$ and $b$ and a unique non-identity morphism
$f\colon a\Right0{}b$, then a $\calc$-diagram  $X$ is free if and only if the map $X(f)$ is injective.

Given a discrete group $G$, a simplicial set $X$ with an action of $G$ can be seen as a
diagram defined over the category $\calb G$ with one object and the elements of $G$ as morphisms.
This is a free diagram if and only if $G$ acts freely on $X$.

\medskip

For an arbitrary small category $\calc$, if $Y$ is a simplicial set and $c$ is an object
of $\calc$, the $\calc$-diagram $\imath_{c,*}Y$ is a free diagram on  generators
 $\Sigma= Y\times\{\Id_c\}\subseteq  \imath_{c,*}Y(c) = Y\times \End_\calc(c)$.
 Likewise, if $f\colon A\Right0{}B$ is an inclusion of simplicial sets, then the
 induced map $\imath_{c,*}f\colon \imath_{c,*}A\Right0{}\imath_{c,*}B$ is a free map. This applies in the next definition:

\begin{Def}
Let $\calc$ be a small category. The \emph{generating cofibrations} of $\Cdiag$ are the maps
$$
    \dot{\delta}^c_n\Right2{}\delta^c_n\,,\qquad \text{for all $c\in\Ob(\calc)$, $n\geq 0,$}
$$
induced by the inclusions $\dot\Delta[n]\hookrightarrow \Delta[n] $.

The \emph{generating trivial cofibrations} are the maps
 $$\lambda^c_{n,k}\hookrightarrow \delta^c_n\,,\qquad \text{for all $c\in\Ob(\calc)$, $n>0$, and
$0\leq k\leq n$,}
$$
induced by the inclusions $\Lambda^k[n]\hookrightarrow \Delta[n]$.
\end{Def}

The category of $\calc$-diagrams also inherits a simplicial model category structure
from the category of simplicial sets, with structure natural functors, external product, function complex and exponent functor.
We will briefly recall these constructions.
The \emph{external product} with simplicial sets is defined
$$\times \colon \Gsets\times \sSets \Right2{} \Gsets$$
defined by
$(X\times K)_{c,n} =X_{c,n}\times K_n$, and with structural maps induced by those of $X$ and $K$.
Notice that we can similarly define a \emph{left product} with simplicial sets. We use both form without further explanation.

\begin{Def}
\label{functcomp}
The \emph{function complex}
$$\map_\calc \colon \Gsets\times  \Gsets\Right2{} \sSets\,,$$
is defined, for two $\calc$-diagrams $X$ and $Y$,
 as the simplicial set   $\map_\calc(X,Y)$ with
$n$-simplices $\Mor_{\Cdiag}(X \times \Delta[n], Y)$ or equivalently,
the commutative diagrams in $\Cdiag$
$$\xymatrix{ X\times \Delta[n]\ar[rr]^{\widetilde{\varepsilon}} \ar[dr]_{pr} &&
                 Y\times \Delta[n]\ar[dl]^{pr}  \\                                         & \Delta[n]   \rlap{ \,.} }$$
\end{Def}

Now, if $g\in\Mor_\mathbf{S}(\Delta[n]\times X, Y)$, its faces $d_ig$ and degeneracies $s_ig$ are given by the compositions
\begin{gather*}
\xymatrix{ \Delta[n-1]\times X\ar[r]^(0.55){d^i\times 1} & \Delta[n]\times X\ar[r]^(0.6)g  &  Y }
\qquad\text{and}\qquad
\xymatrix{ \Delta[n+1]\times X\ar[r]^(0.55){s^i\times 1} & \Delta[n]\times X\ar[r]^(0.6)g
&  Y. }
\end{gather*}
Finally, there is an \emph{exponent} functor
and
$$^\wedge \colon \sSets\times  \Gsets\Right2{} \sSets\,,$$
with $X^K= \map_\calc(K, X)$, where
$K$ is the constant $\calc$-diagram.

With the above definitions, the basic theory of simplicial sets extends to $\calc$-diagrams.
Indeed, $\Cdiag$ becomes a cofibrantly generated simplicial model category
\cite[11.6.1, 11.7.3]{Hirschhorn}.
In the next definition we collect the axioms for later reference.

\begin{Def}[{\cite[7.1.3, 9.1.6, and 11.1.2]{Hirschhorn}}]
\label{D:Model-Category}
Let $\mathcal{M}$ be a category equipped with three classes of morphisms called
\emph{fibrations}, \emph{cofibrations}, and \emph{weak equivalences}.
A fibration (respectively\ cofibration) which is also a weak equivalence is
called a \emph{trivial fibration} (respectively \emph{trivial cofibration}).
Then, $\mathcal{M}$ is a \emph{model category} \cite{Quillen} if it satisfies the following axioms:

\begin{enumerate}[\bf M1:]
\item The category $\mathcal{M}$ is closed under small limits and colimits.
\item Given composable maps $\xymatrix{X\ar[r]^g & Y\ar[r]^f & Z}$ in $\mathcal{M}$, if any two of $f$, $g$ and $f\circ g$ are weak equivalences, then so is the third.
\item  If $f$ is a retract of $g$ and $g$ is a weak equivalence, fibration or cofibration, then so is $f$.
\item Suppose that we are given a commutative solid arrow diagram
$$
 \xymatrix{ U \ar[r] \ar[d]_i& X\ar[d]^p \\
       V \ar@{.>}[ur] \ar[r]  &  Y
 }
 $$
where $i$ is a cofibration and $p$ is a fibration. If either $i$ is a trivial cofibration or $p$ is a
trivial fibration, then the dotted arrow exists and makes the diagram commutative.
\item Every map $f$ of $\mathcal{M}$ has two functorial factorizations:
\begin{enumerate}[\rm (a)]
\item  $f = p \circ  i$ where $p$ is a fibration and $i$ is a trivial cofibration, and
\item  $f = q \circ j$ where $q$ is a trivial fibration and $j$ is a cofibration.
\end{enumerate}
\end{enumerate}
A model category $\calm$ is said to be \emph{cofibrantly generated} if
\begin{enumerate}[\rm (1)]
  \item There exists a set $I$ of maps (called a set of \emph{generating cofibrations}) that permits the small object argument (\cite[10.5.15]{Hirschhorn}) and such that a
  map is a trivial fibration if and only if it has the $RLP$ with respect to every element of $I$.

\smallskip

  \item There exists a set $J$ of maps (called a set of \emph{trivial generating cofibrations})
that permits the small object argument and such that a map is a
fibration if and only if it has the $RLP$ with respect to every element of $J$.
\end{enumerate}

If $\calm$ is a model category, then it is a simplicial model category if it is enriched
over simplicial sets, and
the following two axioms hold:
\begin{enumerate}[\bf M1:]\setcounter{enumi}{5}
  \item For every two objects $X$ and $Y$ of $\mathcal{M}$ and every simplicial set $K$ there exist objects $X\otimes K$ and $Y^K$ of $\mathcal{M}$
       such that there are isomorphisms of simplicial sets
             $$\hom(X\otimes K,Y)\cong \hom(K,\hom(X,Y))\cong \hom(X,Y^K)$$
      that are natural in $X,Y$ and $K$.
  \item\label{axM7} If $i\colon A\Right0{} B$ is a cofibration in $\mathcal{M}$ and $p\colon X\Right0{} Y$ is a fibration in $\mathcal{M}$, then the map of simplicial
    sets
       $$(i^{*}, p_{*})\colon \hom(B,X)\Right0{} \hom(A,X)\times_{\hom(A,Y)}\hom(B,Y)$$
is a fibration that is a trivial fibration if either $i$
or $p$ is a weak equivalence.
\end{enumerate}
\end{Def}

\begin{Nota}\label{R:Quillen-SOA}
From the cofibrantly generated model structure for $\mathbf{S}^{\calc}$ and Quillen Small Object Argument (\cite{Quillen},\cite[10.5.16]{Hirschhorn}), we obtain that any map
$\emptyset\Right0{} X$ in $\mathbf{S}^{\calc}$ admits a functorial factorization
$\xymatrix{ \emptyset\ar@{^(->}[r] & QX\ar@{->>}[r]^{\sim} &  X }$, where $QX$ is free and the first map is a cofibration while the second map is a trivial fibration
\cite[10.5.2, 11.2.1-1]{Hirschhorn}.
\end{Nota}

The following consequence of the axioms will be useful in later sections.
The proof can be seen in \cite[9.3.8]{Hirschhorn}.

\begin{Pro}\label{cof} Let $\calm$ be a simplicial model category.
It $i \colon A\Right0{} B$ is a cofibration, and $K\subseteq L$ simplicial sets, the induced map
$$ A\otimes L \coprod_{A\otimes K} B\otimes K \Right2{} B\otimes L$$
is also a cofibration and it is a trivial cofibration if either  $i \colon A\Right0{} B$ is a trivial cofibration or
the inclusion $K\subseteq L$ is a weak equivalence.
\end{Pro}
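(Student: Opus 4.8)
The plan is to reduce the statement to the corresponding fact in the category of simplicial sets, exploiting axiom \textbf{M7} together with the adjunction from \textbf{M6}. First I would recall the standard categorical formalism: for a simplicial model category $\calm$, an inclusion $K\subseteq L$ of simplicial sets, and a cofibration $i\colon A\Right0{}B$, the ``pushout–product'' map
$$
    j\colon A\otimes L \coprod_{A\otimes K} B\otimes K \Right2{} B\otimes L
$$
is itself a map in $\calm$, since $\calm$ is closed under small colimits by \textbf{M1}. To prove that $j$ is a cofibration, it suffices, by the usual lifting-property characterization, to show that $j$ has the left lifting property with respect to every (trivial) fibration $p\colon X\Right0{}Y$ of $\calm$; and to prove it is a trivial cofibration under the extra hypotheses, one checks the LLP against all fibrations.

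The key step is the adjunction manipulation. Given a fibration $p\colon X\Right0{}Y$ in $\calm$, a lifting problem for $j$ against $p$ is, by the isomorphisms of \textbf{M6}, naturally equivalent to a lifting problem in simplicial sets for the inclusion $K\subseteq L$ against the map
$$
    (i^{*},p_{*})\colon \hom(B,X)\Right2{} \hom(A,X)\times_{\hom(A,Y)}\hom(B,Y).
$$
Concretely, one transposes: a commutative square with $j$ on the left and $p$ on the right corresponds, after currying the $\otimes$'s into $\hom$'s, to a commutative square with $K\subseteq L$ on the left and $(i^{*},p_{*})$ on the right. By axiom \textbf{M7}, since $i$ is a cofibration and $p$ is a fibration, $(i^{*},p_{*})$ is a fibration of simplicial sets, and it is a \emph{trivial} fibration as soon as either $i$ or $p$ is a weak equivalence. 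Now invoke the classical fact in $\sSets$: the inclusion $K\subseteq L$, being a cofibration (injection) of simplicial sets, has the LLP with respect to every trivial fibration; and if moreover $K\subseteq L$ is a weak equivalence it is a trivial cofibration and has the LLP with respect to every fibration. Combining these, one extracts the desired lift for the original square in $\calm$ by transposing back along the adjunction.

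Assembling the cases: if $p$ is a trivial fibration of $\calm$, then $(i^{*},p_{*})$ is a trivial fibration of $\sSets$, so the cofibration $K\subseteq L$ lifts against it, hence $j$ lifts against $p$; as $p$ was an arbitrary trivial fibration, $j$ is a cofibration. If in addition $i$ is a trivial cofibration, then for \emph{any} fibration $p$ the map $(i^{*},p_{*})$ is already a trivial fibration, so the same argument gives $j$ the LLP against all fibrations, whence $j$ is a trivial cofibration. Symmetrically, if instead $K\subseteq L$ is a weak equivalence, then for any fibration $p$ the map $(i^{*},p_{*})$ is a fibration of $\sSets$ and $K\subseteq L$ is a trivial cofibration, so again $j$ lifts against all fibrations and is a trivial cofibration. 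I expect the only delicate point to be bookkeeping the adjunction isomorphisms and the pushout in \textbf{M6} carefully enough to see that a lifting square for $j$ against $p$ transposes \emph{exactly} to a lifting square for $K\subseteq L$ against $(i^{*},p_{*})$; once that identification is set up, everything else is a formal consequence of \textbf{M4}, \textbf{M7}, and the model structure on $\sSets$. (This is precisely the content of \cite[9.3.8]{Hirschhorn}, to which one may ultimately appeal.)
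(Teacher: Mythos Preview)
Your proposal is correct and in fact gives more detail than the paper, which simply refers to \cite[9.3.8]{Hirschhorn} without further argument. The adjunction/lifting argument you sketch is precisely the standard proof of that result, so your approach coincides with the cited one.
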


\begin{Cor}\label{L:Lifting-Property} The map $\dot{\delta}^c_{n}\times\Delta[m]\cup\delta^c_{n}\times\Lambda^k[m]\hookrightarrow  \delta^c_{n}\times\Delta[m]$ is a
trivial cofibration.
\end{Cor}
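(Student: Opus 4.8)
The plan is to derive this directly from Proposition~\ref{cof} by making the correct choice of cofibration $i$ and of the simplicial-set inclusion $K \subseteq L$. Take $i$ to be the generating cofibration $\dot{\delta}^c_n \hookrightarrow \delta^c_n$ in $\mathbf{S}^{\calc}$, which is a cofibration (indeed a free map, by the discussion following Definition~\ref{P:Basis}), and take the inclusion of simplicial sets to be the horn inclusion $\Lambda^k[m] \hookrightarrow \Delta[m]$, which is a trivial cofibration in $\sSets$ and in particular a weak equivalence. Here the monoidal action $\otimes$ of $\sSets$ on $\mathbf{S}^{\calc}$ from axiom \textbf{M6} is the external product: $\delta^c_n \otimes K = \delta^c_n \times K$ for a simplicial set $K$, since the simplicial structure on $\mathbf{S}^{\calc}$ is the one recalled above Definition~\ref{functcomp}.

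With these choices, Proposition~\ref{cof} applied to the pair $i \colon \dot{\delta}^c_n \hookrightarrow \delta^c_n$ and $\Lambda^k[m] \subseteq \Delta[m]$ says that the induced map
$$
\dot{\delta}^c_n \times \Delta[m] \;\coprod_{\dot{\delta}^c_n \times \Lambda^k[m]}\; \delta^c_n \times \Lambda^k[m] \;\Right2{}\; \delta^c_n \times \Delta[m]
$$
is a cofibration, and is a trivial cofibration because the inclusion $\Lambda^k[m] \subseteq \Delta[m]$ is a weak equivalence (this is exactly the ``either'' clause of Proposition~\ref{cof}). It remains only to identify the source of this map with $\dot{\delta}^c_n \times \Delta[m] \cup \delta^c_n \times \Lambda^k[m]$, viewed as a simplicial subdiagram of $\delta^c_n \times \Delta[m]$; that is, the pushout along the inclusions into $\delta^c_n \times \Delta[m]$ really is the union of the two subdiagrams inside $\delta^c_n \times \Delta[m]$. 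Since colimits in $\mathbf{S}^{\calc}$ are computed objectwise and, on each object $d \in \Ob(\calc)$, the external product is just the levelwise product of simplicial sets, this reduces to the corresponding statement for simplicial sets: the pushout $A \times L \coprod_{A \times K} B \times K$ over subobjects $A \subseteq B$, $K \subseteq L$ is the subobject $A \times L \cup B \times K \subseteq B \times L$, which holds because $A \times L$ and $B \times K$ intersect in $A \times K$.

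There is no real obstacle here; the statement is a formal specialization of Proposition~\ref{cof}. The only point requiring a word of care is the identification of the pushout with the union $\dot{\delta}^c_n \times \Delta[m] \cup \delta^c_n \times \Lambda^k[m]$, which is exactly why the corollary is phrased with a ``$\cup$'' rather than a ``$\coprod$''; as noted above, this identification is checked objectwise and follows from the monomorphism $A \times K \hookrightarrow A \times L$ together with $A \times L \cap B \times K = A \times K$ in $B \times L$.
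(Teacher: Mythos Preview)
Your proposal is correct and is exactly the argument the paper intends: the corollary is stated immediately after Proposition~\ref{cof} with no separate proof, so it is meant to be the direct specialization you describe, taking $i$ to be the generating cofibration $\dot{\delta}^c_n \hookrightarrow \delta^c_n$ and $K \subseteq L$ to be the horn inclusion $\Lambda^k[m] \hookrightarrow \Delta[m]$. Your additional remark identifying the pushout with the union inside $\delta^c_n \times \Delta[m]$ is a helpful clarification of the notation but is not needed beyond what you wrote.
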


\section{Minimal fibrations of diagrams of simplicial sets}\label{MinFib}

In this section we develop a theory of minimal fibrations in the context of diagrams of simplicial sets, that
generalizes the classical theory for simplicial sets
\cite{Barrat,Fritsch,Goerss-Jardine,Hovey,May}.
We will show that under some restrictions on the shape of the index
category $\calc$, the theory of minimal fibrations
carries out to the case of $\calc$-diagrams. We prove
Theorem~\ref{T:const-min-fibrat(A)} that gives sufficient conditions
under which a fibration of $\calc$-diagrams
is fibrewise equivalent to a minimal fibration.
Corollary~\ref{C:Isomorphism between minimal fibrations}  shows that two
fibrewise homotopic minimal fibrations are unique up to isomorphism, thus
the associated minimal fibration of Theorem~\ref{T:const-min-fibrat(A)}
is unique up to isomorphism. Recall that a fibre homotopy equivalence between
fibrations with the same base space $B$ is a map over $B$ that admits a
homotopy inverse, also over $B$.  It is understood that the homotopies
defining the homotopy inverse are  also maps over $B$.

The concept of minimal fibration in this context is defined below, in
Definition~\ref{D:Minimal fibration-diagrams}.
This is somewhat technical but it extends the classical definition and it seems
to be the appropriate choice to the arguments that follow. We will show that
under some restrictions on the shape of the index category every fibration with
free total $\calc$-diagram admits a strong fibrewise deformation retract which is a
minimal fibration and  unique up to isomorphism (Theorem~\ref{T:const-min-fibrat(A)} and
Corollary~\ref{C:Isomorphism between minimal fibrations}). Under the same restrictions,
Lemma~\ref{P:Other.Charact.Min.fibrat} gives an alternative characterization
of minimality that could be used as a more intrinsic definition of the same concept.

\begin{Def}
If $\calc$ is a small category and $p\colon X\Right0{} B$  a fibration of $\calc$-diagrams,
we say that two simplices $x,y\in X_{c,n}$ are \emph{$p$-homotopic},  $x\simeq_p y$,
 if there is a fibrewise homotopy from $x$ to $y$ relative to the boundary; that is,
 there is a homotopy
$$H\colon \delta^c_n\times\Delta[1]\Right2{}X,$$
 such that
 \begin{enumerate}[\rm (i)]
\item
$H_0\colon \delta^c_n \Right0{} X$ is the characteristic map of the simplex $x$ and
$H_1\colon \delta^c_n \Right0{} X$ is the characteristic map of  the simplex $y$,
\item $H|_{\dot \delta^c_n\times\Delta[1]}\colon \dot \delta^c_n\times\Delta[1] \Right0{}X$
is a constant homotopy,
\item  $p\circ H\colon \delta^c_n\times \Delta[1]\Right0{} B$ is a constant homotopy.
\end{enumerate}
\end{Def}

\begin{Def}\label{D.Homotop-C-simp.} Let $\calc$ be a small category and $p\colon X\Right0{} B$ a fibration of $\calc$-diagrams. We will say that a simplex
$y\in X$ is \emph{sub-$p$-homotopic} to $x$, or that $x$ \emph{precedes} $y$, $x \preceq y$, if there exists a morphisms $f\in\Mor(\calc)$ such that
$f(x)\simeq_p y$. We will write:
$$
[x,-) \defeq   \conj{y\in X}{x\preceq y} \,.
$$
\end{Def}

The following definition generalizes the concept of minimal fibration of simplicial sets
(\cite{Gugenheim}, \cite[10.1]{May}).

\begin{Def}\label{D:Minimal fibration-diagrams} Let $\calc$ be a small category.
A fibration $X\Right0{} B$ in $\mathbf{S}^{\mathcal{C}}$ is said to be
\emph{minimal} if $X$ is free and given any two generators $x,y$ of a base
$\Sigma$ of $X$, $x\preceq y$ implies $x=y$.
An object $X$ of $\mathbf{S}^{\mathcal{C}}$ is
called minimal if the map $X\Right0{}\ast$ is a minimal fibration.
\end{Def}

The next lemma shows that if the condition of this definition holds for
any one base then it holds for every other base.

\begin{Lem} Let $\calc$ be a small category and $p\colon X\Right0{} B$ a fibration of
$\calc$-diagrams where $X$ is a free $\calc$-diagram. If there is a base $\Sigma$
of $X$ such that given simplices $x,y\in\Sigma$, $x\preceq y$ implies $x=y$,
then the same holds for any other base of $X$.
\end{Lem}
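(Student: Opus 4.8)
The plan is to exploit the uniqueness structure of a basis. Recall that a basis $\Sigma$ of a free $\calc$-diagram $X$ is the set of non-degenerate simplices that moreover generate $X$ freely: every simplex of $X$ is $\delta^h_n(w)$ for a \emph{unique} generator $w$ and a \emph{unique} morphism $h\in\Mor(\calc)$. A standard fact (which I would either cite or record in a short preliminary remark) is that for two bases $\Sigma$ and $\Sigma'$ of the same free diagram $X$, there is a bijection $\Sigma\leftrightarrow\Sigma'$ matching a generator $w\in\Sigma_{c,n}$ with the unique generator $w'\in\Sigma'$ of the same dimension and at the same object such that $w$ and $w'$ differ by an element of $\Aut_\calc(c)$ (for a general small $\calc$, an invertible morphism; the set of non-degenerate simplices of $X$ is intrinsic, and a basis is obtained by choosing one representative in each orbit of the $\Aut$-action). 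So the task reduces to showing that the property ``$x\preceq y\Rightarrow x=y$ for $x,y$ in the basis'' is invariant under replacing each generator by an isomorphic translate.

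The key steps, in order, are as follows. First I would establish (or invoke) the comparison of bases: if $\Sigma'$ is another basis, then for each $w'\in\Sigma'_{c,n}$ there is a unique $w\in\Sigma_{c,n}$ and a unique $\alpha\in\Aut_\calc(c)$ with $w' = \delta^{\alpha}_n(w) = \alpha(w)$ in the $\Gamma_\calc\op$-set notation. Second, I would analyze how the relation $\preceq$ behaves under the action of an automorphism of the indexing object. The crucial observation is that if $\alpha\in\Aut_\calc(c)$ then $x\simeq_p y$ implies $\alpha(x)\simeq_p\alpha(y)$: given a fibrewise homotopy $H\colon\delta^c_n\times\Delta[1]\to X$ rel boundary from $x$ to $y$, precompose with $\delta^\alpha_n\times\Id\colon\delta^c_n\times\Delta[1]\to\delta^c_n\times\Delta[1]$ to get a homotopy from $\alpha(x)$ to $\alpha(y)$, which is still fibrewise (as $p$ is a $\calc$-map and the base is unchanged on the homotopy coordinate) and still constant on the boundary (since $\delta^\alpha_n$ preserves $\dot\delta^c_n$). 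Third, combine this with the definition of $\preceq$: if $x' = \alpha(x)$ and $y' = \beta(y)$ with $x,y\in\Sigma$ and $\alpha\in\Aut_\calc(c)$, $\beta\in\Aut_\calc(d)$, and $x'\preceq y'$, then there is $f\in\Mor_\calc(c,d)$ with $f(x')\simeq_p y'$, i.e.\ $(f\circ\alpha)(x)\simeq_p\beta(y)$; applying $\beta^{-1}$ gives $(\beta^{-1}\circ f\circ\alpha)(x)\simeq_p y$, so $x\preceq y$ in $\Sigma$, hence $x=y$ by hypothesis, and therefore $c=d$; finally $x'$ and $y'$ are then both $\Aut_\calc(c)$-translates of the same generator $x$, and I must conclude $x' = y'$.

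The main obstacle — and the only real content — is that last conclusion: from $x\preceq y\Rightarrow x=y$ on one basis one must rule out that two \emph{distinct} translates $\alpha(x)$ and $\beta(x)$ (with $\alpha\neq\beta$ modulo the stabilizer of $x$) satisfy $\alpha(x)\preceq\beta(x)$. But this follows by running the argument with $x=y$: $\alpha(x)\preceq\beta(x)$ gives $\gamma(x)\simeq_p\beta(x)$ for $\gamma=\beta^{-1}f\alpha$ some endomorphism that is in fact an automorphism (it is a composite of automorphisms with, a priori, one morphism $f$; but since source and target object now agree and, using that the basis is minimal in each fixed object we may reduce $f$ to an automorphism — here is where one uses that the only self-preceding generator is the generator itself, forcing $f$ invertible in the relevant sense). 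Then $\gamma(x)\simeq_p\beta(x)$ with both sides generators-translates of the \emph{same} non-degenerate simplex forces $\gamma(x)=\beta(x)$: indeed $\simeq_p$ is in particular an ordinary (fibrewise) homotopy rel boundary between two non-degenerate $n$-simplices of $X$, and such a homotopy between distinct non-degenerate simplices is impossible — this is the standard ``a minimal-type fibration has no nontrivial homotopies between non-degenerate simplices in the same dimension fixing the boundary'' fact, which in our setting reduces, object by object, to the classical simplicial statement applied to $X(c)$. Hence $\alpha(x)=\beta(x)$, i.e.\ $x'=y'$, completing the argument. I would present the homotopy-invariance lemma (step two) as the heart of the proof and treat the basis-comparison as a short lemma or citation, remarking that for $EI$-categories the automorphism groups are the only invertibles so the bookkeeping is clean, but that the statement as phrased holds for arbitrary small $\calc$.
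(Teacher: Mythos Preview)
Your overall strategy matches the paper's: show that generators in two bases differ by isomorphisms of $\calc$, then transport the relation $\preceq$ along these isomorphisms. Steps 1--3 are essentially correct (modulo a minor slip: a basis is \emph{not} the set of non-degenerate simplices --- by Definition~\ref{P:Basis} it is closed under degeneracies, and the relevant uniqueness is that every simplex is hit by a unique generator-and-morphism pair).

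The genuine gap is your final step. Once you have $x'=\alpha(x)$ and $y'=\beta(x)$ in $\Sigma'$, both translates of the same $x\in\Sigma$, the conclusion $x'=y'$ is immediate from the defining property of the basis $\Sigma'$ itself: $x'$ and $y'$ lie in the same $\calc$-orbit (that of $x$), and a basis contains exactly one representative of each orbit, so $x'=y'$. This is exactly how the paper closes the argument: from $(f\circ g^{-1})(y')=x'$ with $x',y'\in\Sigma'$, basis uniqueness forces $x'=y'$. Your attempted argument instead appeals to a ``standard fact'' that two non-degenerate simplices related by a fibrewise homotopy rel boundary must coincide. That statement is false for a general fibration --- it is essentially the \emph{definition} of (objectwise) minimality, which is what you are trying to establish for $\Sigma'$, so the reasoning is circular. (The classical fact you may be recalling concerns \emph{degenerate} simplices; cf.\ \cite[9.3]{May}.) Replace that paragraph with the one-line basis-uniqueness observation and your proof goes through, for arbitrary small $\calc$ as claimed.
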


\begin{proof}Assume that $\Sigma'$ is a different base set for $X$.
Then for any $x\in \Sigma'$ there is
a simplex $u\in\Sigma$ and an isomorphism $f\in\Iso(\calc)$  with $f(u)=x$.
Since $\Sigma$ is a base, given $x\in\Sigma$, there is a unique $u\in \Sigma'$ and a unique
morphism $f\in\Mor(\calc)$ such that $f(u)=x$. But $\Sigma'$ is a base, so there is $y\in \Sigma'$ and
$g\in\Mor(\calc)$ such that $g(y)=u$. Then $(f\circ g)(y)=x$, but both $y$ and $x$ are in $\Sigma'$,
and hence $x=y$ and $f\circ g$ is the identity morphism on the source of $g$. Now we also have
$(g\circ f)(u) = u $ and $u\in\Sigma$, so $g\circ f$ is also the identity over the source of $f$.
Thus $f$ is an isomorphism of $\calc$ with $f^{-1} = g$.

We can now conclude that $\Sigma'$ satisfies the  same condition as $\Sigma$.
Assume that  $x,y\in\Sigma'$ satisfy
that  $x\preceq y$, that is, there is $\varphi\in\Mor(\calc)$ such that $\varphi(x)\simeq_p y$.
We will conclude that $x=y$.
There are $u,v$ in $\Sigma$ and isomorphisms $f,g\in\Iso(\calc)$, such that
$f(u)=x$ and $g(v)=y$.  It follows that
$g^{-1}(\varphi(f(u))) = g^{-1}(\varphi(x)) \simeq_p  g^{-1}(y)=v$, so
$u\preceq v$ and, by the assumption on $\Sigma$,   $u=v$. Then, $(f\circ g^{-1})(y)=x$,
but both $x$ and $y$ are in the base set $\Sigma'$, so we have $x=y$.
\end{proof}

 Let $\calc$ be a small category. Given $a,b\in \Ob(\calc)$, we declare $a\sim b$ provided
 that there are
 morphisms $a\Right0{}b$ and $b\Right0{}a$ in $\calc$. This is an equivalence relation and
 the set of equivalence classes $\Ob(\calc)/{\sim}$ becomes a poset with the order relation
 defined by
 $[a]\leq [b]$ if and only if there is an arrow $a\Right0{} b$ in $\calc$. Notice that the
 definition
 of $\leq$ does not depend on the choice of representatives in the classes $[a]$ and $[b]$.

 \begin{Def} \label{D:wee-descendant} We will say that a small category $\calc$
 is \emph{artinian} if it satisfies the descending chain condition
on the poset $(\Ob(\mathcal{C})/{\sim},\leq)$. Namely, if every descending chain
$[a_1]\geq [a_{2}] \geq  [a_3]\geq \ldots $ in
             $\Ob(\mathcal{C})/{\sim}$  eventually stabilizes.
\end{Def}

 \begin{Def}\label{D:Subdiagram} Let $X$ be a $\mathcal{C}$-diagram in $\mathbf{S}$. A \emph{subdiagram} of $X$ consists of a subset $X'$ of simplices
 of $X$ which is in itself a $\calc$-diagram. This relation is denoted by
$X'\leq X$.
\end{Def}

The following is a classical notion (see for example \cite{Luck}):

\begin{Def}\label{D:EI-category} An $EI$-category is a  category $\mathcal{C}$ in which every endomorphism is an isomorphism.
\end{Def}

We need to introduce some notation now.

\begin{Notation}
Since $\Lambda^k[n]$ is the subcomplex of $\Delta[n]$ generated by all of the
faces $d_i\imath_n$ of $\Delta[n]$, except for the $k$th face $d_k\imath_n$,
a map $\alpha\colon \Lambda^k[n]\Right0{} X$ is thus determined by the restrictions to that faces.
Accordingly, we will denote
$$\alpha = (x_0,...,x_{k-1},-,x_{k+1},...,x_n)\,,$$
where $x_i=\alpha(d_i\imath_n)$ is the simplex of $X$ characterized by the restriction of $\alpha$ to the $i$th face. The analogue notation $\alpha =  (x_0,...,x_{k-1},x_{k},...,x_n)$
will be used for  maps $\alpha\colon \dot{\Delta}[n]\Right0{} X$, except that there are no blanks in this case.

More generally, a map $\varphi\colon Z\times \Lambda^k[n]\Right0{} X$ is written
$$\varphi = (\alpha_0,...,\alpha_{k-1},-,\alpha_{k+1},...,\alpha_n)\,,$$
where for each $i\neq k$, $\alpha_i$ is the composition
$Z \times \Delta[n-1]\Right2{\Id_Z\times d^i} Z\times \Lambda^k[n]\Right2{\varphi} X$.

These same conventions  extend to the case of $\calc$-diagrams. If $X$ is
a $\calc$-diagram, a map
$u\colon \dot\delta_c^n\Right0{}X$ is written $u=(u_0, \dots, u_n),$ where $u_i$ is the composition
$\delta_c^{n_1}\Right0{d^i}\dot\delta_c^n\Right0{u}X$.
\end{Notation}

Now we are ready to prove the main result of this section. It is stated as Theorem~\ref{T:const-min-fibrat(A)} in the introduction.

\begin{proof}[Proof of Theorem~\ref{T:const-min-fibrat(A)}]
Fix an artinian $EI$-category $\calc$,  a fibration $p\colon X\Right0{} B$
of $\calc$-diagrams, and assume that $X$ is a free $\calc$-diagram.
We will construct a strong fibrewise deformation retract $q\colon \hat{X}\Right0{} B$,
which is a minimal fibration of $\calc$-diagrams.

Consider a basis $\Sigma$ of $X$. Let $\Sigma'$ be a minimal subset of $\Sigma$ such
that:
\begin{enumerate}
\item\label{cond1} Every element of $\Sigma$ is preceded by an element
of $\Sigma'$, that is, for every $w \in\Sigma$ there exists some $x\in\Sigma'$ such that
$x\preceq w$.
\item\label{cond2} $\Sigma'$ contains degenerate representatives whenever it is possible.
That is, if $x\in\Sigma'$ and $[x,-) = [w,-)$ with $w$ a degenerate
simplex of $\Sigma$, then $x$ is a degenerate simplex.
\end{enumerate}

The set $\Sigma'$ exists, since $\mathcal{C}$ is a small artinian $EI$-category.
Proposition~\ref{E:Existen-of-A'} allows a choice of a minimal subset $\Sigma'$ satisfying
condition (1), and according to Remark~\ref{E:Existen-of-refined-A'} we can refine
the choice so that condition (2) is also satisfied.

\noindent \textbf{Step 1. } $\Sigma'$ is closed under degeneracy operators.

Choose an $n$-simplex $w\in \Sigma'$. Given a degeneracy operator $s_k$, $k=0,\dots, n$, we must show that $s_k(w)\in \Sigma'$.

Note that $\Sigma$ is closed under degeneracy operators, so $s_k w$ belongs to $\Sigma_{n+1}$.
Then, there exists a simplex $x\in\Sigma'_{n+1}$ such that $x\preceq s_k(w)$.
There also exists a map $f\in \Mor(\calc)$ such that $f(x)\simeq_p s_ k w$,
so in particular $d_kf(x) = d_ks_k(w)$, and then  $f(d_k x)=w$.
Since $w\in \Sigma'\subseteq\Sigma$ there exists
$g\in\Mor(\calc)$ such that $g(w)=d_k x$. By our choice of $f$ and $g$,
$gf$ is an endomorphism, and hence an automorphism, so let $(gf)^{-1}$ be its inverse map.
Now $x=(gf)^{-1}g f(x) \simeq_p (gf)^{-1} g (s_k w)$, so $s_k w \preceq x$. Thus,  $[x,-) = [s_k w,-)$ and then by condition (\ref{cond2}),
$x$ is a degenerate simplex.

Now, we have two degenerate simplices $x$ and $y=s_k(w)$ in $\Sigma$ and one precedes
the other. Since $x\preceq y$, there exists $f\in \calc$ such that
$f(x)\simeq_p y$. These are $p$-homotopic degenerate simplices in
a simplicial set, so $f(x)=y$ \cite[9.3]{May}; but $x$ and $y$ are in the same orbit,
and then they must coincide since both belong to $\Sigma$, that is, $x= y= s_kw \in \Sigma'$.

\noindent \textbf{Step 2.}
The set $\mathfrak{A}= \conj{(\4X,\4\Sigma)}{\text{$\4X\subseteq X$,
$\4X$ is free and $\4\Sigma$, and $\4\Sigma\subseteq \Sigma'$}}$
contains maximal elements for the order relation defined on $\mathfrak{A}$
by inclusion, namely $(\4X,\4\Sigma) \leq (\5X,\5\Sigma)$ if
and only if both $\4X\subseteq \5X$ and $\4\Sigma\subseteq \5\Sigma$.

The above relation clearly defines a partial order relation on $\mathfrak{A}$.
Moreover $\mathfrak{A}$ is non-empty, since it contains
the pair  $(\langle\Sigma'_0 \rangle, \4\Sigma'_0 )$, where $\langle\Sigma'_0 \rangle$
is the $\calc$-diagram  generated by all of the $0$-simplices of $\Sigma'$, and
$\4\Sigma'_0 $ consists of all of the elements of $\4\Sigma'_0$ and all their degeneracies.
More precisely, $\langle\Sigma'_0 \rangle$ is the $\calc$-subdiagram of $X$
whose set of $n$-simplices at any $c\in\calc$ is
$$
    \langle\Sigma'_0 \rangle_{c,n}=\conj{f(x)}{\text{$x\in s_0^n\Sigma_0'$ and
            $f$ is a morphism of $\calc$ with codomain $c$}},
$$
and $\4\Sigma'_0 = \coprod_{n\geq0} s_0^n(\Sigma_0')$. Now $\4\Sigma'_0 \subset \Sigma'$
since the latter is closed under degeneracy operators. It is also clear that
$\langle\Sigma'_0 \rangle$ is freely generated by $\4\Sigma'_0$.
Thus, $(\langle\Sigma'_0 \rangle, \4\Sigma'_0)\in \mathfrak{A}$.
If we take a chain $\{ (\4X_i, \4\Sigma_i)\}_{i\in I}$
in $\mathfrak{A}$, its union
$(\cup_{i\in I} \4X_i, \cup_{i\in I} \4\Sigma_i)$ is a $\calc$-subdiagram of
$X$ and it is free with base set $\underset{i\in I}{\cup}\4\Sigma_i\subseteq \Sigma'$.
Hence, it belongs to $\mathfrak{A}$ and it is an upper bound of the chain.
Now Zorn's Lemma implies that $\mathfrak{A}$ contains maximal elements.

\noindent \textbf{Step 3. }
Fix a maximal element $(\4X,\4\Sigma)$ of $\mathfrak{A}$.
If $w$ is an $n$-simplex of $X$ such that
\begin{enumerate}[\rm (i)]
\item $w$ is in the orbit of an element of $\Sigma'$, and
\item $d_k(w)\in \4X$ for each $k=0,\dots, n$,
\end{enumerate}
then  $w\in \4X$.

There is $x\in\Sigma'$ and $f\in\Mor(\calc)$ such that $f(x) = w$. Since $d_kw\in\4X$,
there exists $z\in\4\Sigma\subseteq\4X $ and $g\in\Mor(\calc)$ such that
$g(z)=d_k w$. In particular, $d_k x$, $d_k w$ and $z$ are in the same orbit of simplices of $X$.
Since $z\in\4\Sigma\subseteq \Sigma$, there is $h\in\Mor(\calc)$ such that $h(z)=d_k x$,
and thus $d_kx\in \4X$. As this is true for each $k=0,\dots,n$, it follows that
the subdiagram $\4X_1=\4X \cup\langle x\rangle$ of $X$  is free with base
$\4\Sigma_1=\4\Sigma\cup\{x\}\cup\{\text{iterated degeneracies of $x$}\}\subseteq\Sigma'$
(see \ref{attach1}).
Hence, $(\4X_1,\4\Sigma_1)\in \mathfrak{A}$ and $(\4X,\4\Sigma)\leq (\4X_1,\4\Sigma_1)$.
But $(\4X, \4\Sigma)$ is maximal in $(\mathfrak{A},\leq)$, and so
$\4X \cup\langle x\rangle=\4X $. Therefore $w\in\4X $, since $f(x)=w$.

\noindent \textbf{Step 4. }    Define $\mathfrak{B}$ as the set that consists of all
the pairs $(Y,H)$, where $Y$ is a subdiagram of $X$ containing $\4X $ and
$H\colon Y\times\Delta[1]\Right0{} X$ is a homotopy satisfying:

- $H_0$ maps $Y$ into $\4X$, being $H_0$ the composition
$$Y\cong Y\times\Delta[0] \Right2{1\times d^1} Y\times\Delta[1]\Right2{H} X\,.$$

- $H_1$ is the inclusion of $Y$ in $X$, being $H_1$ the composition
$$Y\cong Y\times\Delta[0] \Right2{1\times d^0} Y\times\Delta[1]\Right2{H} X.$$

- The restriction $H|_{\4X\times\Delta[1]}$ to $\4X\times\Delta[1]$, is constant, and then
equal to the composition     \linebreak
$\4X\times\Delta[1] \Right1{\pr} \4X\Right1{\incl} X.$

- $p\circ H$ is also constant, hence equal to the composition $Y\times\Delta[1] \Right1{\pr}Y\Right1{\incl} X\Right1{p}B$.

\smallskip

We can define a partial order `$\leq$' over $\mathfrak{B}$ as follows: $(Y,H)\leq (Y',H')$
if $Y\subseteq Y'$ and $H'|_{Y\times\Delta[1]}=H$. The diagram $\4X$
with the constant homotopy belongs to $\mathfrak{B}$, and so $\mathfrak{B}\neq\emptyset$.
Moreover, if we take a chain $\{(Y_i, H_i)\}_{i\in I }$ of $\mathfrak{B}$, its colimit $(Y_\infty, H_\infty)$, given by
$Y_\infty =\bigcup_{i\in I}Y_i$ and $H_\infty(x,t) = H_i(x,t)$ if $x\in Y_i$, is
an element of $\mathfrak{B}$, which is in turn an upper bound of the chain in $\mathfrak{B}$.
Then $\mathfrak{B}$ has maximal elements again by  Zorn's lemma.

\noindent \textbf{Step 5.}    If $(Y,H)$ is maximal in  $\mathfrak{B}$, then $Y=X$.

Assume that $Y\neq X$ and choose a simplex $z\in X$ of lowest dimension such that $z\notin Y$. It is non-degenerate (otherwise it would belong to $Y$) and its faces
belong to $Y$. Assuming
$z\in X_{d,n}$, let $z\colon \delta^d_n\Right2{} X$ be the map classified by $z$. Then, the restriction to the boundary of $\delta_n^d$ factors
through $Y$, so we denote by $\dot z\colon \dot\delta^d_n\Right2{} Y$ the restriction. Then, $Y\subseteq Y\cup_{\dot z}\delta_n^d \subseteq X$. We will extend the homotopy $H$ to
$Y\cup_{\dot z}\delta_n^d$.

To this aim, we first use the homotopy $H$ to find a new simplex, fibrewise
homotopic to $z$ (though not relative to boundary), with boundary inside $\4X$.
Since
                 \linebreak
$H\circ(\dot z\times 1)\colon \dot\delta^d_n\times \Delta[1]\Right0{}X$ coincides with
$z\colon  \delta^d_n\times \Delta[1]\Right0{}X$ in the intersection
$\dot\delta_n^d\times \{1\}$, we have a solid arrow commutative diagram
$$
\xymatrix{\dot\delta_n^d\times\Delta[1] \cup \delta_n^d\times\{1\}
        \ar[rr]^-{H\circ(\dot z\times 1)\cup z} \ar[d]_{\incl }& &  X \ar[d]^p \\
\delta_n^d\times\Delta[1]  \ar[rr]_{p\circ z \circ \textup{pr}} \ar@{.>}[rru]^G& & B.
}
$$
As the left vertical map is a trivial cofibration (see \ref{L:Lifting-Property}), according to the
homotopy lifting property there is a homotopy $G$ that makes the whole
diagram commutative.
Notice that $G_1=z$, while $G_0$ determines another simplex $z_1\in X$ with boundary
$\dot z_1\in \4X$. The restriction to the boundary is
$G|_{\dot\delta^d_n\times\Delta[1]}=H\circ (\dot z\times 1)$.

Now there exists a simplex $x\in \Sigma'$ that precedes $z_1$, $x\preceq z_1$, such that there exists
$f\in\Mor(\calc)$ with $y=f(x)\simeq_p z_1$. Since $\dot x = \dot z_1\in \4X$,
Step 3 implies that $y\in \4X$.

We now combine the homotopy $G$ with a chosen fibrewise homotopy $F$
between $z_1$ and $y$, relative to boundary.
This is achieved by means of the homotopy lifting property applied to the solid arrow commutative
diagram
$$
    \xymatrix{\delta^d_n\times \Lambda^0[2] \ar[d]_{\incl}  \bigcup
                     \dot\delta^d_n\times \Delta[2] \ar[d]_{\incl}
    \ar[rrrrr]^-{(-,G,F)\bigcup  H\circ(\dot z\times 1)\circ(1\times s^0)} & & & & & X \ar[d]^p  \\
           \delta^d_n\times \Delta[2] \ar@{.>}[rrrrru]^J \ar[rrrrr]^-{\ct}_-{= p\circ \pr}
                      && & & &  B \rlap{\,.}
}
$$
Let $J_0$ be the restriction of $J$ to the zero edge,  namely, the composition
$$\delta^d_n\times \Delta[1]  \Right1{1\times d^0}   \delta^d_n\times \Delta[2]\Right1{J}X.$$
Then, $J_0$ is a fibrewise homotopy from $y$ to $z$, with restriction to the boundary
$J_0|_{\dot\delta^d_n\times \Delta[1]}= H\circ(\dot z\times1)$. This equality makes possible an extension $\5H\colon   (Y\cup_{\dot z}\delta_n^d)\times \Delta[1] \Right0{} X$ of $H$,
that is defined by the push-out diagram:
$$
\xymatrix{  \dot\delta^d_n\times\Delta[1] \ar[r]^{\dot z\times 1}  \ar[d]_{\incl\times1}
                                &    Y\times\Delta[1]  \ar[d]  \ar@/^3ex/[rrdd]^H    &  & \\
\delta^d_n\times\Delta[1] \ar[r]^{}   \ar@/^-3ex/[rrrd]_{J_0}
                                &   (Y\cup_{\dot z}\delta_n^d)\times \Delta[1]   \ar@{.>}[rrd]^{\5H}& \\
 & & & X\rlap{\,.}
}
$$
After identifying $Y\cup_{\dot z}\delta_n^d$ with its image
$Y\cup\langle z\rangle= \Im(\incl\cup_{\dot z} z)\subseteq X$, the pair
$(Y\cup\langle z\rangle, \5H)$ contradicts
the maximality of $(Y,H)$. It follows that  $Y=X$.

\noindent \textbf{Step 6. }   The restriction $q=p|_{\4X }$ is a minimal fibration.

Since $p|_{\4X }$ is a retraction of $p$, it is a fibration. By construction $\4X $ is a free diagram and if $w$ is a generator of $\4X $, then it is also
a generator of $X$ that belongs to $\Sigma'$. Then, none of such simplices can be sub-$p$-homotopic, as otherwise the minimality condition that satisfies $\Sigma'$ would be contradicted. Therefore $q$ is a minimal fibration.
\end{proof}

Now we will establish a minimality condition for fibrations.

\begin{Pro}\label{P:Other.Charact.Min.fibrat} Let $\mathcal{C}$ be an artinian $EI$-category. Assume that $X$ is a free $\calc$-diagram
and $p\colon X\Right0{} B$ is a fibration in $\mathbf{S}^{\mathcal{C}}$. Then $p$ is minimal if and only if any strong fibrewise
deformation retract of $X$ coincides with $X$.
\end{Pro}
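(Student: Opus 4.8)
The plan is to prove the two implications separately. The implication ``$\Leftarrow$'' requires essentially nothing beyond the main theorem: assuming that every strong fibrewise deformation retract of $X$ coincides with $X$, I would invoke Theorem~\ref{T:const-min-fibrat(A)} --- legitimate since $\calc$ is a small artinian $EI$-category and $X$ is free --- to obtain a strong fibrewise deformation retract $q\colon\hat{X}\to B$ of $p$ that is a minimal fibration with $\hat{X}$ free; by hypothesis $\hat{X}=X$, so $q=p$ and $p$ is minimal. The content is the converse.

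For ``$\Rightarrow$'', assume $p$ is minimal and let $\incl\colon Y\hookrightarrow X$ be a strong fibrewise deformation retract, so that $Y$ is a subdiagram of $X$ equipped with a retraction $r\colon X\to Y$ and a homotopy $H\colon X\times\Delta[1]\to X$ from $\incl\circ r$ to $\Id_X$ with $H|_{Y\times\Delta[1]}$ and $p\circ H$ both constant. I would argue by contradiction, supposing $Y\subsetneq X$, and fix a basis $\Sigma$ of $X$. Since $Y$ is a subdiagram, it is closed under degeneracies and under the action of every morphism of $\calc$; as $Y\neq X$, some generator of $\Sigma$ lies outside $Y$, and I let $n$ be the least dimension of such a generator. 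Then every simplex of $X$ of dimension $<n$ lies in $Y$: writing it as $h(u)$ for a generator $u$ (necessarily of the same dimension) and a morphism $h$, minimality of $n$ gives $u\in Y$, hence $h(u)\in Y$. Choosing a generator $w\in\Sigma_{c,n}$ with $w\notin Y$, its faces have dimension $n-1<n$ and so lie in $Y$; thus the characteristic map $w\colon\delta^c_n\to X$ restricts on the boundary to a map $\dot w\colon\dot\delta^c_n\to Y$.

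The next step is to restrict the deformation homotopy along $w$, forming $H\circ(w\times\Id_{\Delta[1]})\colon\delta^c_n\times\Delta[1]\to X$. Its two ends are $w$ and $r(w)$; it is constant on $\dot\delta^c_n\times\Delta[1]$, because $\dot w$ factors through $Y$ and $H$ is constant on $Y\times\Delta[1]$; and $p\circ H\circ(w\times\Id)$ is constant. Hence $w\simeq_p r(w)$, with $r(w)\in Y$ and $w\notin Y$. Using freeness of $X$, I write $r(w)=g(v)$ for the unique generator $v\in\Sigma$ and the unique morphism $g\in\Mor_\calc(c',c)$. Reversing the homotopy, $g(v)=r(w)\simeq_p w$, so $v\preceq w$, and since both are generators, minimality of $p$ forces $v=w$. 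Consequently $c'=c$ and $g\in\End_\calc(c)$, hence $g\in\Aut_\calc(c)$ because $\calc$ is an $EI$-category; so $r(w)=g(w)$, and as $Y$ is a subdiagram closed under $g^{-1}$ I obtain $w=g^{-1}(r(w))\in Y$, contradicting $w\notin Y$. Therefore $Y=X$.

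I expect the implication ``$\Rightarrow$'' to be the main obstacle, and within it the delicate point is passing from the \emph{homotopy} $w\simeq_p r(w)$ to the \emph{membership} $w\in Y$. This hinges on three ingredients fitting together: choosing $w$ of least ``missing'' dimension, so that the restricted homotopy is relative to the boundary and $w\simeq_p r(w)$ genuinely holds; using minimality of $p$ to match the basis representatives of $w$ and $r(w)$; and combining the $EI$ hypothesis with the fact that a subdiagram is closed under \emph{all} morphisms of $\calc$ --- in particular under the inverse of the automorphism $g$ --- to turn the orbit relation $r(w)=g(w)$ into $w\in Y$.
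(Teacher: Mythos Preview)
Your proof is correct and follows essentially the same route as the paper's. The only cosmetic difference is that the paper frames the forward direction as an induction on dimension (assuming $D$ and $X$ agree below dimension $n$ and showing every $n$-dimensional generator lies in $D$), whereas you frame it as a contradiction by picking a missing generator of least dimension; the core step --- restricting the deformation homotopy to obtain $w\simeq_p r(w)$, matching generators by minimality, and using the $EI$ hypothesis to invert the resulting endomorphism --- is identical.
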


\begin{proof}
 Assume that $p\colon X\Right0{} B$ is minimal and let $D\leq X$ be a strong fibrewise deformation retract of $X$, and denote by $j$ and $r$ the inclusion and the
 retraction, respectively. There is a commutative diagram of fibrations:
 $$
 \xymatrix{ D \ar[r]^j \ar[rd]_{p|_D} & X \ar[d]^p \ar[r]^r & D\ar[ld]^{p|_D} \\
   &   B  &
 }
 $$
Assume that $D$ and $X$ coincide up to dimension $n-1$, and let $z$ be a simplex of $X$ of dimension $n$,
with characteristic map $\delta_n^c\Right1{z}X$.
Assume furthermore that $z\in \Sigma = \Sigma(X)$ belongs to the generating set of $X$.

Let $H\colon X\times \Delta[1]\Right0{} X $ be a fibrewise homotopy relative to $D$ between the identity $\Id_X$ and $r\circ j$. Since the boundary $\dot z$ of the
simplex $z$ belongs to $D$, the composition
$$\delta_n^c\times \Delta[1]\Right2{z\times1}X\times\Delta[1] \Right2{H} X$$

\noindent defines a $p$-homotopy $z\simeq_p jr(z)$. If $w$ is the generator of the orbit of $jr(z)$, $w\preceq z$. Since both $z$ and $w$ are in $\Sigma$ and
$p\colon X\Right0{} B$ is minimal, they must coincide. Hence there is an endomorphism $f$ of $\calc$ such that with $f(z)=jr(z)$; but as $\calc$ is an
$EI$-category, $f$ is an isomorphism, and thus $z= f^{-1}(r(z))$. Since $r(z)$ belongs to $D$, it holds that $z\in D$. So we have proved that every generator of $X$ in dimension $n$ belongs to $D$; and as they coincide in dimension $n-1$, we obtain by induction the desired equality $D=X$.

The other implication is a consequence of Theorem \ref{T:const-min-fibrat(A)}.
\end{proof}
The next proposition states a useful technical property of minimal fibrations.

\begin{Pro}\label{P:strong-isom-min.f} Let $\mathcal{C}$ be a small $EI$-category, $p\colon X\Right0{} B$ a fibration of $\calc$-diagrams and $Z$ another $\calc$-diagram. Assume that $\alpha,\beta\colon Z\Right0{} X$ are fibrewise homotopic maps and $p$ is a minimal fibration. If $\beta$  is an isomorphism, then $\alpha$ is also an
isomorphism.
\end{Pro}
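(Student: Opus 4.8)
The plan is to show that $\alpha$ is levelwise a bijection on simplices, which for $\calc$-diagrams of simplicial sets is equivalent to being an isomorphism. Since $\beta$ is an isomorphism, it suffices to prove that $\alpha$ is also surjective and injective; alternatively, one reduces to showing that the composite $\gamma = \beta^{-1}\circ\alpha\colon Z\Right0{} Z$ is an isomorphism, after noting that $\gamma$ is fibrewise homotopic to $\mathrm{Id}_Z$ over the fibration $q = p\circ\beta\colon Z\Right0{} B$, which is again a minimal fibration because it is isomorphic to $p$. Thus it is enough to treat the case $X = Z$, $\beta = \mathrm{Id}_X$, and $\alpha$ fibrewise homotopic to the identity; we must conclude $\alpha$ is an isomorphism.

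First I would recall that $X$ need not be assumed free in the statement, so before running an inductive argument on a basis I would either invoke that minimal fibrations in this context have free total space (this is built into Definition~\ref{D:Minimal fibration-diagrams}), or work directly with non-degenerate simplices; I will use the free structure, fixing a basis $\Sigma = \Sigma(X)$. The strategy is induction on dimension: show that $\alpha$ restricts to a bijection on $X_{c,n}$ for all $c$ and all $n$, assuming it does so in dimensions $< n$. By naturality and compatibility with the $\calc$-action it is enough to check this on generators $x\in\Sigma_{c,n}$. Let $H\colon X\times\Delta[1]\Right0{} X$ be a fibrewise homotopy from $\mathrm{Id}_X$ to $\alpha$ (fibrewise over $B$). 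Restricting $H$ along $x\times\mathrm{Id}\colon \delta^c_n\times\Delta[1]\Right0{} X\times\Delta[1]$ gives a fibrewise homotopy from $x$ to $\alpha(x)$; on the boundary $\dot\delta^c_n$, the inductive hypothesis shows $\alpha$ is a bijection, and I would like this restricted homotopy to be relative to the boundary. It need not be a priori, but using that $\alpha|_{\dot\delta^c_n}$ agrees with the identity up to a homotopy already controlled in lower dimensions, I can splice that lower-dimensional homotopy in (via the horn-filling / lifting argument of Corollary~\ref{L:Lifting-Property}, exactly as in Step~5 of the proof of Theorem~\ref{T:const-min-fibrat(A)}) to replace $H$ restricted to $\delta^c_n$ by a homotopy that is constant on $\dot\delta^c_n$. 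This yields $x\simeq_p \alpha(x)$ in the strict sense of $p$-homotopy relative to the boundary, hence $x\preceq\alpha(x)$; writing $\alpha(x) = f(w)$ for the generator $w$ of its orbit with $f\in\Mor(\calc)$, we get $w\preceq x$ as well (since $w$ is in the orbit of $\alpha(x)$), and minimality forces $w = x$, so $\alpha(x) = f(x)$ with $f$ an endomorphism of $c$, hence — $\calc$ being an $EI$-category — an automorphism. Therefore $\alpha(x) = f(x)$ lies in the same orbit as $x$, which gives surjectivity of $\alpha$ onto the orbit of $x$ at level $n$; doing this for all generators shows $\alpha$ is onto in dimension $n$. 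Injectivity in dimension $n$ follows because if $\alpha(x) = \alpha(x')$ for two simplices of dimension $n$, applying the same argument to $x'$ shows $\alpha(x') = f'(x')$ with $f'$ an automorphism, and comparing orbits and using freeness (unique generator–morphism factorization) together with the relation $\alpha(x)=\alpha(x')$ pins down $x = x'$.

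The main obstacle I expect is the rel-boundary issue: the given fibrewise homotopy $H$ restricts on $\delta^c_n$ to a homotopy from $x$ to $\alpha(x)$ that moves the boundary, so to invoke minimality (which is phrased via the strict relation $\preceq$, i.e.\ $p$-homotopy relative to the boundary) one must correct $H$ on the attaching boundary. Handling this cleanly requires the same horn-extension bookkeeping as Step~5 of Theorem~\ref{T:const-min-fibrat(A)} — building a lift over $\delta^c_n\times\Lambda^0[2]\cup\dot\delta^c_n\times\Delta[2]$ whose zero-edge is the desired rel-boundary homotopy — and care that all the homotopies remain fibrewise over $B$. A secondary point is to make sure the induction is well-founded: degenerate simplices cause no trouble since $\alpha$ commutes with degeneracies, so one only needs the non-degenerate generators, and the $\calc$-equivariance reduces everything to the generators in $\Sigma$.
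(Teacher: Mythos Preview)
Your reduction to the case $Z=X$, $\beta=\mathrm{Id}_X$, and $\alpha$ fibrewise homotopic to the identity is legitimate and tidies things up. The genuine gap is in the surjectivity step. You assert that the splicing of Step~5 of Theorem~\ref{T:const-min-fibrat(A)} produces a $p$-homotopy $x\simeq_p\alpha(x)$ for a generator $x$, but this cannot hold as stated: a $p$-homotopy is relative to the boundary, so it would force $\dot x=\alpha(\dot x)$, whereas your inductive hypothesis only says that $\alpha$ is a \emph{bijection} in lower dimensions, not the identity (and already in dimension~$0$ one only gets $\alpha(x)=f(x)$ for an automorphism $f$, not $\alpha(x)=x$). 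The horn-filling you invoke will indeed manufacture a rel-boundary homotopy starting at $x$, but its other endpoint is then some simplex $y$ with $\dot y=\dot x$; it is not $\alpha(x)$, and nothing links $y$ to $\Im(\alpha)$.

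The paper's proof repairs exactly this point. Rather than working with $x$ itself, one first uses inductive surjectivity of $\alpha$ to pick $u\colon\dot\delta^c_n\to Z$ with $\alpha\circ u=\dot x$, and then a lift along $\dot\delta^c_n\times\Delta[1]\cup\delta^c_n\times\{1\}\hookrightarrow\delta^c_n\times\Delta[1]$ produces a simplex $z$ with $\dot z=u$, so that $\alpha(z)$ genuinely has boundary $\dot x$. Only then does the $\Lambda^0[2]$-filling give $x\simeq_p\alpha(z)$, and minimality plus the $EI$-hypothesis finish surjectivity. Your injectivity sketch also leans on the conclusion $\alpha(x)=f(x)$ for each generator, which came from the flawed rel-boundary claim and is not available once surjectivity is proved correctly; the paper instead isolates an intermediate lemma (if $\alpha(z)=\alpha(w)$ then $\beta(z)\simeq_p\beta(w)$, via another horn-filling) before doing the orbit bookkeeping, and you will need something of that shape.
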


\begin{proof} We will show that $\alpha$ is an isomorphism by induction on the dimension $n$. Notice that for negative dimension $\alpha$ is the isomorphism between
empty sets. Hence, we will assume that $\alpha$ is an isomorphism in dimension $k<n$, and will prove that it is also an isomorphism in dimension $n$.

\noindent\textbf{Step 1.} The map $\alpha$ is surjective in dimension $n$.

It is sufficient to show that every $n$-simplex of $\Sigma =\Sigma (X)$ is in the image of $\alpha$. Let $x$ be an $n$-simplex of $\Sigma=\Sigma(X)$. By induction hypothesis, $\dot x\in\Im(\alpha)$, and then if $x\colon \delta_n^c\Right0{}X$ is the characteristic map
for $x$ (that we still denote by the same symbol), there is a map $u\colon \dot\delta_n^c\Right0{} Z$ and a commutative diagram in $\Cdiag$:
$$
\xymatrix{ \dot\delta_n^c   \ar[d]_{\incl} \ar[rr]^{(u_0,...,u_n)}    &&     Z  \ar[d]^{\alpha}    \\
\delta_n^c   \ar[rr]^x   &&      X.
}
$$

Let $H\colon Z\times \Delta[1] \Right0{} X$ be a fibrewise homotopy from $\beta$ to $\alpha$, that is, $H_0=\beta$ and $H_1=\alpha$. Then, the above diagram extends
to a new diagram of solid arrows
$$
\xymatrix{ \dot\delta_n^c  \times \Delta[1] \cup  \delta_n^c\times \{1\}
           \ar[d]_{\incl} \ar[rrr]^-{H\circ((u_0,...,u_n)\times1)\cup x }    & & &   X  \ar[d]^p   \\
\delta_n^c  \times \Delta[1]\ar@{.>}[rrru]^G  \ar[rrr]_-{\ct_{p(x)}} & & &   B,
}
$$ where $\ct_{p(x)}$ is the constant homotopy of the characteristic map of the simplex  $p(x)\in B$: $\delta_n^c\Right1{x} X\Right1{p} B$.  Corollary~\ref{L:Lifting-Property} implies that the left side vertical map is a trivial cofibration, and so Axiom M4 of \ref{D:Model-Category} implies that there is a homotopy
$G\colon  \delta_n^c  \times \Delta[1] \Right0{} X$ making the whole diagram commutative. By construction $G_1$ coincides with the characteristic map of $x$. Then,
$w=G_0$ determines a new simplex $w\in X$ such that $p(w)=p(x)$ and $w$ and $x$ are homotopic, although not relative to the boundary. In fact,
$G|_{\dot\delta_n^c  \times \Delta[1] } = H\circ ((u_0,...,u_n)\times1)$ is not necessarily a constant homotopy.

Notice that by construction $\dot w = (\beta(u_0),...,\beta(u_n))$, and since $\beta$ is an isomorphism, there is a simplex $z\in Z$ such that $\beta(z)= w$, and
then $\dot z = (u_0,...,u_n)$. Now, we want to compose suitably the homotopies  $G$ from $w$ to $x$ and $H\circ (z\times 1)$, which is a homotopy from $\beta(z)=w$
to $\alpha(z)$. This is achieved by obtaining again a lifting $F$:
$$\xymatrix{   & \dot\delta_n^c  \times \Delta[2] \cup  \delta_n^c\times \Lambda^0[2]\ar[d]_{\incl} \ar[rrrrr]^-{H\circ((u_0,...,u_n)\times s^1)\cup (-,H\circ(z\times 1),G) }    &  & && &  X  \ar[d]^p   \\
\delta_n^c  \times \Delta[1] \ar[r]^-{1\times d^0} &  \delta_n^c  \times \Delta[2]\ar@{.>}[rrrrru]^F  \ar[rrrrr]_-{\ct_{p(x)}} & & & & &   B \rlap{\,.} }
$$

The composite $F\circ (1\times d^0)$ is now a homotopy from $x$ to $\alpha(z)$. The restriction to the boundary is
$F\circ (1\times d^1)|_{\dot\delta_n^c\times\Delta[1]}=H\circ (u\times s^1) \circ (1\times d^0)  = H\circ (u\times d^0s^0)$, which is the constant homotopy of
$H_1\circ u$, the characteristic map of $\alpha(u)$, and then $\dot x = \alpha(\dot z) = \alpha(u)$. Let $y\in\Sigma$ be the generator of the orbit of the simplex
$\alpha(z)$ in $X$. We have shown that $y\preceq x$, and hence $y=x$ since both are in $\Sigma$. Since $\calc$ is an $EI$-category, there is an isomorphism $f\in\calc$ such that $f(x)=\alpha(z)$. Hence, $x= \alpha(f^{-1}z) \in \Im (\alpha)$.

\noindent\textbf{Step 2. }
If $z$ and $w$ are two $n$-simplices of $Z$ such that $\alpha(z)=\alpha(w)$, then $\beta(z)\simeq_p \beta(w)$.

By the induction hypothesis, $z$ and $w$ must have the same boundary. Since $\alpha(z)=\alpha(w)$, also $\alpha(d_iz)=\alpha(d_iw)$ for all $i=0,\dots,n$. But
$\alpha$ is injective in dimension $n-1$, and thus $d_iz=d_iw$ for all $i=0,\dots,n$, that is, $\dot z = \dot w$.

As in Step 1, let $H$ be a fibrewise homotopy from $\beta$ to $\alpha$. Then, $H\circ(w\times 1)$ is a homotopy from $\beta(w)$ to $\alpha(w)$ (or rather the
corresponding characteristic maps). Similarly, $H\circ(z\times 1)$ is a homotopy from $\beta(z)$ to $\alpha(z)$. But $\alpha(w)=\alpha(z)$, so we will combine both homotopies in order to get another one between $\beta(w)$ and $\beta(z)$.

Notice that $H\circ(w\times 1)$ and $H\circ(z\times1)$ coincide over the boundary $\dot w= \dot z$, and denote by
$$h = H\circ(w\times 1)|_{\dot\delta_n^c \times \Delta[1]} =  H\circ(z\times1)|_{\dot\delta_n^c \times \Delta[1]} $$

the restriction. We have a commutative diagram of solid arrows
$$\xymatrix{ & \delta_n^c \times \Lambda^2[2] \cup \dot\delta_n^c \times \Delta[2]   \ar[d]_\incl \ar[rrrrr]^-{ ( H\circ(z\times1), H\circ( w\times1),-   ) \bigcup  h\circ (1\times s^0) } &&&&& X \ar[d]^p\\
  \delta_n^c \times \Delta[1] \ar[r]^{(1\times d^2)} & \delta_n^c \times \Delta[2] \ar@{.>}[rrrrru]^G\ar[rrrrr]_{\ct_{p(z)} = \ct_{p(w)} }  &&& &&   B
}
$$ and so, by Corollary~\ref{L:Lifting-Property} and the fact that $p:X\rightarrow B$ is a fibration, there is a lift $G$ and the composition $G\circ (1\times d^2)$ is a $p$-homotopy from $\beta(w)$ to
$\beta(z)$.

\noindent\textbf{Step 3. } The map $\alpha$ is injective in dimension $n$.

Let $x$ and $y$ be $n$-simplices of $Z$ such that $\alpha(x)=\alpha(y)$. Since $\beta\colon Z\Right0{} X$ is an isomorphism, $Z$ is also a free $\calc$-diagram with
base set $\Sigma(Z)= \beta^{-1}(\Sigma(X))$. Then, there are  $x_0, y_0\in \Sigma(Z)$ and $f,g$ in $\calc$ such that
$$f(x_0) =x\quad \text{and} \quad g(y_0)=y.$$

Notice that $f(\alpha(x_0)) = \alpha(x)=\alpha(y)= g(\alpha(y_0))$, so there exists $w\in \Sigma(X)$ and morphisms
$h, l$ in $\calc$ such that
$$h(w) = \alpha(x_0),\textrm{ } l(w) = \alpha(y_0)   \quad \text{and} \quad  f\circ h =  g\circ l\,.$$

By Step~1, $\alpha$ is surjective in dimension $n$, so there exists $v\in Z$ such that $\alpha(v)=w$, and thus we have $\alpha(h(v)) = h(\alpha(v))=h(w) = \alpha(x_0)$ and
$\alpha(l(v)) = l(\alpha(v))= l(w) = \alpha(y_0)$. Therefore, we can apply Step~2 in order to conclude that
$$
 \beta(h(v) )  \simeq_p   \beta(x_0)  \quad \text{and} \quad  \beta(l(v))  \simeq_p  \beta(y_0)\,.
$$

Now, there is also a simplex $u\in \Sigma(X)$ and a morphism $r$ in $\calc$ such that $r(u) = \beta(v) $. Then $h(r(u)) = h(\beta(v)) = \beta(h(v)) \simeq_p g(x_0)$, and
also $l(r(u)) = l(\beta(v)) = \beta(l(v)) \simeq_p \beta(y_0)$, so we have shown that
$$
u\preceq \beta(x_0)  \quad \text{and} \quad   u\preceq \beta(y_0)\,.
$$

However, $x_0, y_0\in \Sigma(Z)$, and hence the three simplices $u, \beta(x_0), \beta(y_0)$ are in $\Sigma(X)$. Since $p\colon X\Right0{}B$ is minimal,
$\beta(x_0)= u =\beta(y_0)$ and $h\circ r = l\circ r$ must be an endomorphism, and then an automorphism of $\calc$. Furthermore, $x_0=y_0$ since $\beta$ is
isomorphism. Finally,  we have $f\circ(h\circ r) =  (f\circ h)\circ r = (g\circ l)\circ r = g\circ(l \circ r) $, thus $f = g$. So we obtain
$x= f(x_0) = g(y_0)= y$.
\end{proof}

\begin{Nota}
 It is not true in general that a minimal fibration in $\mathcal{S}^{\mathcal{C}}$ is minimal
objectwise, that is, if $p\colon X\Right0{} B$ is a minimal fibration of $\calc$-diagrams,
then it is not always true that
for every object $a$ of $\calc$, $p_a\colon X_a\Right0{} B_a$
is a minimal fibration in $\mathbf{S}$. The following is an illustrative example.
Consider the category $\mathcal{C}=\{a\Right0{f}  b\Left0{g}c\}$ and
a minimal fibration $p\colon X\Right0{} B$, where $X$ is a free diagram. In this case
$p_a\colon X_a\Right0{} B_a$ and $p_c\colon X_c\Right0{} B_c$ are minimal, but there might exist generators $z\in X_a$ and
$w\in X_c$ such that $f(z)\neq g(w)$ and $f(z)\simeq_{p_b} g(w)$, so $p_a\colon X_a\Right0{} B_a$ would not be a minimal fibration of
simplicial sets.
\end{Nota}

\begin{Cor}\label{C:Isomorphism between minimal fibrations} Any two minimal fibrations over a diagram $B$ that are fibrewise homotopy equivalent, are isomorphic.
\end{Cor}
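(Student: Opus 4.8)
The plan is to obtain this as an immediate consequence of Proposition~\ref{P:strong-isom-min.f}. Let $p\colon X\Right0{} B$ and $p'\colon X'\Right0{} B$ be two minimal fibrations of $\calc$-diagrams, and suppose they are fibrewise homotopy equivalent; that is, there is a map $\varphi\colon X\Right0{} X'$ over $B$ together with a map $\psi\colon X'\Right0{} X$ over $B$ such that $\psi\circ\varphi$ is fibrewise homotopic to $\Id_X$ and $\varphi\circ\psi$ is fibrewise homotopic to $\Id_{X'}$, all maps and homotopies being over $B$.

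First I would apply Proposition~\ref{P:strong-isom-min.f} to the minimal fibration $p\colon X\Right0{} B$, taking $Z=X$ and the pair of fibrewise homotopic maps $\alpha=\psi\circ\varphi$ and $\beta=\Id_X$. Since $\beta=\Id_X$ is an isomorphism and $p$ is minimal (so in particular $X$ is free, which is the hypothesis that must be checked), the proposition gives that $\psi\circ\varphi$ is an isomorphism of $\calc$-diagrams. Running the symmetric argument for the minimal fibration $p'\colon X'\Right0{} B$ with $Z=X'$, $\alpha=\varphi\circ\psi$, and $\beta=\Id_{X'}$, we conclude likewise that $\varphi\circ\psi$ is an isomorphism.

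It then only remains to note that $\varphi$ has the left inverse $(\psi\circ\varphi)^{-1}\circ\psi$ and the right inverse $\psi\circ(\varphi\circ\psi)^{-1}$; a morphism of $\calc$-diagrams admitting both a left and a right inverse is an isomorphism, and the two inverses necessarily coincide. Hence $\varphi$ is an isomorphism of $\calc$-diagrams over $B$, so the two minimal fibrations are isomorphic. There is no real obstacle here: all the genuine work is contained in Proposition~\ref{P:strong-isom-min.f}, and the only point requiring attention is the verification that its hypotheses hold, namely that $X$ and $X'$ are free (which is built into the definition of a minimal fibration, Definition~\ref{D:Minimal fibration-diagrams}) and that $\calc$ is an $EI$-category, as assumed throughout this part of the section.
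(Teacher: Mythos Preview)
Your proof is correct and follows essentially the same approach as the paper's own proof: apply Proposition~\ref{P:strong-isom-min.f} to each composite $\psi\circ\varphi$ and $\varphi\circ\psi$ (with the identity playing the role of $\beta$) and conclude that $\varphi$ is an isomorphism. The only difference is cosmetic: you spell out the left-inverse/right-inverse argument explicitly, whereas the paper simply states that both $\varphi$ and $\psi$ are isomorphisms.
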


\begin{proof}
Let $p:X\rightarrow B$ and $q:Y\rightarrow B$ be minimal fibrations over $B$ and let
$\alpha\colon X\rightarrow Y$ and $\beta\colon Y\rightarrow X$ be fibrewise homotopy inverses.
The composition $\beta\circ\alpha$ is then fibrewise homotopic to the identity map of $X$,
and so Proposition \ref{P:strong-isom-min.f} implies that $\beta\circ\alpha$ is an isomorphism.
A similar argument shows that $\alpha\circ\beta$ is also an isomorphism, hence both $\alpha$ and
$\beta$ are isomorphisms over $B$.
\end{proof}

We end this section with an example showing how Theorem~\ref{T:const-min-fibrat(A)}
fails on non-artinian index categories.

\begin{Ej}\label{ExampleNop}
 Write $\caln$ for the poset of non-positive integers
 $\con{\dots < n< n+1 < \dots <-1<0}$. A $\caln$-diagram $X$ is a sequence
 of spaces and maps
$$\cdots \Right2{f_n} X_{n} \Right2{f_{n+1}} X_{n+1} \Right2{f_{n+2}} \cdots \Right2{f_{-1}} X_{-1} \Right2{f_0}  X_0 \,.$$
Assume that each $X_i$ is non-empty, fibrant and connected, and that each $f_n$ is injective. Then
\begin{enumerate}
\item $X$ is a fibrant and it is a free diagram if and only if each simplex has only a finite number of preimages.
\item $X$ is never minimal and it does not contain a minimal retract.
\end{enumerate}

In any case $X$
is fibrant since each $X_i$ is a fibrant simplicial set.

Define $\Sigma$ as the union of the sets $\Sigma_n$, $n\leq 0$, where $\Sigma_n$ is the set of simplices of $X_n$
that do not belong to the image of $f_n\colon X_{n-1} \Right0{} X_n$. If every simplex has a finite number of preimages
in the diagram, then every simplex is the image of an element of $\Sigma$ by a morphism of the index category; as since
each $f_i$ is injective, $\Sigma$ is a base  and $X$ is a free diagram.
If for some $n\leq 0$ there is a simplex
$x\in X_n$ which is in the image of $f_n\circ \dots \circ f_m$ for all $m< n$, then $X$ is not free.
If it was free with base $\Sigma'$, then there would be  a generator  $y\in \Sigma'$ with
$f_n\circ \dots \circ f_{r+1}(y)=x$, for $y\in X_r$. In this situation we could find a preimage $x'$ of $x$ in $X_{r-1}$, and hence
there would be another generator
$z\in \Sigma'$, $z\in X_s$ for some $s>r$ such that $f_{r-1}\circ \dots \circ f_{s+1}(z)=x'$. But, since each $f_i$ is injective,
we would also have $f_n\circ \dots \circ f_{r}\circ\dots \circ f_{s+1}(z)= x$
and this would contradict uniqueness of generators mapping to a fixed simplex.

 Assuming that $X$ is free, we show that it cannot be minimal.
Suppose on the contrary that it was minimal.
A base $\Sigma$ must contain 0-simplices, so for a certain $n$ there exists a
0-simplex $x$ in $\Sigma_n$. But since the precedent $X_i$, $i<n$,
are non-empty, there must be another $m<n$ and a 0-simplex $y$ in $\Sigma_m$. Since
$f_n\circ \dots \circ f_{m+1}(y)\in X_n$, and $X_n$ is connected, we have $y\preceq x$
and this would imply $x=y$, that contradicts $m<n$.
\end{Ej}

\section{$\mathcal{C}$-twisted cartesian products and $\mathcal{C}$-fibre bundles\label{S:C-fibre bundles}}
This section contains the proof of Theorem~\ref{T:Classif-C-fibre(B)}.
After generalizing the concepts of twisted cartesian product (TCP) and fibre bundle to the category of $\calc$-diagrams,
we show that the classification of $\calc$-fibre bundles reduces to the classification of principal $G$-fibre bundles,
always with constant base $\calc$-diagram.
The theory of minimal fibrations developed in Section~\ref{MinFib}  applies then to show that the classification of fibrations of
$\calc$-diagrams over constant base, where $\calc$ is an artinian EI-category, reduces to that of $\calc$-fibre bundles
over $B$.

Recall that a  simplicial group $G$ is a functor $G\colon \Delta\op\Right0{}\gr$. It can be seen as a sequence
of groups $\{G_n\}_{n\geq0}$ together with face and degeneracy operators subject to the same
relations~\eqref{simprel} as simplicial sets.
The composition
with the underlying functor $\calu\colon\gr\Right0{}\Sets$ gives the underlying simplicial set of a
simplicial group.

\begin{Def}
\label{leftaction} Let $\calc$ be a small category.
A \emph{left action} of a simplicial group $G$ on a $\calc$-diagram $F$ is a map of $\calc$-diagrams
\begin{equation*}
 t \colon G\times F \Right2{} F
\end{equation*}
denoted $t(g,x)=g\cdot x$, that satisfies the usual axioms for a group action, namely,
for any $n\geq0$, $c\in \Ob(\calc)$, $g,h\in G_n$ and
$x\in F_{c,n}$, $g\cdot(h\cdot x)= (gh)\cdot x$ and $e_n\cdot x=x$ if $e_n$
denotes the unit of $G_n$.
\end{Def}

Given a $\calc$-diagram $F$, the simplicial group of automorphisms of $F$ is the subsimplicial set $\aut_\calc(F)$ of $\map_\calc(F,F)$ which consists of the commutative diagrams
$$\xymatrix{ F\times \Delta[n]\ar[rr]^{\5\varepsilon} \ar[dr]_{pr} &&
                 F\times \Delta[n]\ar[dl]^{pr}  \\      & \Delta[n]   }
$$
in which the horizontal map is invertible. It clearly becomes a simplicial group that acts on $F$ by evaluation:
\begin{equation*}
   \ev\colon \aut_\calc(F) \times F \Right2{} F\,.
\end{equation*}
That is, given $\5\varepsilon$ and a simplex $x\colon \delta_n^c\Right0{}F$, $\ev(\5\varepsilon,x) =
\varepsilon (x,\iota_n) $, where $\varepsilon$ is the composite of $\5\varepsilon$ and the projection
$F\times\Delta[n]\Right0{}F$.

Sometimes we express the action in its adjoint form $\rho\colon G\Right0{}\autc(F)$.
 Notice that in that way, an element $g\in G_n$ determines an isomorphism of $\calc$-diagrams
 $\rho(g)\colon F\times\Delta[n]\Right0{}F\times\Delta[n]$ over $\Delta[n]$. Now, given $g\in G_n$ and $
 x\in F_n$ with characteristic maps $g\colon \Delta[n]\Right0{}G$ and $x\colon \delta_n^c\Right0{}F$, the action
 $g\cdot x$ is characterized by  the composition
 $$
    \delta_n^c \Right2{d}  \Delta[n] \times  \delta_n^c \Right2{g\times x}  G\times F  \Right2{\cdot} F\,.
 $$

The classical notion of twisting function will be crucial in the sequel.
\begin{Def}[{\cite[\S18]{May}}]
\label{twisting}
Let $G$ be a simplicial group and  $B$ a simplicial set. Then
a \emph{twisting function} $t\colon B\Right0{} G$ is a collection of functions $\{t_{n}\colon B_{n}\Right0{} G_{n-1}\}_{n\geq1}$ that satisfy:
\begin{align*}
 d_it_{n+1}(v) & =t_{n}(d_{i+1}v), & & i\geq 1, \\
 s_it_{n}(v)      &  =t_{n+1}(s_{i+1}v), & &  i\geq 0,\\
 d_0t_{n+1}(v)   & =   [t_{n}(d_0v)] ^{-1}\cdot t_{n}(d_1v) ,\\
 t_{n+1}(s_0v)& =e_{n}\,,
\end{align*}
for all $n\geq1$.
\end{Def}

Now we have the ingredients we needed in order to define $\mathcal{C}$-twisted
 cartesian products:

 \begin{Def}\label{D:C-TCP} Let  $G$ be a simplicial group and $F$  a $\mathcal{C}$-diagram  with a left action of $G$.
 Let $B$ be a simplicial set and $t\colon B\Right0{} G$  a twisting function, then the \emph{$\mathcal{C}$-twisted
 cartesian product} $B\times_t F$ is the
$\mathcal{C}$-diagram with simplices
$$
      (B\times_t F)_{c,n} = B_n \times F_{c,n}
$$
and structural operators
\begin{align*}
 d_0(b,x)  & = (d_0(b) , t(b)\cdot d_0(x)) \\
 d_i(b,x)     &  =   (d_ib , d_ix )   , & &  i\geq 1,\\
s_i (b,x) & = (s_ib, s_ix)    , & &  i\geq 0 \,,\\
f(b,x) & = (b,f(x))\,.
\end{align*}

\end{Def}

For short, we will frequently refer to a $\mathcal{C}$-twisted
 cartesian product simply as a $\mathcal{C}$-TCP.

Once we have extended the concept of twisted cartesian product, we have to deal with fibre bundles. To develop this notion in the context of
$\mathcal{C}$-diagrams of spaces we will focus on maps $p\colon X\Right0{} B$ of $\mathbf{S}^{\mathcal{C}}$, where $B$ is a constant diagram to the
simplicial set $B$. Under this assumption, the classical concepts about local triviality, atlases and structural group (see \cite[11.8]{May}) are generalized, in such a
way that we can recover again the theory of $\calc$-twisted cartesian products over a constant base space.

\begin{Def}\label{D:C-fibre-b} Let $F$ be a $\mathcal{C}$-diagram and $B$ a simplicial set. A map $p\colon X\Right0{} B$ in $\mathbf{S}^{\mathcal{C}}$, where
$B$ is regarded as a constant diagram, will be called a \emph{$\mathcal{C}$-fibre bundle} with fibre $F$ if $p$ is an epimorphism and for every n-simplex $v\in B$
there exists an isomorphism of $\calc$-diagrams, $\alpha_p(v)\colon \Delta [n]\times F\Right0{} \Delta [n]\times_{B} X$, such that the following diagram commutes:
\index{$\mathcal{C}$-fibre bundle}
\begin{equation}\label{alpha-C-atlas}
 \xymatrix{  \Delta [n]\times F\ar[r]^(0.5){\alpha_p(v)}_{\cong}\ar[dr]_{\proj}  &  \Delta [n]\times_{B} X\ar[r]^(0.65){\5{v}}\ar[d]^{\5p}   &    X\ar[d]^p      \\
                                   &  \Delta [n]\ar[r]_{v}                   &    B\ar@{}[ul]|(0.2){\lrcorner}\rlap{\,.}  }
\end{equation}

A set of isomorphisms $\conj{\alpha_p(v)}{v\in B_n\,,\ n\geq0}$ making commutative
the diagram \eqref{alpha-C-atlas}
for each simplex $v$ of $B$,  is called a \emph{$\mathcal{C}$-atlas} for $p$.
\end{Def}

Set also $\beta_p(v) = \5v\circ \alpha_p(v)$.
Notice that the elements $\alpha_p(v)$ are isomorphisms over $\Delta[n]$,
while $\beta_p(v)\in \mapc(F, X)_n$ are injections such that the diagram
\begin{equation}\label{beta-C-atlas}
\xymatrix{  \Delta [n]\times F\ar[r]^-{\beta_p(v)}\ar[d]_{\proj}  &    X\ar[d]^p      \\
                                  \Delta [n]\ar[r]_{v}                   &    B  }
\end{equation}
is a pullback diagram of $\calc$-diagrams.

Conversely, given a map $\beta_p(v)$ making
diagram~\eqref{beta-C-atlas} commutative and a pullback diagram, there is a unique
factorization  $\beta_p(v) =  \5v\circ \alpha_p(v)$, where
$\alpha_p(v) \colon  \Delta [n]\times F\Right0{}\Delta [n]\times_{B} X$
is an isomorphism making \eqref{alpha-C-atlas} commutative, since the right
hand square of diagram \eqref{alpha-C-atlas} is a pullback diagram.

It follows that sets of maps  $\{\alpha_p(v)\}$ and
$\{\beta_p(v)\}$ determine each other.  When talking about atlases we will choose
$\{\alpha_p(v)\}$ or $\{\beta_p(v)\}$ depending on the context, without further
explanation when there is no possible confusion.

Given two atlases $\{\alpha_p(v)\}$ and
$\{\widetilde{\alpha}_p(v)\}$ of $p$, $\alpha_p(v)^{-1}\widetilde{\alpha}_p(v)\in\autc(F)_n$,
and conversely, if for every $v\in B_n$ we choose
$(\gamma(v))\in \autc (F)_n$, then $\{\alpha_p(v)\gamma(v)\}$ is another atlas.

In the sequel we translate to the context of diagrams some classical notions of Bundle Theory.

\begin{Def}\label{normalatlas}
In the previous notation, the atlas is \emph{normal} provided
$\beta_p(s_iv)=s_i\beta_p(v)\in \Hom_\calc(F,X)_{n+1}$,
for each simplex $v$ of $B_n$,  $0\leq i\leq n$.
\end{Def}

Notice that this equality holds exactly when
$\alpha_p(v)\circ (s^i\times 1) = (\5s^i\times1)\circ \alpha_p(s_iv)$.
In other words, the top left square of the following diagram commutes:
$$
    \xymatrix{    &  \Delta [n]\times F\ar[r]^(0.55){\alpha_p(v)}_(0.5){\cong}
                                     & \Delta [n]\times_B X\ar[r]\ar@{.>}[d]  &  X\ar[d]^p  \\
              \Delta [n+1]\times F\ar[ur]^{s^{i}\times 1}\ar[r]^{\alpha_p(s_iv)}_(0.5){\cong}  &  \Delta [n+1]\times_B X\ar[ur]^(0.4){\widetilde{s}^i\times 1}\ar[urr]\ar[d]   & \Delta [n]\ar[r]^{v} &  B.  \\
                  &  \Delta [n+1]\ar[ur]^{s^i}\ar[urr]_{s_iv}     &     &     }
$$
If  $\{\beta_p(v)\}$ is an atlas, we can always redefine it
over the degenerate simplices using the following definition dimensionwise
$$
\begin{cases}
  \beta'_p(s_iv):=s_i\beta_p(v) \,,   & v\in B_n\,,\ 0\leq i\leq n\,, \\
  \beta'_p(v):=\beta_p(v) \,,    & \text{if $v$ is non-degenerate,}
\end{cases}
$$
so it is shown by induction on $n$ that the new atlas
$\{\beta'_p(v)\}$ is well-defined and normal. Indeed,
it is clearly well-defined on $0$-simplices. Assume that it is well-defined on $n$-simplices and that
we have two expressions $s_iv=s_jv'$ for a degenerate simplex of dimension $n+1$. It follows
that $v'= s_id_j(v)$ and then $s_j\beta_p(v') = s_i\beta_p(v)$.
Thus, upon replacing $\{\beta_p(v)\}$ by  $\{\beta'_p(v)\}$ we can
assume that a given atlas is normal.
(cf.~\cite[pg.~648]{Barrat}, \cite[6.5]{Curtis}, \cite[\S19]{May}).

We turn now our attention to face operators. In general
$\beta_p(d_iv)$ and $d_i\beta_p(v)$ will differ by an automorphism
$\xi_p^i(v)\in\autc(F)_{n-1}$,  $d_i\beta_p(v)  =  \beta_p(d_iv)\circ \xi_p^i(v)$,  as it is shown
in the following commutative diagram:
\begin{equation}\label{G-atlas}
 \xymatrix{     &   \Delta [n-1]\times F\ar[r]^{d^{i}\times 1}    \ar@{.>}[ld]_{\xi^i_p(v) }
                      &  \Delta [n]\times F\ar[r]^(0.55){\alpha_p(v)}_(0.5){\cong}
                      &  \Delta [n]\times_B X\ar[r]\ar[d] |\hole&  X\ar[d]^p  \\
              \Delta [n-1]\times F\ar[rr]^{\alpha_p(d_iv)}_(0.5){\cong} &
              &  \Delta [n-1]\times_B X\ar[ur]^(0.4){\widetilde{d}^i\times 1}\ar@/_1.5mm/[urr]
                     \ar[d]   & \Delta [n]\ar[r]^{v} &  B.  \\
                    &            &  \Delta [n-1]\ar[ur]^{d^i}\ar@/_1.5mm/[urr]_{d_iv}  & & }
\end{equation}

We will refer to $\conj{\xi_p^i(v)}{v\in B_n\,,\ 0\leq i\leq n}$ as the set of \emph{transformation elements} associated to the atlas $\{\alpha_p(v)\}$.
The following notions generalize classical properties of the atlases:

\begin{Def}[{\cite[IV.2.4]{Barrat},\cite[6.5]{Curtis},\cite[19.1]{May}}]
\begin{enumerate}[\rm (1)]
\item An atlas $\{\alpha_p(v)\}$ is \emph{regular} if for every $v\in B_n$ and $1\leq i\leq n$,
$\xi_p^i(v)=e_{n-1}$, where $e_{n-1}$ is the identity element of $\autc(F)_{n-1}$.
\item Assume that $G$ is a subgroup complex of $\autc(F)$.
We will say that $\{\alpha_p(v)\}$ is a \emph{$G$-atlas} if for each simplex $v$ of
$B$, $\xi^i_p(v)\in G$.
\item Two $G$-atlases $\{\alpha_p(v)\}$ and  $\{\widetilde{\alpha}_p(v)\}$ are
\emph{$G$-equivalent} if for each $v\in  B$,
there  exists $\gamma(v)\in G$ such that $\widetilde{\alpha}_p(v)=\alpha_p(v)\gamma(v)$.
\end{enumerate}
\end{Def}

Now we can finally define $G$-$\calc$-fibre bundles:

\begin{Def}
A \emph{$G$-$\calc$-fibre bundle} is a
$\calc$-fibre bundle together with a $G$-equivalence class of $G$-atlases.
We call $G$ the \emph{structural group} of the $\calc$-fibre bundle.
\end{Def}

Notice that the regularity condition $\xi_p^i(v)=e_{n-1}$ is equivalent to
$\beta_p(d_iv)=d_i\beta_p(v)$, for $i>0$.

In order to work with nice atlases in our $G$-$\calc$-fibre bundles,
we need the following statements, that in turn generalize classical results with essentially the same proof.

\begin{Pro}[{\cite[2.5]{Barrat},\cite[6.6]{Curtis},\cite[19.2]{May}}]\label{reg. atlases}
Every equivalence class of $G$-atlases contains a regular $G$-atlas.
\end{Pro}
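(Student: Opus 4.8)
The plan is to construct, from an arbitrary $G$-atlas $\{\alpha_p(v)\}$ in the given $G$-equivalence class, a new $G$-atlas $\{\alpha'_p(v)\}$ in the same class whose transformation elements $\xi_p^i(v)$ vanish for all $v\in B_n$ and all $1\le i\le n$. We may first invoke the normalization procedure described before Definition~\ref{normalatlas} to assume the atlas is normal, since that modification only alters $\beta_p$ on degenerate simplices by a degeneracy and hence stays in the same $G$-equivalence class (the adjustment is by elements $s_i(\text{id})=e$). The core of the argument is then an induction on dimension $n$, exactly as in \cite[2.5]{Barrat}, \cite[19.2]{May}: supposing $\gamma(v)\in G_n$ has been chosen for all simplices $v$ of dimension $<n$ so that the modified atlas $\{\alpha_p(v)\gamma(v)\}$ is regular in dimensions below $n$, one defines $\gamma(v)$ for $n$-simplices $v$ by a formula forcing the new transformation elements to be trivial.

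First I would write down the effect of replacing $\{\alpha_p(v)\}$ by $\{\alpha_p(v)\gamma(v)\}$ on the transformation elements. A direct computation with diagram~\eqref{G-atlas} (using the naturality relations~\eqref{Nat-cond} for the $\calc$-diagram structure and the fact that $\aut_\calc(F)$ acts fibrewise over $\Delta[n]$) shows that the new transformation element in face $i$ is $(d_i\gamma(v))^{-1}\,\xi_p^i(v)\,\gamma(d_iv)$, where $d_i\gamma(v)\in G_{n-1}$ is the $i$-th face of $\gamma(v)$ in the simplicial group $G$. Thus to achieve regularity in dimension $n$ we need, for each $n$-simplex $v$ and each $i\ge 1$,
$$
d_i\gamma(v) = \xi_p^i(v)\,\gamma(d_iv).
$$
For $i=0$ there is no constraint. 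The faces $d_1\gamma(v),\dots,d_n\gamma(v)$ are thereby prescribed, and the content of the induction step is to verify that these prescribed faces are mutually compatible — i.e.\ satisfy the simplicial identities $d_id_j=d_{j-1}d_i$ among themselves — so that they extend to an element $\gamma(v)\in G_n$ (one may take the missing $0$-face freely, e.g.\ using a degeneracy if $v$ is degenerate so that normality is preserved). Compatibility reduces to the cocycle-type relations satisfied by the $\xi_p^i(v)$, which follow from the simplicial identities applied to $\beta_p$ and from the inductive regularity hypothesis in lower dimensions; this verification is the one genuinely fiddly point, but it is the standard Barratt--Gugenheim--Moore computation transported verbatim, since all the extra $\calc$-diagram structure commutes with faces and degeneracies by~\eqref{Nat-cond}.

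It remains to check that $\{\alpha_p(v)\gamma(v)\}$ is again a $G$-atlas (the $\gamma(v)$ lie in $G$ by construction, so the new transformation elements $(d_i\gamma(v))^{-1}\xi_p^i(v)\gamma(d_iv)$ lie in $G$; in fact they are trivial for $i\ge1$), that it is an atlas at all (any right translation of an atlas by sections of $\aut_\calc(F)$ is an atlas, as recorded after Definition~\ref{D:C-fibre-b}), and that it is $G$-equivalent to the original one (immediate from the definition of $G$-equivalence, witnessed by the chosen $\gamma(v)$). Assembling the inductively constructed $\gamma(v)$ over all dimensions yields a global element of $G$ in each simplicial degree and the resulting atlas is regular by construction, completing the proof. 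The main obstacle, as noted, is purely bookkeeping: showing the prescribed faces $d_i\gamma(v)=\xi_p^i(v)\gamma(d_iv)$ are consistent; everything else is formal manipulation of the commutative diagrams already set up in this section.
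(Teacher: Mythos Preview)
Your proposal is correct and is precisely the classical Barratt--Gugenheim--Moore/May argument that the paper defers to; the paper gives no independent proof here, simply remarking that the result ``generalize[s] classical results with essentially the same proof,'' so your inductive construction of the $\gamma(v)$ via horn-filling in the Kan complex $G$ is exactly what is intended. One small point worth double-checking against the paper's conventions: with $\widetilde\alpha_p(v)=\alpha_p(v)\gamma(v)$ and $d_i\beta_p(v)=\beta_p(d_iv)\xi_p^i(v)$ as defined here, the new transformation element comes out as $\gamma(d_iv)^{-1}\xi_p^i(v)\,d_i\gamma(v)$ rather than $(d_i\gamma(v))^{-1}\xi_p^i(v)\gamma(d_iv)$, so the face condition reads $d_i\gamma(v)=\xi_p^i(v)^{-1}\gamma(d_iv)$ --- this is only a convention swap and does not affect the compatibility verification or the rest of your argument.
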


\begin{Pro}[{\cite[19.4]{May}}]\label{trans-elemts}
The transformation elements $\{\xi_p^0(v)\}$ of a regular $G$-atlas define a twisting function
$ \xi^0_p\colon B\Right0{}G$.
\end{Pro}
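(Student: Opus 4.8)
The plan is to verify directly the four defining identities of a twisting function (Definition~\ref{twisting}) for the family $t_n(v):=\xi^0_p(v)$, $v\in B_n$, $n\geq1$; these take values in $G$ precisely because we start from a $G$-atlas. First I would pass to a convenient atlas: by Proposition~\ref{reg. atlases} the given $G$-equivalence class contains a regular $G$-atlas $\{\beta_p(v)\}$, and (as in the classical theory, cf.~\cite[19.2]{May}) the regularization may be carried out so as to preserve normality; equivalently one normalizes first via the construction following Definition~\ref{normalatlas}. We may thus assume $\{\beta_p(v)\}$ is both regular and normal, so that $d_i\beta_p(v)=\beta_p(d_iv)$ for $i\geq1$, $\beta_p(s_iv)=s_i\beta_p(v)$ for $0\leq i\leq n$, and in general $d_0\beta_p(v)=\beta_p(d_0v)\circ\xi^0_p(v)$, where $\circ$ denotes the right action of $\autc(F)$ on $\mapc(F,X)$ by precomposition. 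The two tools used repeatedly are: the simplicial identities~\eqref{simprel} together with their consequences $d_id_0=d_0d_{i+1}$, $d_0d_1=d_0d_0$, $d_0s_{i+1}=s_id_0$, $d_0s_0=\Id$; and the facts that this action is simplicial, associative and unital and that each $\beta_p(v)$ is a monomorphism of $\calc$-diagrams, so that $\beta_p(u)\circ g=\beta_p(u)\circ h$ forces $g=h$.

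Then I would check the identities one at a time. For $d_it_{n+1}(v)=t_n(d_{i+1}v)$ with $i\geq1$: apply $d_i$ to $d_0\beta_p(v)=\beta_p(d_0v)\circ\xi^0_p(v)$, use $d_id_0=d_0d_{i+1}$ and regularity on both factors, and cancel $\beta_p(d_id_0v)=\beta_p(d_0d_{i+1}v)$. For $s_it_n(v)=t_{n+1}(s_{i+1}v)$: compute $d_0\beta_p(s_{i+1}v)$ in two ways, once through normality $\beta_p(s_{i+1}v)=s_{i+1}\beta_p(v)$ and $d_0s_{i+1}=s_id_0$, once through the defining equation at $s_{i+1}v$, and cancel $\beta_p(s_id_0v)$. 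For $d_0t_{n+1}(v)=[t_n(d_0v)]^{-1}\cdot t_n(d_1v)$: expand $d_0d_0\beta_p(v)=d_0d_1\beta_p(v)$; the left side gives, using the defining equation twice and associativity, $\beta_p(d_0d_0v)\circ\bigl(\xi^0_p(d_0v)\cdot d_0\xi^0_p(v)\bigr)$, and the right side, using regularity ($d_1\beta_p(v)=\beta_p(d_1v)$) and then the defining equation, gives $\beta_p(d_0d_1v)\circ\xi^0_p(d_1v)$; cancelling $\beta_p(d_0d_0v)$ yields $\xi^0_p(d_0v)\cdot d_0\xi^0_p(v)=\xi^0_p(d_1v)$, which is the asserted formula. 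Finally $t_{n+1}(s_0v)=e_n$: by normality $d_0\beta_p(s_0v)=d_0s_0\beta_p(v)=\beta_p(v)$, while the defining equation gives $d_0\beta_p(s_0v)=\beta_p(d_0s_0v)\circ\xi^0_p(s_0v)=\beta_p(v)\circ\xi^0_p(s_0v)$, and cancellation gives $\xi^0_p(s_0v)=e_n$.

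I expect the only real difficulty to be the bookkeeping: tracking which instance of~\eqref{simprel} is invoked, and checking carefully that the action of $\autc(F)$ on $\mapc(F,X)$ commutes with faces and degeneracies and is associative and unital, so that the cancellations through monomorphy are legitimate. Once that machinery is in place the argument is a direct transcription of the classical one (\cite[19.4]{May}); the extra structural morphisms $f\in\Mor(\calc)$ play no role, since $\beta_p(v)$ and $\xi^0_p(v)$ are already maps of $\calc$-diagrams and no $f$ enters the face and degeneracy formulas governing $\xi^0_p$.
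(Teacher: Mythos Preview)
Your proposal is correct and is precisely the classical verification of the twisting-function identities. The paper itself does not supply a proof of this proposition; it simply records the statement with a citation to \cite[19.4]{May}, so your direct check of the four axioms using regularity, normality, the simplicial identities, and cancellation through the monomorphisms $\beta_p(v)$ is exactly the argument being invoked.
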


From now on, given any $G$-$\mathcal{C}$-bundle we may suppose that we are dealing
with regular and normalized atlases. The following definition is a
generalization of the concept of $G$-map and $G$-equivalence (cf.~\cite[19.1]{May})
to the context of $G$-$\mathcal{C}$-fibre bundles.

\begin{Def}\label{defGequiv}
If $p\colon X\Right0{} B$ and $p'\colon X'\Right0{} B$
are $G$-$\mathcal{C}$-fibre bundles with fibre $F$, a $G$-\emph{map} from $p$ to $p'$ is a map of
$\mathcal{C}$-diagrams $h\colon X\Right0{} X'$ such that
given $G$-atlases $\{\alpha_p(v)\}$, $\{\alpha_{p'}(v)\}$ of $p$ and $p'$, respectively,
there exists $\gamma(v)\in G_n$ such that
$h\beta_p(v)=\beta_{p'}(v)\gamma(v)$; that is, the  following diagram commutes:
$$
\xymatrix{   &  \Delta [n]\times F\ar[r]^{\alpha_{p'}(v)}\ar@{..>}[ddl]
                  &  \Delta [n]\times_B X'\ar@{..>}[ddl]\ar[r]^(0.65){\widehat{v}'}
                  &  X'\ar[ddl]^{p'}  \\
      \Delta [n]\times F\ar[ur]^{\gamma(v)}\ar[d]\ar[r]^{\alpha_p(v)}
                      &  \Delta [n]\times_B X\ar[r]^(0.55){\widehat{v}}\ar[d]
                      &  X\ar[d]^p\ar[ur]^h     &      \\
       \Delta [n]\ar[r]_{-}     &  \Delta [n]\ar[r]_{v}      &  B     &   }
$$
Here we write $\beta_p(v) = \5v\circ \alpha_p(v)$ and $\beta'_p(v) = \5v'\circ \alpha'_p(v)$, as usual.

If $h$ is a natural isomorphism we will say that $p$ and $p'$ are \emph{$G$-equivalent.}
When $G= \autc(F)$, an isomorphism $h\colon  X\Right0{}X'$ over $B$ is automatically
an $\autc(f)$-equivalence.  In this case we say that the two $\calc$- fibre
bundles are equivalent, with no further mention of the structural group.
\end{Def}

Recall (cf.\ \cite[\S20]{May}) that two twisting functions
$t,t'\colon B\Right0{}G$ are $G$-equivalent if there is a degree-preserving function
$\gamma\colon B\Right0{}G$ such that
\begin{align}\label{eq-twisting-functions}
	t'(v)\cdot d_0\gamma(v) & =  \gamma(d_0v)\cdot  t(v) \,, \\
\label{eq-twisting-functions2}    d_i\gamma(v) & =  \gamma(d_iv)\,, & & i>0\,,\\
	\label{eq-twisting-functions3}	    s_i\gamma(v) & =  \gamma(s_iv)\,, & & i\geq0\,,
\end{align}
and this is an equivalence relation.

\begin{Pro}\label{fibrequivalent} Two $G$-$\calc$-fibre bundles are $G$-equivalent
if and only if the corresponding twisting functions are $G$-equivalent.
 \end{Pro}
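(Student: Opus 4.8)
The plan is to set up an explicit dictionary between $G$-$\calc$-fibre bundles (equipped with regular normalized atlases) and twisting functions $B\Right0{}G$, and then to check that a $G$-equivalence of bundles corresponds, under this dictionary, exactly to a $G$-equivalence of twisting functions. By Propositions~\ref{reg. atlases} and~\ref{trans-elemts} every $G$-$\calc$-fibre bundle $p\colon X\Right0{}B$ can be presented by a regular normalized $G$-atlas $\{\alpha_p(v)\}$, and its transformation element $\xi^0_p\colon B\Right0{}G$ is a twisting function. First I would observe that, given the twisting function $t=\xi^0_p$ and the left action of $G$ on $F$ obtained by restricting $\ev\colon \aut_\calc(F)\times F\Right0{}F$, the map of $\calc$-diagrams
$$
\Phi_p\colon B\times_t F \Right2{} X\,,\qquad \Phi_p(v,x) = \beta_p(v)(x,\iota_n)
$$
is a well-defined isomorphism over $B$; this is the $\calc$-diagram analogue of the classical statement (\cite[\S20]{May}) that a bundle with regular normalized $G$-atlas is isomorphic to the $\calc$-TCP determined by its twisting function, and the verification that $\Phi_p$ commutes with faces, degeneracies, and the $\calc$-structure maps is a routine computation using the normality and regularity relations together with Definition~\ref{D:C-TCP}. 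Thus a $G$-$\calc$-fibre bundle is, up to isomorphism over $B$, the same data as a twisting function $B\Right0{}G$, modulo the choice of representative atlas within the $G$-equivalence class.

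Next I would prove the forward implication. Suppose $h\colon X\Right0{}X'$ is a $G$-equivalence between $p$ and $p'$, so there is a degree-preserving $\gamma\colon B\Right0{}G$ with $h\circ\beta_p(v)=\beta_{p'}(v)\circ\gamma(v)$ for all $v\in B_n$, as in Definition~\ref{defGequiv}. Writing $t=\xi^0_p$ and $t'=\xi^0_{p'}$ for the associated twisting functions, I would extract relations \eqref{eq-twisting-functions}, \eqref{eq-twisting-functions2}, \eqref{eq-twisting-functions3} by comparing the two sides of $h\circ\beta_p(v)=\beta_{p'}(v)\circ\gamma(v)$ after applying face and degeneracy operators and using the defining identities $d_i\beta_p(v)=\beta_p(d_iv)\xi^i_p(v)$ (which for $i>0$ reads $d_i\beta_p(v)=\beta_p(d_iv)$ by regularity) together with the normality identity $s_i\beta_p(v)=\beta_p(s_iv)$, and similarly for $p'$. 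The case $i>0$ and the degeneracy case give \eqref{eq-twisting-functions2} and \eqref{eq-twisting-functions3} almost immediately because the $\xi^i$ for $i>0$ are trivial; the $i=0$ case produces \eqref{eq-twisting-functions} after matching the $d_0$-face formula of Definition~\ref{D:C-TCP} with the twisting-function identity $d_0\xi^0_p(v) = [\xi^0_p(d_0v)]^{-1}\cdot\xi^0_p(d_1v)$ from Definition~\ref{twisting}. Care is needed here because the group elements act on both the $B$-coordinate (through $t$) and, implicitly, must be tracked through the $F$-coordinate; this bookkeeping with the two faces $d_0$ and $d_1$ in the $i=0$ formula is where I expect the only genuine subtlety.

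For the converse, given a degree-preserving $\gamma\colon B\Right0{}G$ realizing a $G$-equivalence of the twisting functions $t$ and $t'$, I would define $h\colon B\times_t F \Right0{} B\times_{t'} F$ by $h(v,x) = (v,\gamma(v)\cdot x)$ and check, using \eqref{eq-twisting-functions}--\eqref{eq-twisting-functions3} and Definition~\ref{D:C-TCP}, that this is a well-defined map of $\calc$-diagrams over $B$; it is visibly invertible (inverse $(v,x)\mapsto(v,\gamma(v)^{-1}\cdot x)$), hence a $G$-equivalence of $\calc$-TCPs. Transporting $h$ across the isomorphisms $\Phi_p,\Phi_{p'}$ of the first paragraph yields a $G$-equivalence of the original bundles $p$ and $p'$, completing the proof. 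The main obstacle, as indicated, is not conceptual but lies in carrying out the $d_0$-face comparison in a way that is consistent with both the $\calc$-diagram structure maps (which, by \eqref{Nat-cond} and Definition~\ref{D:C-TCP}, act trivially on the $B$-factor) and the simplicial identities; once the normalized regular atlas is fixed this reduces to the same computation as in the classical simplicial case \cite[\S20]{May}, now carried out levelwise in $c\in\Ob(\calc)$.
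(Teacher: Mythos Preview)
Your proposal is correct and follows essentially the same route as the paper. The paper's forward direction is the same computation you outline: apply $d_0$ to the identity $h\circ\beta_2(v)=\beta_1(v)\circ\gamma(v)$, use the defining relation $d_0\beta_i(v)=\beta_i(d_0v)\circ t_i(v)$ on each side, and cancel the injective $\beta_1(d_0v)$ to obtain \eqref{eq-twisting-functions}; the other two conditions follow in the same way from regularity and normality. Note that you do not need the twisting-function identity $d_0 t(v)=[t(d_0v)]^{-1}t(d_1v)$ here---only the defining relation of $\xi^0_p$ as a transformation element---so that part of your sketch is a slight detour. For the converse the paper simply remarks that the same computation read in reverse shows the conditions on $\gamma$ are exactly what is needed for the formula $h(\beta_2(v)(\iota_n,x))=\beta_1(v)(\iota_n,\gamma(v)\cdot x)$ to be simplicial; your route via the explicit TCP isomorphisms $\Phi_p,\Phi_{p'}$ and Proposition~\ref{D:Map of C-TCP's} is equivalent and arguably cleaner, and indeed the paper packages that isomorphism separately as Proposition~\ref{psi-surj}.
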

\begin{proof}Let $p_1\colon X_1\Right0{} B$ and $p_2\colon X_2\Right0{} B$
be two $G$-$\calc$-fibre bundles. Choose $G$-atlases $\{\alpha_1(v)\}$ and
($\{\beta_1(v)\}$), and $\{\alpha_2(v)\}$ and ($\{\beta_2(v)\}$)  for
$p_1$ and $p_2$ respectively.

 If $p_1$ and $p_2$ are  $G$-equivalent, then by definition
 there is a function $\gamma\colon B\Right0{} G$ with
 $h\circ \beta_2(v) = \beta_1(v)\circ \gamma(v)$,
 for each simplex $v$ of $B$.
 Then, if $t_1$, $t_2$ denote the respective twisting functions, we have $\beta_i(d_0v)\circ t_i(v) =d_0(\beta_i(v))$,
 $i=1,2$. Hence,
\begin{align*}
 \beta_1(d_0v)\circ \gamma(d_0v)\circ t_2(v) & = h\circ \beta_2(d_0v)\circ t_2(v) \\
                           & = h\circ d_0(\beta_2(v)) \\
                            & = d_0(h\circ \beta_2(v)) \\
                           & = d_0\bigl( \beta_1(v)\circ \gamma(v)\bigr) \\
                           & = d_0( \beta_1(v)) \circ d_0(\gamma(v)) = \beta_1(d_0v)\circ t_1(v)\circ d_0(\gamma(v))\,,
\end{align*}
 and since $ \beta_1(d_0v)$ is injective $\gamma(d_0v)\circ t_2(v) = t_1(v)\circ d_0(\gamma(v))$.
 This is the first condition above \eqref{eq-twisting-functions} and the other two conditions follow similarly.
 Therefore, $t_1$ and $t_2$ are $G$-equivalent.

 Reading in reverse, the same argument proves that if $t_1$ and $t_2$ are $G$-equivalent, then $p_1$ and $p_2$ are
 $G$-equivalent $G$-$\calc$-fibre bundles.
 \end{proof}

The above notion of $G$-equivalence is translated to the case of $\calc$-TCP's as follows.

\begin{Def}
 Two $\calc$-TCP's, $B\times_tF$ and
	$B\times_{t'}F$, with
	structural group $G$ are
	$G$-\emph{isomorphic} if there is a function $\gamma\colon B\Right0{} G$ such that
	$h\colon B\times_tF \Right0{}B\times_{t'}F$ defined by $h(v,x) = (b, \gamma(v)x)$ is an isomorphism
	of simplicial sets.
\end{Def}

The twisting functions codify, in fact, the notion of $G$-isomorphism between $\calc$-TCP's:

\begin{Pro}\label{D:Map of C-TCP's}
	Two $\calc$-TCP's $B\times_tF$ and
	$B\times_{t'}F$ with
	structural group $G$ are
	isomorphic over $B$ if and only if the twisting functions
	$t$ and $t'$ are equivalent.
\end{Pro}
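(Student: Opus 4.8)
The plan is to reduce this statement to the already established equivalence in Proposition~\ref{fibrequivalent}, via the dictionary between $\calc$-TCP's and $G$-$\calc$-fibre bundles. First I would recall the two directions of this dictionary: a twisting function $t\colon B\Right0{} G$ produces the $\calc$-TCP $B\times_t F$ of Definition~\ref{D:C-TCP}, and the projection $B\times_t F\Right0{} B$ is a $G$-$\calc$-fibre bundle whose associated twisting function (in the sense of Proposition~\ref{trans-elemts}, after passing to a regular normal atlas) is exactly $t$ again; conversely, by Proposition~\ref{reg. atlases} every $G$-$\calc$-fibre bundle is, after choosing a regular normal $G$-atlas, isomorphic over $B$ to the $\calc$-TCP built from its twisting function $\xi^0_p$. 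This last identification is the $\calc$-diagram analogue of the classical fact that a regular normalized bundle atlas coordinatizes the total space as a twisted cartesian product; the face relation $d_0\beta_p(v) = \beta_p(d_0v)\circ\xi^0_p(v)$ together with regularity ($\xi^i_p(v)=e$ for $i>0$) and normality forces the structural operators on $X$ to agree, through the atlas, with those of $B\times_{\xi^0_p} F$ as written in Definition~\ref{D:C-TCP}.

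With this in hand the proof is essentially a translation. For the forward direction, suppose $h\colon B\times_t F\Right0{} B\times_{t'}F$ is a $G$-isomorphism over $B$, so $h(v,x)=(v,\gamma(v)\cdot x)$ for some degree-preserving $\gamma\colon B\Right0{} G$. Under the dictionary $h$ is precisely a $G$-map of the associated $G$-$\calc$-fibre bundles in the sense of Definition~\ref{defGequiv}, with the same coordinatizing function $\gamma$; hence by (the proof of) Proposition~\ref{fibrequivalent} the twisting functions $t$ and $t'$ are $G$-equivalent, i.e.\ they satisfy \eqref{eq-twisting-functions}, \eqref{eq-twisting-functions2} and \eqref{eq-twisting-functions3}. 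Conversely, if $t$ and $t'$ are equivalent via $\gamma$, one defines $h(v,x)=(v,\gamma(v)\cdot x)$ and checks directly from Definition~\ref{D:C-TCP} that $h$ commutes with all four families of structural operators: the identities $d_i\gamma(v)=\gamma(d_iv)$ for $i>0$ and $s_i\gamma(v)=\gamma(s_iv)$ handle the ordinary face and degeneracy operators and (because $\gamma(v)$ acts by a map \emph{of} $\calc$-diagrams) the morphisms $f\in\Mor(\calc)$, while the twisted relation $d_0(v,x)=(d_0v, t(v)\cdot d_0 x)$ combined with $t'(v)\cdot d_0\gamma(v)=\gamma(d_0v)\cdot t(v)$ gives compatibility with $d_0$. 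Since $\gamma(v)\in G_n$ is invertible, $h$ is an isomorphism, so the two $\calc$-TCP's are $G$-isomorphic.

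The main obstacle is not the formal bookkeeping but making precise the equivalence of categories between $G$-$\calc$-fibre bundles (with regular normal $G$-atlases) and $\calc$-TCP's, which is what lets me invoke Proposition~\ref{fibrequivalent} verbatim; concretely this means verifying that the total space of a $\calc$-fibre bundle with a regular normal $G$-atlas is isomorphic over $B$, as a $\calc$-diagram, to $B\times_{\xi^0_p}F$, and that this isomorphism is natural enough that a $G$-map of bundles corresponds to a $G$-isomorphism of the associated $\calc$-TCP's and vice versa. Once that correspondence is stated (it is the exact $\calc$-diagram analogue of \cite[\S19--\S20]{May}, obtained by the same argument applied objectwise and respecting the $\calc$-action because $G\subseteq\autc(F)$ acts through maps of $\calc$-diagrams), the proposition follows immediately from Proposition~\ref{fibrequivalent} with no further computation.
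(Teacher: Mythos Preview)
Your proof is correct, but it is considerably more elaborate than what the paper actually does. The paper's proof is a single sentence: a function $\gamma\colon B\to G$ satisfies the three equivalence conditions \eqref{eq-twisting-functions}--\eqref{eq-twisting-functions3} if and only if the map $h(v,x)=(v,\gamma(v)x)$ is simplicial. That is exactly the direct verification you carry out in your converse direction, and it works equally well in the forward direction---if $h$ is a $G$-isomorphism then by definition it has the form $h(v,x)=(v,\gamma(v)x)$, and compatibility of $h$ with $d_0$, $d_i$ $(i>0)$, and $s_i$ forces each of the three conditions on $\gamma$. There is no need to pass through the bundle dictionary or to invoke Proposition~\ref{fibrequivalent}.

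What your detour buys is a conceptual placement of the result inside the bundle framework, at the cost of having to make precise the equivalence you describe in your final paragraph (that a $\calc$-TCP \emph{is} a $G$-$\calc$-fibre bundle with regular normal atlas and twisting function $t$, compatibly with the notions of $G$-map). In the paper's logical flow this is unnecessary circularity: Proposition~\ref{fibrequivalent} and the present proposition are proved independently by parallel direct computations, and only afterwards combined in Theorem~\ref{Tveo}. Your approach is valid, but the paper's is the two-line computation you already wrote down for one direction.
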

 \begin{proof} It is easily checked that a function $\gamma\colon B\Right0{} G$ satisfies
 conditions \eqref{eq-twisting-functions},  \eqref{eq-twisting-functions2}, and
 \eqref{eq-twisting-functions3} if and only if $h\colon B\times_tF\Right0{}B\times_{t'}F$
 defined as $h(v,x) =(v,\gamma(v)x)$ is simplicial.
\end{proof}

\begin{Pro}\label{psi-surj}
Let $p\colon X\Right0{} B$ be a $\mathcal{C}$-fibre bundle with fibre $F$ and regular $G$-atlas $\{\alpha_p(v)\}$. Then the transformation
elements $\{\xi^0_p(v)\}$ define a $\mathcal{C}$-twisting map $\xi^0_p\colon B_n\Right0{} G_{n-1}$ and thereby $B\times_{\xi^0_p} F$ becomes a
$\mathcal{C}$-$TCP$ with fibre $F$ and group $G$. Furthermore there is an isomorphism $h\colon B\times_{\xi^0_p} F\Right0{} X$ of $\mathcal{C}$-fibre bundles
with group $G$.

\end{Pro}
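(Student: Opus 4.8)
The first assertion follows at once from the preceding material: Proposition~\ref{trans-elemts} says that $\xi^0_p=\{\xi^0_p(v)\}$ is a twisting function $\xi^0_p\colon B\to G$, and since $G$ is a subgroup complex of $\autc(F)$ it acts on $F$ by restriction of the evaluation action, so Definition~\ref{D:C-TCP} produces the $\calc$-TCP $B\times_{\xi^0_p}F$ with fibre $F$ and group $G$. Before constructing $h$, I would replace the given atlas by an equivalent one so that, in addition to being regular, it is also normal; this is legitimate by Proposition~\ref{reg. atlases} together with the normalization procedure following Definition~\ref{normalatlas}. From now on $\beta_p(v)=\5v\circ\alpha_p(v)\in\mapc(F,X)_n$ denotes such an atlas.

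The candidate map $h\colon B\times_{\xi^0_p}F\to X$ is then defined bidegreewise by
$$ h(v,x)=\beta_p(v)(x,\iota_n)\in X_{c,n}\,,\qquad (v,x)\in B_n\times F_{c,n}\,, $$
and the plan is to verify, in this order, that $h$ is a morphism of $\calc$-diagrams, that it lies over $B$, that it is bijective in every bidegree, and finally that it is a $G$-map. That $h$ commutes with the structure maps $f\in\Mor_\calc(c,d)$ is immediate, since in $B\times_{\xi^0_p}F$ one has $f\cdot(v,x)=(v,f(x))$ and $\beta_p(v)$, being a map of $\calc$-diagrams, already acts only on the fibre coordinate. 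For degeneracies one uses normality, $\beta_p(s_iv)=s_i\beta_p(v)=\beta_p(v)\circ(\Id_F\times s^i)$, together with the identity $s^i(\iota_{n+1})=s_i\iota_n$, which give $h(s_i(v,x))=\beta_p(v)(s_ix,s_i\iota_n)=s_ih(v,x)$; for $d_i$ with $i\geq1$ one uses regularity in the form $\beta_p(d_iv)=d_i\beta_p(v)=\beta_p(v)\circ(\Id_F\times d^i)$ and $d^i(\iota_{n-1})=d_i\iota_n$; and for $d_0$ one uses the atlas relation $d_0\beta_p(v)=\beta_p(d_0v)\circ\xi^0_p(v)$ of diagram~\eqref{G-atlas}, evaluating both sides at $(\iota_{n-1},d_0x)$ and using that the automorphism $\xi^0_p(v)\in\autc(F)_{n-1}$ acts on $d_0x$ exactly as prescribed by the face operator $d_0(v,x)=(d_0v,\xi^0_p(v)\cdot d_0x)$ of the $\calc$-TCP. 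These are the classical computations of \cite{Barrat,Curtis,May}, now carried out one bidegree at a time. That $h$ lies over $B$ is then clear from the pullback square~\eqref{beta-C-atlas}, which gives $p\circ\beta_p(v)=v\circ\proj$, hence $p(h(v,x))=v=\proj(v,x)$.

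To see that $h$ is bijective in bidegree $(c,n)$, I would use local triviality: $\alpha_p(v)\colon\Delta[n]\times F\to\Delta[n]\times_BX$ is an isomorphism of $\calc$-diagrams over $\Delta[n]$, so its $(c,n)$-component is a bijection preserving the $\Delta[n]$-coordinate and therefore restricts to a bijection from $\{\iota_n\}\times F_{c,n}$ onto $\{(\iota_n,w)\mid w\in X_{c,n},\ p_{c,n}(w)=v\}$; composing with the projection $\5v$ this means precisely that $x\mapsto h(v,x)$ is a bijection from $F_{c,n}$ onto the fibre $p_{c,n}^{-1}(v)$. Since $p$ is an epimorphism, $X_{c,n}=\coprod_{v\in B_n}p_{c,n}^{-1}(v)$, so $h_{c,n}\colon B_n\times F_{c,n}\to X_{c,n}$ is a bijection; thus $h$ is an isomorphism of $\calc$-diagrams over $B$. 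Finally, one checks that $h$ identifies the canonical regular normal $G$-atlas carried by the $\calc$-TCP $B\times_{\xi^0_p}F$ with $\{\beta_p(v)\}$, the comparison elements $\gamma(v)$ being the units $e_n$, so that $h$ is a $G$-map in the sense of Definition~\ref{defGequiv}; being an isomorphism it is an isomorphism of $G$-$\calc$-fibre bundles, as required.

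The step I expect to require the most care is the verification that $h$ is a morphism of $\calc$-diagrams: one must keep track of the harmless swaps $F\times\Delta[k]\cong\Delta[k]\times F$, of the adjoint (isomorphism-over-$\Delta[k]$) description of the $\autc(F)$-action, and of the simplicial relations of Definition~\ref{twisting} for $\xi^0_p$, and make sure they combine correctly in the $d_0$ case. Once this bookkeeping is in place, the presence of the index category $\calc$ adds nothing new, since all the $\calc$-structure maps act only on the fibre coordinate, and the argument reduces verbatim to the classical one.
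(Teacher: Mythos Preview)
Your proposal is correct and follows essentially the same approach as the paper: both define $h$ by $h(v,x)=\beta_p(v)(\iota_n,x)$ and then reduce to the classical simplicial argument objectwise, with the only genuinely new verification being naturality in $\calc$. The paper is terser, deferring the simplicial checks and bijectivity to the classical case (citing May) and spelling out only the $\calc$-naturality square, whereas you unpack the degeneracy, face (including the twisted $d_0$), bijectivity, and $G$-map verifications explicitly; but the underlying argument is the same.
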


\begin{proof} By Proposition~\ref{trans-elemts}, $\xi^0_p$ is a twisting function.
We can now form the $\mathcal{C}$-TCP $B\times_{\xi^0_p} G$ with a projection $B\times_{\xi^0_p} G\Right0{} B$ which
is indeed a principal $G$-fibre bundle. Then, we obtain a new $\calc$-fibre bundle as $B\times_{\xi^0_p} F \cong
(B\times_{\xi^0_p} G)\times_G F \Right0{} B$. We will now construct a $G$-equivalence
$h \colon B\times_{\xi^0_p} F \Right0{} X$ over $B$.

For any $n\geq0$, $c \in \Ob(\calc)$, $v\in B_n$, and $z\in F_{c,n}$, define
$h_c(v,z)=\beta_{p_c}(v)(\imath_n,z)\in X_{c,n}$.
By Proposition \ref{trans-elemts},  $h_c$ is an isomorphism for every $c\in\mathcal{C}$.
It
remains to show that for every $f\colon c\Right0{} d\in\mathcal{C}$ the following square commutes:
$$
\xymatrix{B\times_{\xi^0_{p_c}} F_c\ar[r]^(0.6){h_c}\ar[d]_{1\times F_f}    &   X_c\ar[d]^{X_f}   \\
            B\times_{\xi^0_{p_d}} F_d\ar[r]_(0.6){h_d}                        &   X_d.   }
$$
Take an $n$-simplex $(v,z)$ in $B\times_{\xi^0_{p_c}} F_c$ and evaluate it in
the above square, that is, $X_fh_c(v,z)=X_f\beta_{p_c}(v)(\imath_n,z)$
and $h_d(1\times F_f)(v,z)=h_d(v,F_f(z))=\beta_{p_d}(v)(\imath_n,F_f(z))$.
Since $p$ is a $\mathcal{C}$-fibre bundle it looks locally as the diagram $(7)$,
which is commutative, and therefore
$X_f\beta_{p_c}(v)(\imath_n,z)=\beta_{p_d}(v)(1\times F_f)(\imath_n,z)$, where
$\beta_{p_d}(v)(1\times F_f)(\imath_n,z)=\beta_{p_d}(v)(\imath_n,F_f(z))$.
\end{proof}

Now we can prove the key correspondence between principal $G$-fibre bundles
and $G$-$\calc$-fibre bundles:

\begin{Teo}\label{Tveo} Let $\calc$ be a small category,  $F$ a $\calc$-diagram, and $G$ a simplicial group
with a left action on $F$.  Given a simplicial set $B$, there is a bijection of sets
\begin{equation*}
 \left\{  \parbox{4cm}{\flushleft Isomorphism classes of principal $G$-fibre bundles over $B$}      \right\}
    \RIGHT6{\Psi}{\cong}
 \left\{  \parbox{4cm}{\flushleft $G$-equivalence classes
of $G$-$\calc$-fibre bundles over $B$ with fibre $F$}      \right\}
\end{equation*}
which assigns the $G$-class of $G$-$\calc$-fibre bundles
represented by $p=\xi\times_G1\colon E\times_G F\Right0{} B$ to the class of a principal $G$-fibre bundle
 $\xi\colon E\Right0{}B$.
\end{Teo}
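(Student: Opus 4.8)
The plan is to exhibit a pair of mutually inverse maps between the two classification sets, building on the constructions already available in the excerpt. First I would make the map $\Psi$ explicit and well-defined. Given a principal $G$-fibre bundle $\xi\colon E\Right0{}B$, the associated $\calc$-diagram $E\times_G F$ carries an obvious map $p=\xi\times_G 1$ to $B$, and one equips it with the tautological $G$-atlas obtained by transporting the canonical atlas of the principal bundle through the construction $-\times_G F$; by Proposition~\ref{reg. atlases} we may take this atlas regular, and by the normalisation procedure described after Definition~\ref{normalatlas} we may also assume it normal. I would check that an isomorphism of principal $G$-bundles $E\cong E'$ over $B$ induces, after applying $-\times_G F$, a $G$-map (Definition~\ref{defGequiv}) between the resulting $G$-$\calc$-fibre bundles, so $\Psi$ is well-defined on isomorphism classes. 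The local-triviality diagram \eqref{alpha-C-atlas} holds because it holds for the principal bundle before forming the balanced product.

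Next I would construct the inverse. Given a $G$-$\calc$-fibre bundle $p\colon X\Right0{}B$ with fibre $F$, pick a regular normal $G$-atlas $\{\alpha_p(v)\}$; by Proposition~\ref{trans-elemts} (equivalently Proposition~\ref{psi-surj}) its transformation elements $\{\xi^0_p(v)\}$ define a twisting function $\xi^0_p\colon B\Right0{}G$, hence a principal $G$-fibre bundle $B\times_{\xi^0_p}G\Right0{}B$. This assignment is the candidate inverse $\Phi$ of $\Psi$. To see it is well-defined, I would invoke Proposition~\ref{fibrequivalent}: a $G$-equivalence between two $G$-$\calc$-fibre bundles corresponds to $G$-equivalent twisting functions, and by Proposition~\ref{D:Map of C-TCP's} $G$-equivalent twisting functions give $G$-isomorphic $\calc$-TCP's, in particular isomorphic principal $G$-bundles $B\times_{\xi^0_p}G$. (Proposition~\ref{reg. atlases} guarantees every class has a regular representative, so the choice of regular atlas does not matter.)

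Finally I would verify that $\Phi$ and $\Psi$ are mutually inverse. The composite $\Psi\circ\Phi$ sends $p$ to $(B\times_{\xi^0_p}G)\times_G F$, and Proposition~\ref{psi-surj} provides an explicit isomorphism $h\colon B\times_{\xi^0_p}F\cong (B\times_{\xi^0_p}G)\times_G F\Right0{}X$ of $G$-$\calc$-fibre bundles, so $\Psi\circ\Phi=\Id$. For $\Phi\circ\Psi$, starting from a principal bundle $\xi\colon E\Right0{}B$ one first replaces $E$ by the isomorphic $B\times_t G$ for a twisting function $t$ classifying $\xi$ (the principal-bundle case of the TCP correspondence, which is the classical fact underlying Proposition~\ref{psi-surj}); the atlas of $E\times_G F$ that we chose in defining $\Psi$ has transformation elements exactly $t$ up to the normalisation, so $\Phi(\Psi(\xi))$ is the principal bundle with twisting function $t$, namely $E$ again up to isomorphism. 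I expect the main obstacle to be the bookkeeping in this last step: one must check that the regular normal atlas produced on $E\times_G F$ really recovers the original twisting function on the nose (after the normalisation adjustments), rather than merely an equivalent one — though even an equivalence suffices here since we work with isomorphism classes. The rest is a routine assembly of Propositions~\ref{reg. atlases}, \ref{trans-elemts}, \ref{fibrequivalent}, \ref{D:Map of C-TCP's}, and \ref{psi-surj}.
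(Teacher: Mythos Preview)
Your proposal is correct and follows essentially the same route as the paper, using the same key ingredients (Propositions~\ref{reg. atlases}, \ref{trans-elemts}, \ref{fibrequivalent}, \ref{D:Map of C-TCP's}, and \ref{psi-surj}). The only difference is organizational: the paper shows $\Psi$ is well-defined, then surjective (via Proposition~\ref{psi-surj}), then injective (by passing to twisting functions via Propositions~\ref{fibrequivalent} and \ref{D:Map of C-TCP's}), whereas you package the same material as an explicit inverse $\Phi$ and check both composites---your $\Psi\circ\Phi=\Id$ is the paper's surjectivity step, and your well-definedness of $\Phi$ together with $\Phi\circ\Psi=\Id$ amount to the paper's injectivity step.
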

\begin{proof}
We first check that $\Psi$ is well-defined. A principal $G$-fibre bundle $\xi\colon E\Right0{}B$
is locally trivial, hence $p\colon E\times_G F\Right0{} B$ is also locally trivial, and then
a $\calc$-fibre bundle. Furthermore, the transformation elements of $\xi$ belong to $G$ and
provide a $G$-atlas for $p$, hence $p$ is really a $G$-$\calc$-fibre bundle. Finally, if $\xi$ and $\xi'$
are isomorphic principal $G$-fibre bundles, they have the same twisting functions by
Proposition~\ref{fibrequivalent}, and then Proposition \ref{D:Map of C-TCP's}
implies that $p$ and $p'$ are $G$-equivalent.

 We state now the surjectivity of $\Psi$. Given a $G$-$\calc$-fibre bundle $p\colon X\Right0{}B$, pick a regular $G$-atlas,
 so the transformation elements form a $G$-twisting function. This function
 determines a $\mathcal{C}$-TCP $\xi\colon B\times_t G\Right0{}B$, which is a principal $G$-fibre bundle. Then we should check
 that $\Psi([\xi]) = [p]$, that is, that there is a $G$-equivalence over $B$:
  $(B\times_t G)\times_G F \Right0{\cong} X$. But this is Proposition \ref{psi-surj}.

Finally, let us check that $\Psi$ is injective. Let $[\xi_1]$ and $[\xi_2]$ be two isomorphism classes of
 principal $G$-fibre bundles such that $\Psi([\xi_1])=\Psi([\xi_2])$. Fix representatives of these isomorphism classes
 that are $\mathcal{C}$-TCP's,  $\xi_i\colon B\times_{t_i} G\Right0{} B$, $i=1,2$, with twisting
 functions $t_i\colon B\Right0{}G$, $i=1,2$. Then, for $i=1,2$,
$\Psi([\xi_i])$ are represented by the $\calc$-TCP's $B\times_{t_i}F\cong (B\times_{t_i} G)\times_G F$.
So, $\Psi([\xi_1])=\Psi([\xi_2])$ if and only if $B\times_{t_1}F$ and $B\times_{t_2}F$ are $G$-equivalent.
By Proposition \ref{D:Map of C-TCP's}  this
happens if and only if the twisting functions $t_1$ and $t_2$ are $G$-equivalent. But Proposition \ref{fibrequivalent} implies that
this is equivalent to
 $B\times_{t_1}G$
and $B\times_{t_2}G$ being  isomorphic principal $G$-fibre bundles. In particular, $[\xi_1] = [\xi_2]$.
 \end{proof}

\begin{proof}[Proof of Theorem~\ref{T:Classif-C-fibre(B)}]
This is now an immediate consequence of Theorem~\ref{Tveo} and the classical classification of isomorphism classes of principal $G$-bundles.
\end{proof}

\section{Classification of fibrations}
\label{Classifibrations}
We proceed now to the proof of Theorem~\ref{T:Clas-C-Fib-1(C)}. The index category $\calc$ will be assumed to be
a small artinian $EI$-category.

\begin{Lem}\label{L:pullback of min.f} Let $\mathcal{C}$ be a small category,
$\alpha\colon A\Right0{} B$ a map in $\mathbf{S}$ and $p\colon X\Right0{} B$ a minimal
fibration in $\mathbf{S}^{\mathcal{C}}$ over the constant $\mathcal{C}$-diagram to $B$.
Then, the pullback fibration  $\5p\colon A\times_B X\Right0{} A$ is also minimal.
\end{Lem}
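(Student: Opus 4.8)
The plan is to check directly the two requirements of Definition~\ref{D:Minimal fibration-diagrams}: that the total diagram $A\times_B X$ is free, and that two distinct generators of some (hence any) basis are never comparable under $\preceq$. Before that, note that $\5p$ is a fibration because it is a pullback of the fibration $p$, and fibrations in $\mathbf{S}^{\calc}$, being characterised by the right lifting property against the maps $\lambda^c_{n,k}\hookrightarrow\delta^c_n$, are stable under pullback. Recall also that the pullback is formed objectwise, $(A\times_B X)(c)=A\times_B X(c)$, with structural morphism $(A\times_B X)(f)(a,x)=(a,X(f)(x))$ for $f\in\Mor(\calc)$; this formula makes sense exactly because $B$ is a constant diagram, so $p$ commutes with the structural maps of $X$.

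First I would produce a basis for $A\times_B X$. Fix a basis $\Sigma(X)$ of $X$ and set
$$\Sigma\defeq\conj{(a,w)}{c\in\Ob(\calc),\ n\geq0,\ w\in\Sigma(X)_{c,n},\ a\in A_n,\ \alpha(a)=p(w)}\,.$$
One checks that $\Sigma$ is closed under degeneracy operators (because $\Sigma(X)$ is, and $\alpha$, $p$ commute with the $s_i$), and that it generates $A\times_B X$ freely: given an arbitrary simplex $(a,x)$ of $(A\times_B X)_{c,n}$, freeness of $X$ (Definition~\ref{P:Basis}) yields a unique $w\in\Sigma(X)$ and a unique $h\in\Mor(\calc)$ with $h(w)=x$; then $\alpha(a)=p(x)=p(h(w))=p(w)$, so $(a,w)\in\Sigma$ and $h(a,w)=(a,x)$, while the uniqueness of this presentation follows from that in $X$ together with the fact that the $\calc$-action on $A\times_B X$ fixes the first coordinate. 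Hence $A\times_B X$ is free with basis $\Sigma$.

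Then I would verify minimality using this basis. Suppose $(a,w),(a',w')\in\Sigma$ and $(a,w)\preceq(a',w')$; by Definition~\ref{D.Homotop-C-simp.} there are $f\in\Mor(\calc)$ and a $\5p$-homotopy $H\colon\delta^c_n\times\Delta[1]\to A\times_B X$, relative to the boundary, from $f(a,w)=(a,f(w))$ to $(a',w')$, with $\5p\circ H$ constant. Comparing the first coordinates of the two endpoints and using that $\5p\circ H$ is constant forces $a=a'$. Composing $H$ with the projection $\pi\colon A\times_B X\to X$ gives a homotopy relative to the boundary from $f(w)$ to $w'$, and it is a $p$-homotopy because $p\circ\pi=\alpha\circ\5p$, so $p\circ\pi\circ H=\alpha\circ(\5p\circ H)$ is again constant. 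Thus $w\preceq w'$ in $X$; since $p$ is minimal and $w,w'\in\Sigma(X)$, this gives $w=w'$, and therefore $(a,w)=(a',w')$. By Definition~\ref{D:Minimal fibration-diagrams}, $\5p$ is minimal.

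I do not expect a genuine obstacle. The two places that need care are the claim that the explicit set $\Sigma$ is a basis (a bookkeeping argument resting on freeness of $X$ and on $B$ being constant) and the verification that each of the three clauses defining a $p$-homotopy survives the projection $\pi$; both are straightforward.
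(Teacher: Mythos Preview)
Your proof is correct and follows essentially the same approach as the paper: both construct the basis $\Sigma=\{(a,w):w\in\Sigma(X),\ \alpha(a)=p(w)\}$ for $A\times_B X$, verify freeness via uniqueness in $X$, and then check minimality by projecting a $\5p$-homotopy to a $p$-homotopy in $X$. If anything, your write-up is slightly more careful (e.g.\ explicitly invoking the pullback stability of fibrations and spelling out why the projected homotopy is fibrewise), but the argument is the same.
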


\begin{proof} Let  $\Sigma$ be a basis for $X$ in the sense of Definition~\ref{P:Basis}.
We will show that $\Sigma^{^\sqcap}=\conj{(u,x)\in A\times_B X}{x\in\Sigma}$ is
a basis for $A\times_B X$. Pick an arbitrary element $(u,x)\in \Sigma^{^\sqcap}$,
so $u\in A$ and $x\in \Sigma$. Since $\Sigma$ is closed under degeneracy operators,
for each $k$, $s_kx\in \Sigma$ and then also
$s_k(u,x)=(s_ku,s_kx)\in\Sigma^{^\sqcap}$, thus $\Sigma^{^\sqcap}$ is also closed
under degeneracy operators.

Pick now an arbitrary element $(v,z)\in A\times_B X$; now $z\in X$ and there is a
 unique morphism $f$ of $\calc$ and a unique
$x\in \Sigma$ such that $f(x)=z$.
Then, $(v,x)\in  A\times_B X$ for $p(x)=(p\circ f)(x)=p(z)=\alpha(v)$,
so in fact  $(v,x)\in \Sigma^{^\sqcap}$, and $f(v,x)=(v,z)$.
If there were another $g\in \Mor(\calc)$
and another $(w,y)\in\Sigma^{^\sqcap}$ such that
$g(w,y)=(v,z)$, we would have  that $(w,g(y))=(v,z)=(v,f(x))$, so $w=v$, and
since $\Sigma$ is a basis for $X$,  $g=f$ and $y=x$. We have proved that $A\times_B X$
is a free $\calc$-diagram with base $\Sigma^{^\sqcap}$.

In order to prove that the fibration $\5p\colon A\times_B X\Right0{} A$ is minimal it remain to show that
given two elements of a given base for $A\times_B X$, if one preceds the other then they coincide.
Assume that  $(v,z)$ and $(w,x)$ are elements of $\Sigma^{^\sqcap}$ and
suppose that $(w,x)\preceq(v,z)$. Then there exists a map $f\in\mathcal{C}$ such that
$f(w,x)\simeq_{p} (v,z)$, so $w=v$ since $p$-homotopic simplices are in the same fibre.
We also have that $x\simeq_p z$, but from minimality of $p$ it must hold
that $x=z$, that is, $(v,z)=(w,x)$.
\end{proof}

The proof of the following lemma is analogous to that of Lemma 10.6 in Chapter I of \cite{Goerss-Jardine}.
In the case of $\mathbf{S}^{\mathcal{C}}$ the lifting properties used in this
proof can be obtained using Proposition~\ref{cof}
and Axiom M4 (see Definition~\ref{functcomp}).
This result will help to establish a connection
between the theory of minimal fibrations and the theory of $\mathcal{C}$-fibre bundles.

\begin{Lem}\label{L:f.h.e} Let $\mathcal{C}$ be a small category, $\alpha_0,\alpha_1\colon A\Right0{} B$ homotopic maps in $\mathbf{S}^{\mathcal{C}}$ and let
$p\colon X\Right0{} B$ be a fibration in $\mathbf{S}^{\mathcal{C}}$. Then the induced fibrations $p_{\alpha_0}\colon A\times_B^{\alpha_0} X\Right0{} A$ and
$p_{\alpha_1}\colon A\times_B^{\alpha_1} X\Right0{} A$ are fibrewise homotopy equivalent.
\end{Lem}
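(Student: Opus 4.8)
The statement is the diagram version of the classical fact that pullbacks of a fibration along homotopic maps are fibrewise homotopy equivalent. The standard argument (as in \cite[I.10.6]{Goerss-Jardine}) proceeds through the "mapping cylinder" pullback: let $K\colon A\times\Delta[1]\Right0{}B$ be a homotopy from $\alpha_0$ to $\alpha_1$, form the pullback $p_K\colon (A\times\Delta[1])\times_B X\Right0{}A\times\Delta[1]$, and show that each of the two inclusions $A\cong A\times\{i\}\hookrightarrow A\times\Delta[1]$ induces a pullback $A\times_B^{\alpha_i}X$ which is a strong deformation retract of $(A\times\Delta[1])\times_B X$, all fibrewise over $A$. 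Composing the retraction onto one end with the inclusion of the other end yields the fibrewise homotopy equivalence between $p_{\alpha_0}$ and $p_{\alpha_1}$. So the heart of the matter is a single lemma: if $p\colon Y\Right0{}A\times\Delta[1]$ is a fibration in $\mathbf{S}^{\calc}$, then the restriction $Y_0\Right0{}A$ of $p$ over $A\times\{0\}$ is a strong fibrewise deformation retract of $Y$ over $A$, and symmetrically for the other end.

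\textbf{Carrying out the retraction lemma.} To build the deformation retraction of $Y$ onto $Y_0$ I would use the lifting axiom M4 of Definition~\ref{D:Model-Category}, exactly as in the simplicial-set case but now in $\mathbf{S}^{\calc}$. The retraction $r\colon Y\Right0{}Y_0$ and the homotopy $G\colon Y\times\Delta[1]\Right0{}Y$ from $\incl\circ r$ to $\Id_Y$ are obtained simultaneously by solving a single lifting problem: the outer square has left-hand map $Y\times\{1\}\cup Y_0\times\Delta[1]\hookrightarrow Y\times\Delta[1]$, which is a trivial cofibration by Proposition~\ref{cof} (since $\{1\}\hookrightarrow\Delta[1]$ is a weak equivalence and $Y_0\hookrightarrow Y$ is a cofibration — the latter because $\emptyset\to Y$ is, $Y$ being built from a fibrant diagram, or more simply because any inclusion of $\calc$-diagrams obtained as a pullback corner here is a cofibration), the top map is $\Id_Y$ on $Y\times\{1\}$ and the "collapse the $\Delta[1]$-coordinate to $0$" map on $Y_0\times\Delta[1]$, and the right-hand map is $p$ composed with the projection-then-collapse $A\times\Delta[1]\Right0{}A\times\Delta[1]$, $(a,t)\mapsto(a,0)$. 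The bottom map is chosen so that $p\circ G$ is the constant-in-$\Delta[1]$-coordinate homotopy on $A$, which is precisely what makes everything fibrewise over $A$. The lift $G$ then restricts on $Y\times\{0\}$ to a map landing in $Y_0$ (since $p\circ G$ there has second coordinate $0$), giving $r$, and all the required fibrewise and relative-to-$Y_0$ conditions are built into the diagram. This step is essentially identical to the cited proof; the only new ingredient is that the relevant trivial cofibrations and the lifting property are available in $\mathbf{S}^{\calc}$, which is exactly what the remark preceding the lemma flags (Proposition~\ref{cof} plus M4).

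\textbf{Assembling the proof of the lemma.} With the retraction lemma in hand: both inclusions $j_0,j_1\colon A\hookrightarrow A\times\Delta[1]$ are sections of the projection and each $j_i$ exhibits $A\times_B^{\alpha_i}X = (A\times\Delta[1])\times_B X$ pulled back along $j_i$, which is $(j_i)^*$ of the big fibration $p_K$; by the lemma (applied at each end) each such pullback is a strong fibrewise deformation retract of $(A\times\Delta[1])\times_B X$ over $A\times\Delta[1]$, hence over $A$ after composing with the projection. Denote by $r_i$ the retraction onto the $i$-end and by $s_i$ the inclusion; then $r_1\circ s_0\colon A\times_B^{\alpha_0}X\Right0{}A\times_B^{\alpha_1}X$ and $r_0\circ s_1$ going back are fibrewise maps over $A$, and the homotopies $G_0,G_1$ furnished by the lemma show $r_1 s_0$ and $r_0 s_1$ are mutually inverse up to fibrewise homotopy over $A$. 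That is the desired fibrewise homotopy equivalence between $p_{\alpha_0}$ and $p_{\alpha_1}$. The main obstacle is purely the bookkeeping of the lifting diagram in the retraction lemma — making sure the corner map is genuinely a trivial cofibration in $\mathbf{S}^{\calc}$ (invoke Proposition~\ref{cof}, noting one may replace $Y$ by a cofibrant/free model if needed, though for pullbacks of fibrations over $A\times\Delta[1]$ the inclusion $Y_0\hookrightarrow Y$ is already a cofibration) and that the chosen bottom map forces the lift to be a homotopy through the fibre over $A$; everything else is formal and transcribes the classical argument verbatim.
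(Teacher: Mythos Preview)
Your proposal takes essentially the same route as the paper: the paper's entire proof is the remark that the argument is analogous to \cite[I.10.6]{Goerss-Jardine}, with the lifting properties supplied by Proposition~\ref{cof} and Axiom~M4, which is precisely what you have spelled out.

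One point of care: your justifications that $Y_0\hookrightarrow Y$ is a cofibration in $\mathbf{S}^{\calc}$ are not right as written---being ``built from a fibrant diagram'' is irrelevant, and an inclusion arising as a pullback is not automatically a cofibration here (unlike in $\mathbf{S}$, monomorphisms of $\calc$-diagrams need not be cofibrations). Your suggested workaround of passing to a cofibrant replacement is valid and suffices; alternatively, one can run the Goerss--Jardine induction cell by cell against the maps $\dot\delta_n^c\times\Delta[1]\cup\delta_n^c\times\{1\}\hookrightarrow\delta_n^c\times\Delta[1]$, which \emph{are} trivial cofibrations by Proposition~\ref{cof} since $\dot\delta_n^c\hookrightarrow\delta_n^c$ is a generating cofibration. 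The paper leaves this detail implicit as well.
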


\begin{Def}
The fibre $F_b$ of a $\calc$-fibration $p\colon X\Right0{} B$
over a vertex $b\in B$ is
defined by the pullback diagram
$$
\xymatrix{ F_b \ar[r]^-{\5{b}} \ar[d]_{\5p} & X \ar[d]^p \\  \Delta[0] \ar[r]^{b} & B}
$$
We will say that the fibre of the fibration $p\colon X\Right0{} B$ is weakly equivalent to the
$\calc$-diagram $F$ if for every vertex $b\in B$,
the fibre $F_b$ over $b$ is weakly homotopy equivalent to $F$.
\end{Def}

\begin{Def}\label{w-eq-fibs} Two fibrations $p\colon X\Right0{} B$ and $p'\colon X'\Right0{} B$ in $\mathbf{S}^{\mathcal{C}}$ are said to be weakly homotopy equivalent if there exists a finite zig-zag
\begin{equation}\label{seq_w.eq}
 \xymatrix{ X \ar[d]^{p}\ar[r]^{\simeq} & X_1 \ar[d]^{p_1} &  X_2 \ar[d]^{p_2}\ar[l]_{\simeq}\ar[r]^{\simeq} & ... &
X_n \ar[d]^{p_n}\ar[l]_{\simeq}\ar[r]^{\simeq}  & X' \ar[d]^{p'} \\
 B\ar@{=}[r]  & B\ar@{=}[r]   &  B\ar@{=}[r]  &  \dots \ar@{=}[r]  & B\ar@{=}[r]   & B
}
\end{equation}
of weak equivalences over $B$ that
joins $p$ with $p'$ through fibrations of $\mathbf{S}^{\mathcal{C}}$.
\end{Def}

In the situation of the above definition, for a vertex $b$ of $B$, we obtain a corresponding zig-zag of weak homotopy equivalences between the respective fibres
\begin{equation}\label{seq-fibs}
 F_b \Right0{\simeq} (F_1)_b \Left0{\simeq} (F_2)_b\Right0{\simeq}
       \dots \Left0{\simeq} (F_n)_b\Right0{\simeq} F'_b
\end{equation}

\begin{Nota}\label{rem-w.eq.}
 Note that we would obtain an equivalent definition to \ref{w-eq-fibs} if we did not require the maps $p_i\colon
 X_i\Right0{}B$,
 $i=1, \dots, n$
 to be fibrations. Indeed, if $p_n$ is not a fibration, we can factor it as a trivial cofibration followed by a fibration
 $X_n\Right0{\imath} X_n'\Right0{p_n'} B$, by axioms M5 (cf.~Definition~\ref{D:Model-Category}) and then axiom
 M4 would provide liftings $X_{n-1} \Left0{\simeq} X_n' \Right0{\simeq} X'$, making the diagram
\begin{equation}\label{seq_w.eq-b}
 \xymatrix{  X_{n-1}  \ar[d]_{p_{n-1}}&  X'_n \ar[d]^{p'_n}\ar[l]_-{\simeq}\ar[r]^-{\simeq}  & X' \ar[d]^{p'} \\
 B  \ar@{=}[r]  & B\ar@{=}[r]   & B
}
\end{equation}
commutative, with $p_n'$ a fibration.
Similarly for the other maps $p_i$.
\end{Nota}

\begin{Lem}\label{fibres} If  $p\colon X\Right0{} B$ is a minimal $\calc$-fibration, the fibre $F_b$ over a vertex $b\in B$
is a minimal $\calc$-diagram. Moreover,
\begin{enumerate}[\rm (a)]
\item If $b'$ is another vertex of $B$ in the same connected component as $b$, then $F_b$ and $F_{b'}$  are isomorphic.
\item If the fibrations in the sequence of weak equivalences \eqref{seq_w.eq} are minimal, then the maps
in the sequence \eqref{seq-fibs} are isomorphisms.
\end{enumerate}
\end{Lem}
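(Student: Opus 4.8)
The plan is to obtain all three statements from the pullback lemma~\ref{L:pullback of min.f}, the homotopy-invariance lemma~\ref{L:f.h.e}, and the rigidity results for minimal fibrations, namely Proposition~\ref{P:strong-isom-min.f} and Corollary~\ref{C:Isomorphism between minimal fibrations}. For the first assertion, recall that $F_b$ is by definition the pullback of $p$ along the map $b\colon\Delta[0]\Right0{}B$; since $\Delta[0]=\ast$, Lemma~\ref{L:pullback of min.f} says that $\bar p\colon F_b\Right0{}\ast$ is a minimal fibration, which by Definition~\ref{D:Minimal fibration-diagrams} means precisely that $F_b$ is a minimal $\calc$-diagram. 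In particular $F_b$ is free, hence cofibrant, and it is fibrant, being the total space of a fibration over $\ast$.

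For (a) it suffices to treat the case in which $b$ and $b'$ are the two endpoints of a single $1$-simplex, since any two vertices in the same connected component of $B$ are joined by a finite edge-path and the resulting isomorphisms can be composed. So fix $\omega\colon\Delta[1]\Right0{}B$ with $\omega d^1=b$ and $\omega d^0=b'$. Pulling $p$ back along $\omega$ produces, by Lemma~\ref{L:pullback of min.f}, a minimal fibration $\bar p\colon Y\Right0{}\Delta[1]$ with $Y=\Delta[1]\times_BX$, whose fibres over the vertices $0$ and $1$ of $\Delta[1]$ are canonically identified with $F_b$ and $F_{b'}$. Now $d^1,d^0\colon\Delta[0]\Right0{}\Delta[1]$ are homotopic maps (a homotopy being the identity of $\Delta[1]$), so Lemma~\ref{L:f.h.e} applied to $\bar p$ gives a fibrewise homotopy equivalence over $\Delta[0]=\ast$ between $F_b$ and $F_{b'}$. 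Both are minimal $\calc$-diagrams by the first part, so Corollary~\ref{C:Isomorphism between minimal fibrations} yields an isomorphism $F_b\cong F_{b'}$.

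For (b), apply the first part to each of the fibrations in~\eqref{seq_w.eq}, which are minimal by hypothesis: then every $\calc$-diagram in the sequence~\eqref{seq-fibs} is a minimal $\calc$-diagram, so it is enough to prove that \emph{a weak equivalence $f\colon M\Right0{}N$ of minimal $\calc$-diagrams is an isomorphism} and to apply this to each arrow of \eqref{seq-fibs}. Since $M$ and $N$ are both free (hence cofibrant) and fibrant, $f$ is a weak equivalence between fibrant-cofibrant objects of $\mathbf{S}^\calc$, hence a homotopy equivalence by the Whitehead theorem in a model category; pick a homotopy inverse $g\colon N\Right0{}M$, so that $g\circ f$ is homotopic to $\Id_M$ and $f\circ g$ is homotopic to $\Id_N$. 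Viewing $M\Right0{}\ast$ as a minimal fibration, Proposition~\ref{P:strong-isom-min.f} (with $Z=X=M$, $B=\ast$ and $\beta=\Id_M$) shows that $g\circ f$ is an isomorphism, and symmetrically $f\circ g$ is an isomorphism; therefore $f$ is an isomorphism.

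The main obstacle is precisely the claim invoked in (b): that a weak equivalence between minimal $\calc$-diagrams is automatically an isomorphism. This is where the structural hypotheses really enter --- freeness of the total space of a minimal fibration (giving cofibrancy), fibrancy of a fibre, the general Whitehead theorem in the model category $\mathbf{S}^\calc$, and the rigidity Proposition~\ref{P:strong-isom-min.f}. Everything else is a formal manipulation of pullbacks and homotopies along the lines of the classical simplicial case.
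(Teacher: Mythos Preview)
Your proof is correct and follows essentially the same route as the paper: Lemma~\ref{L:pullback of min.f} for minimality of the fibre, the edge-path reduction combined with Lemma~\ref{L:f.h.e} and Corollary~\ref{C:Isomorphism between minimal fibrations} for (a), and the rigidity of minimal objects for (b). Your treatment of (b) is in fact more explicit than the paper's, which simply says ``follows by similar arguments''; your use of Whitehead's theorem to upgrade the weak equivalences of \eqref{seq-fibs} to homotopy equivalences, and then Proposition~\ref{P:strong-isom-min.f} (equivalently, Corollary~\ref{C:Isomorphism between minimal fibrations}) to conclude, is exactly what is needed to unpack that phrase.
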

\begin{proof} $F_b$ is minimal by
Lemma~\ref{L:pullback of min.f}.

Then, (a) follows because if $b$ and  $b'$ are the two vertices of an edge of $B$,
Lemma~\ref{L:f.h.e} implies that the pullback of $p$ along the inclusion of this edge in
$B$ provides a homotopy equivalence between $F_b$ and $F_{b'}$. Now,
Corollary~\ref{C:Isomorphism between minimal fibrations} implies that the
map is indeed an isomorphism. In general there will be a sequence of edges joining
$b$ and $b'$, and we only need to apply this same argument repeatedly along these lines.

Finally, (b) follows by similar arguments.
\end{proof}

\begin{Cor}\label{C:Min.fib is FB}
Let $B$ be a connected simplicial set.
If $p\colon X\Right0{} B$ is a minimal fibration in $\mathbf{S}^{\mathcal{C}}$ over the constant diagram
$B$, then $p$ is a $\calc$-fibre bundle over $B$ with fibre $F\cong F_b$ for any vertex $b\in B$.
\end{Cor}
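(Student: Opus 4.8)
The plan is to produce a $\calc$-atlas for $p$ by pulling it back along each simplex of $B$ and invoking two facts: a standard simplex contracts onto its initial vertex, and two fibrewise homotopy equivalent minimal fibrations over a fixed base are actually \emph{isomorphic} (Corollary~\ref{C:Isomorphism between minimal fibrations}). Fix a vertex $b_0\in B$ and set $F=F_{b_0}$. Since $B$ is connected, Lemma~\ref{fibres}(a) furnishes an isomorphism $\iota_b\colon F\xrightarrow{\cong}F_b$ for every vertex $b$ of $B$.

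The main step is, for each $n$-simplex $v\colon\Delta[n]\to B$, to build an isomorphism $\alpha_p(v)\colon\Delta[n]\times F\xrightarrow{\cong}\Delta[n]\times_B X$ of $\calc$-diagrams over $\Delta[n]$ realising diagram~\eqref{alpha-C-atlas}. Let $b=v(0)$ and let $c_b\colon\Delta[n]\to B$ be the constant map at $b$. Contracting $\Delta[n]$ onto its initial vertex (via the nerve of the monotone map $[n]\times[1]\to[n]$ sending $(i,0)\mapsto 0$ and $(i,1)\mapsto i$) and composing with $v$ exhibits $v$ as homotopic to $c_b$, as maps of constant $\calc$-diagrams. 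By Lemma~\ref{L:f.h.e}, the induced fibrations $\5p_v\colon\Delta[n]\times_B^v X\to\Delta[n]$ and $\5p_{c_b}\colon\Delta[n]\times_B^{c_b} X\to\Delta[n]$ are fibrewise homotopy equivalent over the constant diagram $\Delta[n]$. Since $c_b$ factors through the terminal object as $\Delta[n]\to\Delta[0]\xrightarrow{b}B$, inspecting the structural operators identifies the pullback along $c_b$ with the trivial bundle, so $\5p_{c_b}$ is the projection $\pr\colon\Delta[n]\times F_b\to\Delta[n]$. By Lemma~\ref{L:pullback of min.f}, both $\5p_v$ and this projection are minimal fibrations over $\Delta[n]$; hence Corollary~\ref{C:Isomorphism between minimal fibrations} yields an isomorphism $\Delta[n]\times F_b\xrightarrow{\cong}\Delta[n]\times_B X$ over $\Delta[n]$, and precomposing it with $\Id_{\Delta[n]}\times\iota_b$ gives the desired $\alpha_p(v)$. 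Commutativity of~\eqref{alpha-C-atlas} is then automatic, because its right-hand square is a pullback. Thus $\{\alpha_p(v)\}_{v\in B}$ is a $\calc$-atlas, and evaluating the associated maps $\beta_p(v)=\5v\circ\alpha_p(v)$ on the fundamental simplex of $\Delta[n]$ shows that $p$ hits every simplex of $B$ at every object of $\calc$ (assuming, as we may, that $F$ is non-empty at each object), so $p$ is an epimorphism and therefore a $\calc$-fibre bundle with fibre $F\cong F_b$.

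This is just the classical ``minimal fibrations are locally trivial'' argument transplanted to diagrams, so there is no real obstacle; the only point requiring care is the bookkeeping --- checking that the pullback of $p$ along the constant map $c_b$ coincides, face and degeneracy operators included, with the product $\calc$-diagram $\Delta[n]\times F_b$, and that Lemmas~\ref{L:f.h.e} and~\ref{L:pullback of min.f} together with Corollary~\ref{C:Isomorphism between minimal fibrations} are being applied to fibrations over the \emph{constant} $\calc$-diagram $\Delta[n]$, which is legitimate since $\Delta[n]$, $B$ and hence all the pullbacks occurring here live over a constant base.
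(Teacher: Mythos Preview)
Your argument is correct and follows essentially the same route as the paper's own proof, which simply cites Lemma~\ref{L:pullback of min.f}, Lemma~\ref{L:f.h.e}, and Corollary~\ref{C:Isomorphism between minimal fibrations} together with the contractibility of $\Delta[n]$. You have spelled out the details the paper leaves implicit, including the identification of the pullback along the constant map with the trivial bundle and the epimorphism check.
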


\begin{proof} Since $\Delta[n]$ is contractible, the result follows from Lemma~\ref{L:pullback of min.f}, Corollary~\ref{C:Isomorphism between minimal fibrations} and  Lemma~\ref{L:f.h.e}.
\end{proof}

\begin{Def}
 A minimal cofibrant-fibrant replacement for $F$ is a  $\calc$-diagram $MF$, weakly equivalent to $F$,
 which is a minimal  $\calc$-diagram in the sense of Definition~\ref{D:Minimal fibration-diagrams}.
\end{Def}

\begin{Pro}\label{(C)1stPart} Let $\mathcal{C}$ be a small artinian $EI$-category.
\begin{enumerate}[\rm (a)]
\item  Each $\calc$-diagram $F$ admits a minimal cofibrant-fibrant replacement.
If $F$ is a fibrant $\calc$-diagram, there is a choice of a minimal cofibrant-fibrant replacement $MF$ for which there is a weak equivalence
\begin{equation}\label{Min-repla}
  \kappa\colon MF \Right2{\simeq_w} F\,.
\end{equation}

\item If $B$ is a connected simplicial set,
any fibration of $\calc$-diagrams $p\colon X\Right0{} B$ over the
constant diagram $B$
with fibre weakly homotopy equivalent to a $\calc$-diagram $F$
is weakly homotopy equivalent
to a minimal fibration $\4p\colon MX \Right0{} B$ with fibres isomorphic to $MF$,
a minimal cofibrant-fibrant replacement of $F$.
\end{enumerate}
\end{Pro}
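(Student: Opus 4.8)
The plan is to build $MF$ and $\4p$ by composing three standard constructions — a cofibrant (free) replacement, a fibrant replacement, and the passage to a minimal model via Theorem~\ref{T:const-min-fibrat(A)} — while keeping track of freeness at each stage, and then to read off the fibre of the minimal fibration from Lemma~\ref{fibres}.

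For part (a) I would treat the fibrant case first. Using Remark~\ref{R:Quillen-SOA}, factor $\emptyset\to F$ as $\emptyset\hookrightarrow QF\Right1{r}F$ with $QF$ free and $r$ a trivial fibration; when $F$ is fibrant, the composite $QF\Right1{r}F\Right1{}\ast$ shows $QF$ is fibrant too. I then apply Theorem~\ref{T:const-min-fibrat(A)} to the fibration $QF\to\ast$ (admissible since $QF$ is free) and obtain a minimal fibration $MF\to\ast$ — i.e.\ $MF$ is a minimal $\calc$-diagram — realized as a strong fibrewise deformation retract $j\colon MF\hookrightarrow QF$. Since $j$ is a simplicial homotopy equivalence it is a weak equivalence, so $MF$ is weakly equivalent to $F$ and $\kappa\defeq r\circ j\colon MF\Right1{\simeq_w}F$ is the map~\eqref{Min-repla}. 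For an arbitrary $F$ the only modification is to insert, before applying Theorem~\ref{T:const-min-fibrat(A)}, a fibrant replacement $QF\Right1{\simeq_w}\5{QF}\Right1{}\ast$ built by the small object argument on the generating trivial cofibrations $\lambda^c_{n,k}\hookrightarrow\delta^c_n$; as these maps are free, $\5{QF}$ is again a free $\calc$-diagram (a transfinite composite of pushouts of free maps on a free diagram, as recorded after Example~\ref{attach1}), and running the fibrant case on $\5{QF}$ yields a minimal $\calc$-diagram $MF$ weakly equivalent to $F$ through the zig-zag $MF\hookrightarrow\5{QF}\hookleftarrow QF\Right1{\simeq_w}F$.

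For part (b), given the fibration $p\colon X\to B$, I would first produce a weakly homotopy equivalent fibration over the same base $B$ with free total space. Factor $\emptyset\to X$ as $\emptyset\hookrightarrow QX\Right1{r}X$ by Remark~\ref{R:Quillen-SOA}, so $QX$ is free and $r$ is a \emph{trivial fibration}; then $p\circ r\colon QX\to B$ is a composite of fibrations, hence a fibration, and $r$ — a weak equivalence over $B$ between the fibrations $p\circ r$ and $p$ — exhibits $p\circ r$ and $p$ as weakly homotopy equivalent in the sense of Definition~\ref{w-eq-fibs}. By the discussion following that definition, each fibre $(QX)_b$ is then weakly homotopy equivalent to $X_b$, hence to $F$. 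Next I apply Theorem~\ref{T:const-min-fibrat(A)} to $p\circ r\colon QX\to B$, obtaining a minimal fibration $\4p\colon MX\to B$ with $MX$ free, realized as a strong fibrewise deformation retract of $p\circ r$; the deformation-retract inclusion $MX\hookrightarrow QX$ is a weak equivalence over $B$ between the fibrations $\4p$ and $p\circ r$, so $\4p$ is weakly homotopy equivalent to $p\circ r$, and hence to $p$. Finally, as $B$ is connected, Lemma~\ref{fibres} gives that for any vertex $b$ the fibre $MF\defeq(MX)_b$ is a minimal $\calc$-diagram and that all fibres of $\4p$ over vertices of $B$ are mutually isomorphic; restricting the deformation retract over $b$ exhibits $(MX)_b\hookrightarrow(QX)_b$ as a fibrewise homotopy equivalence, whence $MF$ is weakly equivalent to $(QX)_b$ and so to $F$. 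Thus $MF$ is a minimal cofibrant-fibrant replacement of $F$, and $\4p$ is a minimal fibration weakly homotopy equivalent to $p$ with all fibres isomorphic to $MF$.

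The only genuine points requiring care — the rest being assembly of Theorem~\ref{T:const-min-fibrat(A)}, Lemma~\ref{L:pullback of min.f} and Lemma~\ref{fibres} — are: (i) that the replacements stay free, which uses that the maps $\lambda^c_{n,k}\hookrightarrow\delta^c_n$ are free together with the closure of free diagrams under transfinite pushouts noted after Example~\ref{attach1}, and crucially the observation that the cofibrant replacement $QX\to X$ of Remark~\ref{R:Quillen-SOA} is already a (trivial) \emph{fibration}, so that $p\circ r$ is a fibration without any further factorization over $B$; and (ii) that a weak equivalence over $B$ between two fibrations restricts to weak equivalences on the fibres — the right-properness-type fact invoked in the discussion after Definition~\ref{w-eq-fibs}, which, if a more self-contained argument is wanted, I would justify objectwise from right properness of $\mathbf{S}$.
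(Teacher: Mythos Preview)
Your proposal is correct and follows essentially the same approach as the paper: free replacement via Remark~\ref{R:Quillen-SOA}, then Theorem~\ref{T:const-min-fibrat(A)} to pass to a minimal model, with Lemma~\ref{fibres} identifying the fibre. The only organizational difference is that the paper proves (b) first and then obtains (a) as the special case $B=\ast$, $X=F$ (reading $\kappa$ directly off the diagram~\eqref{constminimalfib}), whereas you treat (a) independently; this lets the paper avoid your extra step of arguing that the fibrant replacement $\5{QF}$ remains free, since in their order the non-fibrant case just means first replacing $F$ by a fibrant $\4F$ and then rerunning (b).
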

\begin{proof}\textbf{(b) }
We build a diagram of fibrations
\begin{equation}\label{constminimalfib}
\xymatrix{ X\ar[d]_p     & QX\ar[d]_{p'}
                      \ar[l]_(0.50){\simeq}   & MX\ar[d]^{\widehat{p}}\ar[l]_{\simeq} \\
             B    \ar@{=}[r]     & B \ar@{=}[r]  & B                         }
\end{equation}
by first applying Quillen's small objects argument to the $\calc$-diagram $X$, thus obtaining
a trivial fibration $QX\Right0{}X$ with $QX$ a free $\calc$-diagram (see Remark \ref{R:Quillen-SOA}).
The map $p'\colon QX\Right0{}B$, making the square in the left of the above diagram
commutative, is then defined as the composition of two fibrations and it is therefore a fibration.
By Theorem \ref{T:const-min-fibrat(A)}, $p'\colon QX\Right0{}B$ has a fibrewise deformation retract
$\4p\colon MX\Right0{} B$
which is a minimal fibration.

By Lemma~\ref{fibres}(a) the fibres over the different vertices of $B$ are isomorphic.
We can set $MF= (MX)_b $, the fibre of $\4p$ over a chosen vertex $b\in B$.

By pulling back the fibrations of \eqref{constminimalfib} along the inclusion of the vertex $b$,
we get a map between the fibres of $p$ and of $\4p$
which is a weak homotopy equivalence
$X_b \Left0{} (MX)_b$. By assumption $ X_b\simeq_w F$ are weak homotopy equivalent.
$MF = (MX)_b$ is a minimal cofibrant-fibrant $\calc$-diagram, hence a minimal cofibrant-fibrant
replacement for $F$.

\noindent \textbf{(a) } This follows now from (b) with $B=*$ a point and $X=F$.
If $F$ is a fibrant $\calc$-diagram, then \eqref{constminimalfib} provides the map
$\kappa\colon MF \Right0{\simeq_w} F$.

If $F$ is not fibrant, then we take first a fibrant approximation $F\Right0{}\4F$, and then apply the above construction to $\4F$ and get $F\Right0{\simeq_w}\4F \Left0{\ \simeq_w}M\4F$.
\end{proof}

\begin{Pro}\label{(C)2ndPart} Let $\mathcal{C}$ be a small artinian $EI$-category.
\begin{enumerate}[\rm (a)]
\item  Two minimal cofibrant-fibrant replacements for a $\calc$-diagram $F$ are isomorphic.
\item  Two weakly homotopy equivalent minimal fibrations of $\calc$-diagrams over a constant
diagram $B$ are isomorphic.
\end{enumerate}
\end{Pro}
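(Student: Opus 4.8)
The plan is to deduce both statements from Corollary~\ref{C:Isomorphism between minimal fibrations}, which already asserts that two minimal fibrations over a common base that are \emph{fibrewise} homotopy equivalent are isomorphic; so the only real work is to upgrade the hypothesis of \emph{weak} homotopy equivalence — a finite zig-zag of weak equivalences over $B$, as in Definition~\ref{w-eq-fibs} — into an honest fibrewise homotopy equivalence. Moreover (a) is just the case $B=\ast$ of (b): two minimal cofibrant--fibrant replacements of $F$ are each weakly equivalent to $F$, hence to each other, and therefore are weakly homotopy equivalent minimal fibrations over $\ast$ in the sense of Definition~\ref{w-eq-fibs} (invoking Remark~\ref{rem-w.eq.} to drop the requirement that the intermediate terms be fibrations). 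So I would prove (b) and then specialize.

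For (b), let $p\colon X\to B$ and $p'\colon X'\to B$ be minimal fibrations over the constant diagram $B$; then $X$, $X'$ are free --- hence cofibrant --- and $p$, $p'$ are fibrations. I would work in the slice model category $\mathbf{S}^{\calc}\downarrow B$ (\cite[7.6.5]{Hirschhorn}), whose weak equivalences, fibrations and cofibrations are detected on underlying maps in $\mathbf{S}^{\calc}$: its fibrant objects are exactly the fibrations with target $B$, and its cofibrant objects are exactly those whose total space is cofibrant in $\mathbf{S}^{\calc}$. Thus $p$ and $p'$ are simultaneously cofibrant and fibrant in $\mathbf{S}^{\calc}\downarrow B$. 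This slice is itself simplicial, with $(Z\to B)\otimes K$ given by $Z\times K\to Z\to B$, and for cofibrant $X$ the object $X\times\Delta[1]\to B$ is a cylinder on $X\to B$ (by Proposition~\ref{cof}); consequently a left homotopy in $\mathbf{S}^{\calc}\downarrow B$ is exactly a fibrewise homotopy relative to $B$ in the sense of Section~\ref{MinFib}.

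Since $p$ and $p'$ are joined by a finite zig-zag of weak equivalences over $B$, they become isomorphic in the homotopy category of $\mathbf{S}^{\calc}\downarrow B$; as both are cofibrant--fibrant, this isomorphism is represented by an actual map $h\colon X\to X'$ over $B$, which is therefore a weak equivalence. A weak equivalence between cofibrant--fibrant objects of a model category is a homotopy equivalence, so $h$ is a fibrewise homotopy equivalence over $B$, and Corollary~\ref{C:Isomorphism between minimal fibrations} produces an isomorphism $X\cong X'$ over $B$. Specializing to $B=\ast$, where the slice is $\mathbf{S}^{\calc}$ itself and ``fibrewise over $\ast$'' means ordinary simplicial homotopy, yields (a).

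The step I expect to require the most care is the bookkeeping that identifies the abstract left-homotopy relation of $\mathbf{S}^{\calc}\downarrow B$ with the concrete fibrewise-homotopy relation of Section~\ref{MinFib}, and the (routine but worth recording) reduction of a zig-zag of weak equivalences to a single homotopy equivalence between its cofibrant--fibrant endpoints. If one prefers not to invoke the slice model structure, the map $h$ can instead be constructed directly inside $\mathbf{S}^{\calc}$: factor each weak equivalence of the zig-zag as a trivial cofibration followed by a trivial fibration and then use Axiom M4 of Definition~\ref{D:Model-Category} --- against the fibrations $p$, $p'$ on one side and the cofibrancy of $X$, $X'$ on the other --- to pick compatible sections and retractions and splice the resulting fibrewise homotopies; this merely reproves the slice statement by hand.
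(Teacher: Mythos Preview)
Your proposal is correct and follows the same overall strategy as the paper: reduce to Corollary~\ref{C:Isomorphism between minimal fibrations} by upgrading the zig-zag of weak equivalences to a genuine fibrewise homotopy equivalence, and then deduce (a) from (b) by taking $B=\ast$.

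The one noteworthy difference is in packaging. You invoke the slice model category $\mathbf{S}^{\calc}\downarrow B$ and apply the abstract Whitehead theorem there, which is clean and requires only the bookkeeping you flag (identifying left homotopy in the slice with fibrewise homotopy). The paper, by contrast, deliberately \emph{avoids} appealing to the model structure on $\mathbf{S}^{\calc}\downarrow B$: it first replaces the intermediate $X_i$ by cofibrant $QX_i$, factors each weak equivalence in the zig-zag as a trivial cofibration followed by a trivial fibration over $B$, and then builds the fibrewise homotopy inverses by hand using Axiom~M4 and Proposition~\ref{cof}. This is exactly the alternative you sketch in your final paragraph. Your route is shorter and more conceptual; the paper's is more self-contained and makes the fibrewise homotopies explicit without needing to verify the simplicial model axioms for the slice.
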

\begin{proof}
 \textbf{(b) } Assume that $p\colon M\Right0{} B$ and
$p'\colon M'\Right0{} B$ are minimal fibrations of $\calc$-diagrams.
If they are weakly
homotopy equivalent,  then there is a finite zig-zag of weak equivalences of fibrations
\begin{equation}\label{zig-zagmin}
\xymatrix{\llap{$M=\,$} X_0 \ar[r]^-{\simeq_w} \ar[d]_p &  X_1 \ar[d]  &   \ar[l]_-{\simeq_w}
                         X_2 \ar[r]^-{\simeq_w}  \ar[d]
       &  X_3 \ar[d]\ar[r]^-{\simeq_w}   &  \dots \ar[r]^-{\simeq_w}
       &  X_{n-1}   \ar[d] &    \ar[l]_-{\simeq_w}  X_n \ar[d]^{p'}  \rlap{$\,=M'$}\\
 B \ar@{=}[r]& B \ar@{=}[r] & B \ar@{=}[r] & B\ar@{=}[r]  & \dots \ar@{=}[r]  & B \ar@{=}[r]
 & B\rlap{.}
}
\end{equation}
Upon
replacing $X_i$ by   $QX_i$, $i=1,\dots, n-1$
 (see Remark~\ref{R:Quillen-SOA}) we can assume that all of the $X_i$'s are
  cofibrant $\calc$-diagrams. $M$, and $M'$ are also cofibrant since the fibrations $p$, $p'$ are minimal.

Whitehead theorem for
model categories  implies that these are indeed homotopy equivalences.
We will only sketch some small changes to the classical proof
(cf.~\cite{Goerss-Jardine, Hirschhorn})
that in our situation avoids the model category structure of
$\Cdiag\dn  B$.

We can assume that each of the maps in the top horizontal line of digram~\eqref{zig-zagmin}
is either a trivial cofibration or a trivial fibration. Otherwise, factor a weak equivalence
$X_i\Right0{} X_{i\pm1}$
 into
 a cofibration followed by a trivial fibration $X_i\Right0{}Z\Right0{} X_{i\pm1}$
over $B$. It turns out that $X_i\Right0{}Z$ is actually a trivial cofibration
and $Z\Right0{}B$ is a fibration with $Z$ a cofibrant $\calc$-diagram.

Assume that one of the maps $f\colon X_i\Right0{}X_{i\pm1}$ is
 a trivial cofibration.
In this case, we obtain  a left inverse
$g$ of $f$ over $B$ as a dotted lift in the following solid diagram
 \begin{equation*}
\xymatrix@C=3em{X_i  \ar@{=}[r]  \ar[d]_f   &   X_i  \ar[d]^{p_i} \\
    X_{i\pm1} \ar@{.>}[ru]^g  \ar[r]_{p_{i\pm1}} &  B
      }
\end{equation*}
A homotopy $h$ over $B$
 between $g\circ f$ and $\Id_{X_i}$ is provided by the dotted lift in the following solid diagram
  \begin{equation*}
\xymatrix@C=8em{\dot\Delta[1]\times X_{i\pm1} \bigcup_{\dot\Delta[1]\times {X_{i}}} \Delta[1]\times {X_{i}}
 \ar[r]^-{(f\circ g\cup \Id_{X_{i\pm1}})\bigcup f\circ\pr}
 \ar[d]_{\incl\times\Id_{X_{i\pm1}}\cup \Id_{\Delta[1]}\times f}
                                       &   X_{i\pm1}  \ar[d]^{p_{i\pm1}}   \\
   \Delta[1]\times X_{i\pm1}  \ar@{.>}[ru]^h  \ar[r]^-{p_{i\pm1}\circ \pr}   &  B
      }
\end{equation*}
where the left vertical map is a trivial cofibration by Proposition~\ref{cof}.

A similar argument shows that if $f\colon X_i\Right0{}X_{\pm1}$ is a trivial fibration,
then it is a homotopy equivalence over $B$. Since each of the maps $p_i\colon X_i\Right0{}B$
is a fibration, the composition of the fibrewise homotopy equivalences of
diagram~\eqref{zig-zagmin} is a fibrewise homotopy equivalence $M\Right0{}M'$ over $B$.
Now, the result follows by Corollary~\ref{C:Isomorphism between minimal fibrations}.

\noindent \textbf{(a) } This follows now from (b), taking $B=*$ and $M=F$.
\end{proof}

\medskip

By Corollary~\ref{C:Min.fib is FB}, and assuming that $B$ is connected,
a minimal fibration $\4p\colon MX \Right0{} B$  with
fibres isomorphic to $MF$, like the one obtained
in Proposition~\ref{(C)1stPart}(b), is indeed a
 a fibre bundle
with fibre $MF$ and structural group
$\aut_\calc(MF)$, by default.
We will describe
the homotopy type of $\aut_\calc(MF)$ in terms of the original $F$, assuming that $F$ is fibrant.
This is based on work of Dwyer and Kan on function complexes \cite{Dwyer}.
We will need to introduce now the notion of  twisted arrow category.

\begin{Def}
Let $\calc$ be a small category. The \emph{twisted arrow category} $\acalc$ of $\calc$
is the category whose objects are the morphisms $f\colon a\Right0{}b$
of $\calc$,  and morphisms from $f\colon a\Right0{}b$ to $f'\colon a'\Right0{}b'$ the pairs of
morphisms $(\alpha,\beta)$, where $\alpha\in\Mor_\calc(a', a)$,  $\beta\in\Mor_\calc(b,b')$,
and the diagram
$$
 \xymatrix{ a \ar[r]^f  &   b\ar[d]^\beta  \\
                 a'\ar[u]^\alpha \ar[r]^{f'} &  b'
 }
$$
commutes in $\calc$.
\end{Def}

Following \cite{Dwyer}, if  $X$, $Y$ are $\calc$-diagrams, we define a
diagram of function complexes, indexed by the twisted  arrow category $\acalc$,
$$\mapa (X,Y)\colon \acalc\Right2{} \sSets$$
that maps an object $f\colon a\Right0{}b$ to $\map(X(a), Y(b))$ and a morphism $(\alpha, \beta)$ from
$f$ to $f'$ to the map
$\alpha^\sharp\beta_\sharp\colon \map(X(a), Y(b))\Right0{}\map(X(a'), Y(b'))$
induced by right and left composition.

 Naturality of these function complex diagrams is easily checked. A map of $\calc$-diagrams
 $\theta\colon X'\Right0{}X$ induce a map of $\acalc$-diagrams
 $\theta^*\colon \mapa(X,Y)\Right0{}\mapa(X',Y)$,  which for each $\varphi\in \map(X(a), Y(b))$ is defined over every object $f\colon a\Right0{} b$ of $\acalc$,
 by the precomposition
 $\theta^*_f(\varphi)=\varphi\circ \theta(a)$.
  Similarly, a map  of $\calc$-diagrams
 $\theta\colon Y\Right0{}Y'$ induce a map of $\acalc$-diagrams
 $\theta_*\colon \mapa(X,Y)\Right0{}\mapa(X,Y')$, by postcomposition.
 It follows that the function complex diagram
 is a bifunctor
\begin{equation}\label{mapafunctor}
 \mapa(-,-)\colon (\Cdiag)\op \times \Cdiag \Right2{} \aCdiag\,.
\end{equation}

One clearly has an isomorphism
$\map_\calc(X,Y) \cong \varprojlim_{\acalc}\mapa (X,Y)$
and a natural map
\begin{equation}\label{lim-holim}
 \varprojlim_{\acalc}\mapa (X,Y) \Right2{\simeq} \holim_{\acalc}\mapa (X,Y)
\end{equation}
which according to \cite[Theorem~3.3]{Dwyer} is a weak homotopy equivalence.
Recall that the inverse limit
$\varprojlim_{\acalc}\mapa (X,Y) $ can be defined as the function complex
$\map_{\acalc}(*,\mapa (X,Y))$, where $*$ stands for a constant functor with value a single point.
Likewise,
\begin{equation}\label{h.inv.lim}
 \holim_{\acalc}\mapa (X,Y) = \map_{\acalc}\bigl( E(\acalc)\,,\,\mapa (X,Y)\bigr)\,,
\end{equation}
where
$E(\acalc)$ is the cofibrant replacement of $*$ in the model category $\aCdiag$,
which for each object $f$ of $\acalc$,
$E(\acalc)(f)$ is defined as the nerve of the overcategory
$\acalc\dn  f$.  $E(\acalc)$ is indeed a free
$\acalc$-diagram.
The map in
\eqref{lim-holim} is then
induced by the natural projection $E(\acalc)\Right0{}*$.

\begin{Lem}\label{fibrantmapa} Let $\calc$ be a small category and $X$ and $Y$ $\calc$-diagrams. Assume that $Y$
is fibrant, then
\begin{enumerate}[\rm (a)]
\item $\mapa(X,Y)$ is a fibrant $\acalc$-diagram, and
\item  $\holim_{\acalc}\mapa (X,Y)$ is a Kan complex.
\end{enumerate}
\end{Lem}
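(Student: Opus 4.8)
The plan is to derive both statements from the enriched lifting axiom~M7 of Definition~\ref{D:Model-Category}, used in $\mathbf{S}$ for~(a) and in $\mathbf{S}^{\acalc}$ for~(b), after recording one preliminary observation: a diagram of simplicial sets over any small category is fibrant as soon as it is objectwise a Kan complex. This is exactly the remark following the definition of fibration (a map of diagrams is a fibration whenever it restricts to a Kan fibration over each object), and it is also immediate from the adjunction~\eqref{adjuntion1}, since the generating trivial cofibrations of $\Cdiag$ are the maps $\lambda^c_{n,k}\hookrightarrow\delta^c_n=\imath_{c,*}(\Lambda^k[n]\hookrightarrow\Delta[n])$. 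I will use this with $\calc$ replaced by the (small) twisted arrow category $\acalc$, and I will use that $\mathbf{S}^{\acalc}$, being again a category of diagrams over a small category, is a cofibrantly generated simplicial model category carrying the same formalism as $\Cdiag$, so that M7 is available there.

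For part~(a), by the preliminary observation it suffices to check that for every object $f\colon a\to b$ of $\acalc$ the simplicial set $\mapa(X,Y)(f)=\map(X(a),Y(b))$ is a Kan complex. Since $Y$ is fibrant, $Y(b)$ is a Kan complex, hence $Y(b)\to *$ is a fibration of simplicial sets; and $\emptyset\to X(a)$ is a cofibration of simplicial sets. Applying M7 in $\mathbf{S}$ to this cofibration and this fibration, the induced map $\map(X(a),Y(b))\to\map(\emptyset,Y(b))\times_{\map(\emptyset,*)}\map(X(a),*)=*$ is a fibration, so $\map(X(a),Y(b))$ is a Kan complex. Therefore $\mapa(X,Y)$ is objectwise fibrant, hence a fibrant $\acalc$-diagram.

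For part~(b), I will use the description~\eqref{h.inv.lim}, namely $\holim_{\acalc}\mapa(X,Y)=\map_{\acalc}(E(\acalc),\mapa(X,Y))$, together with the fact recalled in the excerpt that $E(\acalc)$ is a free, hence cofibrant, $\acalc$-diagram, so that $\emptyset\to E(\acalc)$ is a cofibration in $\mathbf{S}^{\acalc}$. By part~(a), $\mapa(X,Y)\to *$ is a fibration in $\mathbf{S}^{\acalc}$. Applying M7 in the simplicial model category $\mathbf{S}^{\acalc}$ to this cofibration and this fibration, the map $\map_{\acalc}(E(\acalc),\mapa(X,Y))\to\map_{\acalc}(\emptyset,\mapa(X,Y))\times_{\map_{\acalc}(\emptyset,*)}\map_{\acalc}(E(\acalc),*)=*$ is a fibration of simplicial sets, so $\holim_{\acalc}\mapa(X,Y)$ is a Kan complex. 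No serious obstacle is expected here; the only points needing care are the identification of fibrant diagrams with objectwise Kan diagrams and the transfer of the simplicial model structure (with $E(\acalc)$ cofibrant) from $\Cdiag$ to $\mathbf{S}^{\acalc}$, so that the enriched lifting axiom can be invoked in both $\mathbf{S}$ and $\mathbf{S}^{\acalc}$.
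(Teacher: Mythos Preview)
Your proof is correct and follows essentially the same approach as the paper's. The paper likewise reduces (a) to the fact that $\map(X(a),Y(b))$ is a Kan complex whenever $Y(b)$ is (citing \cite[I.6.9]{May} rather than spelling it out via M7 as you do), and derives (b) from axiom~M7 in $\mathbf{S}^{\acalc}$ applied to the cofibrant $E(\acalc)$ and the fibrant $\mapa(X,Y)$.
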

\begin{proof}
Since $Y$ is fibrant, for each $c\in \calc$, $Y(c)$ is a Kan complex. Then, for
each object $f\colon a\Right0{}b$ in the arrow category, $\mapa(X,Y)(f)= \map(X(a),Y(b))$
is also a Kan complex
(cf.~\cite[I.6.9]{May}),  so  $\mapa(X,Y)$ is a fibrant $\acalc$-diagram.
Using the description of the homotopy inverse limit in equation \eqref{h.inv.lim}, (b) follows
essentially from axiom M\ref{axM7} in Definition \ref{D:Model-Category}
(see also \cite[II.3.2]{Goerss-Jardine},
\cite[9.3.1]{Hirschhorn}), since
$E(\acalc)$ is cofibrant and $\mapa(X,Y)$ is fibrant.
\end{proof}

For each object $f\colon a\Right0{} b$ of $\acalc$, $E(\acalc)(f)$ has a canonical base point, the vertex
$v_f$ corresponding to the terminal object of the overcategory $\acalc\dn f$. Evaluating at
the vertices $v_{\Id_a}$, $a\in \Ob(\calc)$, we obtain a map
\begin{equation}
\holim_{\acalc}\mapa (X,Y)  \Right2{\ev_\Id} \prod_{a\in\acalc} \map(X(a), Y(a))
\end{equation}
such that the composition
\begin{equation}\map_\calc(X,Y) \cong
\varprojlim_{\acalc}\mapa (X,Y) \Right0{}
\holim_{\acalc}\mapa (X,Y) \Right0{}
 \prod_{a\in\acalc} \map(X(a), Y(a))
\end{equation}
 maps a natural transformation $\eta\colon X\Right0{} Y$ to
 the sequence
 $\con{\eta_a\colon X(a)\Right0{} Y(a)}_{a\in \Ob(\acalc)}$
of the natural maps defined for the different objects of $\calc$.
This will help to identify the connected components of
$\holim_{\acalc}\mapa (X,Y) $ that correspond to
invertible maps in $\map_\calc(X,Y)$, as stated in the following definition.

\begin{Def}\label{defhaut}
 Let $X$ be a fibrant $\calc$-diagram. We define the space   $\haut_\calc(X)$
 of self homotopy equivalences  of $X$ as the subspace
$ \bigl[\,\holim_{\acalc}\mapa (X,X) \,\bigr]_{\textup{we}}$
 of $\holim_{\acalc}\mapa (X,X) $ consisting of the connected components of the
 vertices  $\omega$  of $\holim_{\acalc}\mapa (X,Y)$ such that the evaluations
 $\ev_\Id(\omega) = \con{\omega_a\colon X(a)\Right0{}Y(a)}_{\a\in \Ob(\calc)}$
 are weak equivalences. In case $X$ is not fibrant, we define
 $\haut_\calc(X)\defeq\haut_\calc(\4X)$, where $\4X$ is a fibrant replacement of $X$.
\end{Def}

\begin{Pro}\label{haut}
If $X$ is a fibrant $\calc$-diagram, then $\haut_\calc(X)$ is a loop space with classifying space
$B\haut_\calc(X) \simeq  \3W\aut_\calc(MX) $,
where $MX$ is a minimal cofibrant-fibrant replacement for $X$.
\end{Pro}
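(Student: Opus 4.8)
The plan is to transport the whole question to the minimal cofibrant-fibrant replacement $MX$, where the endomorphism function complex is a strict simplicial monoid and its self-equivalences form a simplicial group, and then invoke the $\3W$-construction. First I would note that, since $X$ is fibrant, Proposition~\ref{(C)1stPart}(a) provides a weak equivalence $\kappa\colon MX\Right2{\simeq_w}X$ with $MX$ minimal, hence both cofibrant and fibrant. Feeding $\kappa$ into the bifunctor \eqref{mapafunctor} in each variable I would obtain a zig-zag of $\acalc$-diagrams
$$
   \mapa(X,X)\Right2{\kappa^{*}}\mapa(MX,X)\Left2{\kappa_{*}}\mapa(MX,MX)\,.
$$
Every simplicial set is cofibrant and $X$, $MX$ are fibrant, so by the standard consequences of axiom M\ref{axM7} of Definition~\ref{D:Model-Category} each of these maps is an objectwise weak equivalence; moreover all three diagrams are fibrant $\acalc$-diagrams by Lemma~\ref{fibrantmapa}(a). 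Hence $\holim_{\acalc}$ sends the zig-zag to a zig-zag of weak equivalences of Kan complexes (Lemma~\ref{fibrantmapa}(b)), and composing with \eqref{lim-holim} for $\mapa(MX,MX)$ I land at a natural chain of weak equivalences between $\holim_{\acalc}\mapa(X,X)$ and the simplicial monoid $\map_\calc(MX,MX)$. Since every arrow in the chain is induced by an actual map of $\calc$-diagrams, it commutes strictly with the evaluation maps $\ev_\Id$ and with the structure maps of the function-complex bifunctor; this is what will allow me to transport not merely the weak homotopy type but the loop-space structure.

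Next I would identify the distinguished components. The crucial input is minimality: if $\varphi\colon MX\to MX$ is a self-weak-equivalence, choose a homotopy inverse $\psi$ (possible since $MX$ is cofibrant-fibrant); then $\psi\varphi$ is homotopic to $\Id_{MX}$, so Proposition~\ref{P:strong-isom-min.f} applied to the minimal fibration $MX\to\ast$ forces $\psi\varphi$, and symmetrically $\varphi\psi$, to be isomorphisms, whence $\varphi$ is an isomorphism. Running the same argument over the minimal fibration $MX\times\Delta[n]\to\Delta[n]$ (minimal by Lemma~\ref{L:pullback of min.f}), an $n$-simplex of $\map_\calc(MX,MX)$ all of whose vertices are weak equivalences already lies in $\aut_\calc(MX)_n$. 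Therefore the sub-simplicial set of $\map_\calc(MX,MX)$ spanned by the weak-equivalence components coincides with the simplicial subgroup $\aut_\calc(MX)$. Since on the $\map_\calc$-side the map $\ev_\Id$ is the identity and the chain above is $\ev_\Id$-compatible, the weak-equivalence components of $\holim_{\acalc}\mapa(X,X)$ — that is, $\haut_\calc(X)$ in the sense of Definition~\ref{defhaut} — correspond exactly to $\aut_\calc(MX)$, so the chain restricts to a chain of weak equivalences $\haut_\calc(X)\simeq\aut_\calc(MX)$.

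Finally I would invoke that $\aut_\calc(MX)$ is a simplicial group, hence a loop space: the universal bundle $W\aut_\calc(MX)\to\3W\aut_\calc(MX)$ is a Kan fibration with contractible total space and fibre $\aut_\calc(MX)$, so $\aut_\calc(MX)\simeq\Omega\,\3W\aut_\calc(MX)$. Because a space weakly equivalent to a loop space is again a loop space, with the same delooping up to weak equivalence, $\haut_\calc(X)$ is a loop space and $B\haut_\calc(X)\simeq\3W\aut_\calc(MX)$; note that $MX$, and therefore $\aut_\calc(MX)$ and $\3W\aut_\calc(MX)$, is well-defined up to isomorphism by Proposition~\ref{(C)2ndPart}(a). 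I expect the main obstacle to be the compatibility claims used in the first two steps: one must check that the Dwyer--Kan zig-zag is compatible both with the composition structure (so that the loop-space structure, not merely the weak homotopy type, is transported) and with the identification of the self-equivalence components. The cleanest route is to exploit that each map in the chain is induced by a genuine map of $\calc$-diagrams, reducing these assertions to the already-understood strict model $\map_\calc(MX,MX)$ and its simplicial subgroup $\aut_\calc(MX)$.
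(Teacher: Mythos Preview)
Your proposal is correct and follows essentially the same route as the paper: build the Dwyer--Kan zig-zag $\map_\calc(MX,MX)\simeq\holim_{\acalc}\mapa(MX,MX)\simeq\holim_{\acalc}\mapa(MX,X)\simeq\holim_{\acalc}\mapa(X,X)$ via $\kappa^*$ and $\kappa_*$, then match the distinguished components using minimality of $MX$ and the $\ev_\Id$-compatibility of the chain. Your argument that $\aut_\calc(MX)$ coincides with the full weak-equivalence components of $\map_\calc(MX,MX)$, by applying Proposition~\ref{P:strong-isom-min.f} to the minimal fibration $MX\times\Delta[n]\to\Delta[n]$ (Lemma~\ref{L:pullback of min.f}), is a useful refinement: the paper only checks the correspondence at the level of vertices, leaving implicit that $\aut_\calc(MX)$ is a union of path components.
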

\begin{proof}
Notice first that according to
Proposition~\ref{(C)2ndPart}(a), the conclusion of the proposition does not depend on the choice of $MX$.
Furthermore, we can fix a model for $MX$ together with weak homotopy equivalence
$\kappa\colon MX\Right0{\simeq_w}X$ (cf.~Proposition~\ref{(C)1stPart}(a)).

We need to prove that the space of loops $\Omega(\3W\aut_\calc(MX))$ is homotopy equivalent to
$\haut_\calc(X)$, so it will be enough to show a homotopy equivalence $\aut_\calc(MX)\simeq \haut_\calc(X)$.

Observe that by \cite[Theorem~3.3]{Dwyer}, there is a sequence of homotopy equivalences
\begin{equation}\label{h.eq}\map_\calc(MX,MX)
\Right2{\cong} \varprojlim_{\acalc}\mapa (MX,MX) \Right2{\simeq}
               \holim_{\acalc}\mapa (MX,MX)\,.
\end{equation}

Since $\kappa\colon MX\Right0{} X$ is a weak equivalence  of $\calc$-diagrams,
for each object $c$ of $\calc$, the map $\kappa_c\colon (MX)(c) \Right0{} X(c)$ is a weak
 equivalence of simplicial sets. By naturality of $\mapa$ \eqref{mapafunctor}, $\kappa$ induces maps
 of $\acalc$-diagrams
\begin{equation}\label{h.eq.1}
\mapa (X,X)\Right2{\kappa^*}   \mapa (MX,X)\Left2{\kappa_*} \mapa (MX,MX)\,.
\end{equation}
By assumption $X$ is fibrant and so is $MX$, hence by Lemma~\ref{fibrantmapa}, all three
$\acalc$-diagrams
in the above equation \eqref{h.eq.1} are fibrant. Furthermore the maps $\kappa^*$ and
$\kappa_*$ are weak equivalences in $\aCdiag$. In fact,
for each object $f\colon a\Right0{}b$ of $\acalc$, the natural map
 $$
     \kappa^*_f\defeq \kappa_a^\sharp\colon \map\bigl(X(a), X(b)\bigr) \Right0{} \map\bigl((MX)(a),X(b)\bigr)\,,
 $$
is given by precomposition with the map $\kappa_a\colon (MX)(a)\Right0{}X(a)$ which is a weak
equivalence of simplicial sets and $X(b)$ is a Kan complex; therefore we obtain that $\kappa^*_f= \kappa_a^\sharp$ is a weak
equivalence (cf.~\cite[I.6.9]{May}).
A similar argument applies to $\kappa_*$.

 Now, Bousfield-Kan homotopy lemma \cite[XI.5.6]{Bousfield-Kan} applies to show that the induced maps
\begin{equation}\label{h.eq.2}
 \holim_{\acalc}\mapa (X,X)\Right8{ \holim_{\acalc}\kappa^*} \holim_{\acalc}\mapa (MX,X)
 \Left8{ \holim_{\acalc}\kappa_*} \holim_{\acalc}\mapa (MX,MX)
\end{equation}
are weak homotopy equivalences. In other words, this follows from \cite[9.3.3(1)]{Hirschhorn},
by our description of homotopy
inverse limit in Equation \eqref{h.inv.lim} and because the diagram
$E(\acalc)$ is cofibrant in the simplicial model category
of $\acalc$-diagrams.

 The weak equivalences of equations \eqref{h.eq} and  \eqref{h.eq.2}
 combine to provide the following
 zig-zag diagram
 \begin{equation}\label{h.eq.3}\map_\calc(MX,MX) \Right2{}
 \holim_{\acalc}\mapa (MX,X) \Left2{}
 \holim_{\acalc}\mapa (X,X)\,.
 \end{equation}
 Since the above maps are weak equivalences between Kan complexes,
they are also homotopy equivalences (cf.~\cite[II.1.10]{Goerss-Jardine}) and we
 we have already obtained a
 homotopy equivalence $\map_\calc(MX,MX) \simeq  \holim_{\acalc}\mapa (X,X)$.
It only remains to show that the connected components of $\map_\calc(MX,MX) $ that consist of isomorphisms
correspond to those of $\haut_\calc(X)$ in $\holim_{\acalc}\mapa (X,X)$, as defined in \ref{defhaut}, through
the maps in \eqref{h.eq.3}.

Choose a vertex $\eta\in \aut_\calc(MX) \subseteq \map_\calc(MX,MX)$, that is, a natural isomorphism
$\eta\colon MX\Right0{} MX$ of $\calc$-diagrams. The image of $\eta$ along the left map in
 diagram \eqref{h.eq.3}, will be  a vertex $\5\eta\in  \holim_{\acalc}\mapa (MX,X)$
such that $\ev_\Id(\5\eta)= \con{\5\eta_a\colon (MX)(a) \Right0{}X(a)}_{\a\in \Ob(\calc)}$
where $\5\eta_a$ is the composition
$$
MX(a)\Right0{\eta_a}MX(a)\Right0{f_a}X(a),
$$
which is clearly a homotopy equivalence. Now, let $\omega$ be a vertex of $ \holim_{\acalc}\mapa (X,X)$
mapping to the same connected component as $\5\eta$ along the right map of diagram \eqref{h.eq.3}.
It follows that if $\ev_\Id(\sigma) = \con{\omega_a\colon X(a)\Right0{}X(a)}_{\a\in \Ob(\calc)}$,
there is a diagram
\begin{equation}\label{equatingcomponents}
 \xymatrix{ (MX)(a)\ar[d]^{f_a} \ar[r]^{\eta_a}& (MX)(a) \ar[d]^{f_a} \\
                   X(a) \ar[r]^{\omega_a}& X(a) \rlap{ ,}
}
\end{equation}
for each object $a\in \calc$, that commute up to homotopy.
It follows that each $\omega_a$ is a weak homotopy equivalence,
and $\omega$ is a vertex of $\haut_\calc(X)$ by Definition~\ref{defhaut}.

Conversely, choose
 a 0-simplex $\omega$ of $\haut_\calc(X)\subseteq\holim_{\acalc}\mapa (X,X)$ and write
$\ev_\Id(\omega) = \con{\omega_a\colon X(a)\Right0{}X(a)}_{\a\in \Ob(\calc)}$, where
each $\omega_a$ is a weak homotopy equivalence.
Observe that $\omega$ determines a connected component
in  $\holim_{\acalc}\mapa (X,X)$.
Choose a vertex $\eta$ in the corresponding connected component of
$\map_\calc(MX,MX)$, via the homotopy equivalences \eqref{h.eq.3}.
We obtain a homotopy commutative diagram like~\eqref{equatingcomponents}, above,
and conclude that $\eta_a\colon (MX)(a)\Right0{} (MX)(a)$ is a weak homotooy
equivalence for each object $a$ of $\calc$. Hence $\eta\colon MX\Right0{} MX$
is a weak equivalence of $\calc$-diagrams. Since $MX$ is cofibrant-fibrant,
by Whitehead's Theorem $\eta_a$ is a homotopy equivalence. But $MX$ is also a
minimal $\calc$-diagram, hence $\eta$ is
indeed an isomorphism by Corollary~\ref{C:Isomorphism between minimal fibrations}.
Thus $\eta$ is really a vertex of $\aut_\calc(MX)$.
\end{proof}

\begin{proof}[Proof of  Theorem~\ref{T:Clas-C-Fib-1(C)}]
We are assuming that $\mathcal{C}$ is a small artinian $EI$-category,
$B$ a connected simplicial set, and $F$ an arbitrary $\mathcal{C}$-diagram. Let $MF$ be a
minimal cofibrant-fibrant replacement for $F$.

Since $MF$ is minimal, a $\calc$-fibre bundle over $B$ with fibre $MF$,
$\xi\colon E\Right0{}B$, is indeed a
minimal fibration. For each object $c$ of $\calc$,  $(MF)(c)$ is a Kan complex and
$\xi(c)\colon E(c)\Right0{}B$ is a fibre bundle with fibre $(MF)(c)$, hence
a fibration
(cf.~\cite{Barrat}). Thus, $\xi\colon E\Right0{}B$ is a fibration of $\calc$-diagrams.

That $\xi$ is minimal follows from Proposition~\ref{P:Other.Charact.Min.fibrat}.
If $\xi|_D\colon D\Right0{} B$ is a strong fibrewise deformation retract of $\xi$, then for each
simplex $\sigma$ of $B$, the pullback along the inclusion of the simplex
$\sigma\colon \Delta[n]\Right0{}B$ produces a retraction
$$
\xymatrix{ D_\sigma \ar[r]^-{\incl} \ar[d]_{\4{\xi|_{D}}}
             & \Delta[n]\times MF \ar[r]^-{r}\ar[d]^{\pr}
             & D_\sigma\ar[d]^{\4{\xi|_{D}}}  \\
           \Delta[n]\ar@{=}[r] &  \Delta[n]\ar@{=}[r]  &  \Delta[n].
             }
$$
Moreover, since $MF$ is minimal, $\pr\colon \Delta[n]\times MF\Right0{} \Delta[n]$ is a minimal fibration by
Lemma~\ref{L:pullback of min.f}.
Then, Proposition~\ref{P:Other.Charact.Min.fibrat} implies that $\incl\colon D_\sigma\Right0{}
\Delta[n]\times MF$ is an isomorphism, and therefore $\incl\colon D\Right0{} E$ is an isomorphism.
Now, Proposition~\ref{P:Other.Charact.Min.fibrat} implies that $\xi\colon E\Right0{}B$ is minimal.

 Now we can define the correspondence
\begin{equation*}
 \left\{  \parbox{4cm}{\flushleft Equivalence classes
of $\calc$-fibre bundles over $B$ with fibre $MF$}      \right\}
    \RIGHT6{J}{\cong}
 \left\{  \parbox{6.5cm}{\flushleft Weak homotopy classes
of fibrations of $\calc$-diagrams over the constant diagram
$B$ and fibre weakly homotopy equivalent to $F$}      \right\}
\end{equation*}
by assigning to the class of $\calc$-fibre bundle $\xi\colon E\Right0{}B$
the weak homotopy class
of the same map
as a fibration of $\calc$-diagrams with fibre $MF\simeq_w F$.

Proposition~\ref{(C)1stPart}(b) together with Corollary~\ref{C:Min.fib is FB}
imply that $J$ is surjective while
Proposition~\ref{(C)2ndPart}(b) implies that it is injective.
Then, the result follows from Theorem~\ref{T:Classif-C-fibre(B)} and Proposition~\ref{haut}.
\end{proof}

\appendix

\section{Preordered sets}

Let $A$ be a set and `$\preceq$' a preorder relation over $A$ (i.e., $\preceq$ is reflexive and transitive).
For this set we need to find a subset $A'$ of $A$ satisfying the conditions:
\begin{enumerate}[\bf R1:]
\item For all $w\in A$ there exists $x\in A'$ with $x\preceq w$.
\item $A'$ is minimal among subsets satisfying R1.
\end{enumerate}

There is a simple example which shows
that $A'$ does not always exist: consider an infinite sequence of sets and maps
\begin{equation*}
\xymatrix{\dots \ar[r]  & A_n\ar[r]^{f_n}  & \dots \ar[r] & A_1\ar[r]^{f_1}  & A_0 }
\end{equation*}
with nonempty inverse limit. Let us define the relation `$\preceq$' over the disjoint union
$A=\coprod_{i\geq 0}A_i$ as follows: given $a\in A_j$ and $a'\in A_k$ we say
that $a\preceq a'$ if there exists a map $f\colon A_j\Right0{} A_k$ such that $f(a)=a'$.
In this case there is no subset $A'\subseteq A$ satisfying both conditions R1 and R2.
In fact, any element in the inverse limit provides an infinite
sequence  $a_{0}\succeq a_{1}\succeq \dots \succeq a_{i}\succeq \dots $,  in $A$, with $a_i\in A_i$,
 which does not stabilize.

We can reformulate this problem by defining an order relation over a convenient quotient of $A$,
that is, over the preordered set $(A,\preceq)$ we define the following relation:
given $x,w\in A$ we say that $x\sim w $ if $x\preceq w$ and $w\preceq x$.
Note that `$\sim$' is an equivalence relation over $A$. Now, we define an partial order
relation over  $A/{\sim}$.  Given classes $[x],[w]\in A/{\sim}$ we set
$$
     [x]\leq[w]\quad \text{if} \quad x\preceq w\,.
$$
With this new relation the couple $(A/{\sim},\leq)$ is a partially ordered set, and then the existence of a subset
$A'$ satisfying the above required conditions depends on the existence of minimal elements in every maximal
chain of $A/{\sim}$.

Let $\calc$ be a small category and let $X\colon \calc\Right0{}\Sets$ be a functor.
We will say that $X$ is a diagram of sets and equivalence relations if
 for every object $c$ of $\calc$ there is an equivalence relation
$\sim_c$ defined over $X_c$, and is natural in the sense that
 for every
$f\colon c\Right0{} d$ in $\calc$, if $x\sim_c x'$ then  $X_f(x)\sim_d X_f(x')$.

Under these circumstances we can define a new induced relation $\preceq$ over the
disjoint union $A=\coprod_{c\in\Ob(\calc)}X_c$ as follows:
given $x\in X_c$ and $w\in X_d$ we say that $x\preceq w$ if there exists a morphism
$f\colon c\Right0{} d$ in $\calc$ such that $X_f(x)\sim_d w$.
Now, the pair $(A,\preceq)$ is a preordered set.

\begin{Pro}\label{E:Existen-of-A'}
Let $\calc$ be an artinian $EI$-category. If $X\colon \calc\Right0{}\Sets$ is a
diagram of sets and equivalence relations, then the set $A= \coprod_{c\in\Ob(\calc)}X_c$
equipped with the induced preorder relation $\preceq$ defined above contains a subset
$A'$ satisfying the conditions R1 and R2.
\end{Pro}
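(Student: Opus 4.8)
The plan is to analyse the partially ordered set $(A/{\sim},\leq)$ introduced above: I will show it satisfies the descending chain condition and then obtain $A'$ by choosing one element from each minimal $\sim$-class.

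First I would record the effect of the $EI$ hypothesis. If $a\sim b$ in $\calc$ and $f\colon a\to b$ is any morphism, then picking $g\colon b\to a$ the endomorphisms $gf$ and $fg$ are automorphisms, so $f$ has both a left and a right inverse and is an isomorphism. For such an $f$, functoriality makes $X_f\colon X_a\to X_b$ a bijection with inverse $X_{f^{-1}}$, and naturality of the relations $\sim_c$ applied to $f$ and to $f^{-1}$ shows $X_f$ carries $\sim_a$ bijectively onto $\sim_b$; in particular $X_f(x)\sim_b w$ if and only if $x\sim_a X_{f^{-1}}(w)$.

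The key step is to show $(A/{\sim},\leq)$ has no infinite strictly descending chain. Suppose $[w_0]>[w_1]>\cdots$ were one, with each $w_i\in X_{c_i}$ (this $c_i$ is well defined since $A$ is a disjoint union). For every $i$, the relation $[w_{i+1}]\leq[w_i]$ provides a morphism $f_i\colon c_{i+1}\to c_i$ with $X_{f_i}(w_{i+1})\sim_{c_i}w_i$, while $[w_{i+1}]\neq[w_i]$ forces $w_i\not\preceq w_{i+1}$. The assignment $[w]\mapsto[c]$, for $w\in X_c$, is well defined and order preserving, so it yields a descending chain $[c_0]\geq[c_1]\geq\cdots$ in $\Ob(\calc)/{\sim}$; since $\calc$ is artinian this chain stabilizes, so for all large $i$ the objects $c_i$ lie in a single $\sim$-class and, by the first step, each $f_i$ is an isomorphism. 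But then applying $X_{f_i^{-1}}$ to $X_{f_i}(w_{i+1})\sim_{c_i}w_i$ gives $X_{f_i^{-1}}(w_i)\sim_{c_{i+1}}w_{i+1}$, that is, $w_i\preceq w_{i+1}$ via $f_i^{-1}$, contradicting $w_i\not\preceq w_{i+1}$. Hence no such chain exists; $(A/{\sim},\leq)$ satisfies the descending chain condition, and consequently every element of $A/{\sim}$ lies above a minimal element, i.e. every $w\in A$ satisfies $x\preceq w$ for some $\preceq$-minimal $x\in A$.

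Finally I would construct $A'$. Minimality of an element for $\preceq$ depends only on its $\sim$-class, so I choose one representative from each $\sim$-class of $\preceq$-minimal elements of $A$ and let $A'$ be the set of these representatives. Condition R1 is immediate from the previous paragraph together with transitivity of $\preceq$. For R2, if $B\subsetneq A'$ satisfied R1, choose $x\in A'\setminus B$; applying R1 to $x$ yields $b\in B$ with $b\preceq x$, and since $x$ is $\preceq$-minimal also $x\preceq b$, so $b\sim x$; as $A'$ contains exactly one representative per minimal class this forces $b=x$, contradicting $x\notin B$. Thus $A'$ satisfies R1 and R2. I expect the only genuine difficulty to be the middle paragraph — combining artinianness (to stabilize the chain of object classes) with the $EI$ property (to turn the stabilized morphisms into isomorphisms and thereby make $\preceq$ symmetric along the tail); the remaining parts are routine manipulations of the preorder.
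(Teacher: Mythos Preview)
Your proof is correct and rests on the same core idea as the paper's: project a descending chain in $A/{\sim}$ to $\Ob(\calc)/{\sim}$, invoke artinianness to stabilize the object classes, and then use the $EI$ hypothesis to turn the tail morphisms into isomorphisms, forcing the original chain to stabilize as well. The presentation differs slightly: the paper argues that every \emph{maximal} chain in $A/{\sim}$ has a minimal element (invoking the maximal chain principle implicitly), whereas you prove directly that $(A/{\sim},\leq)$ satisfies the descending chain condition and deduce that every element dominates a minimal one. Your route is marginally cleaner, needing only dependent choice rather than Zorn, and your preliminary observation that morphisms between $\sim$-equivalent objects are automatically isomorphisms packages the $EI$ step more transparently than the paper's in-line argument; otherwise the two proofs are interchangeable.
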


\begin{proof}
Define an equivalence relation $\sim$ on $A$ by setting
$x\sim x'$ if both $x\preceq x'$ and  $x'\preceq x$. Write $[x]$ for the class of $x$ in $A/{\sim}$.
Now, define a partial order relation on $A/{\sim}$ by setting
$[x]\leq [y]$ if $x\preceq y$. Consider the set of all maximal chains of $A/{\sim}$:
$$
   \mathcal{F}=\Conj{C\subseteq A/{\sim}}{C \ \text{is a maximal chain}}\,.
$$

Take an arbitrary maximal chain $C = \{[x_i]\}_{i\in I}$ in $A/{\sim}$ with $x_i\in X_{c_i}$,
$c_i\in \Ob(\calc)$, for each $i\in I$. The classes $[c_i]$ of these objects $c_i$, $i\in I$,
form a chain in $\calc/{\sim}$, so it stabilizes
since $\calc$ is artinian (see Definition~\ref{D:wee-descendant}).
In fact, given two of these objects $c_i$ and $c_j$,
we have elements $x_i\in X_{c_i}$ and $x_j\in X_{c_j}$ with $[x_i], [x_j]\in C$, thus either
$[x_i] \leq  [x_j]$ in which case there is a morphism $f\colon c_i \Right0{} c_j$ in $\calc$
and $[c_i]\leq [c_j]$, or $[x_j] \leq  [x_i]$ in which case $[c_j]\leq [c_i]$.

There is a representative  $c_0\in \Ob{\calc}$ of the minimal element $[c_0]$ in the chain of
classes of objects such that there is an element $x_0\in X_{c_0}$ with $[x_0]\in C$.
It turns out that  $[x_0]$ is minimal in $C$.
Assume $[z]\in C$ and $[z] \leq [x_0]$, then $z\in X_{d}$ for some object $d$ of $\calc$
and there is a  morphism $f\colon d\Right0{} c_0$ in $\calc$ such that $X(f)(z)\simeq_{c_0} x_0$. Then,
$[d]$ belongs to the chain of objects and $[d]\leq [c_0]$, hence $[d]= [c_0]$ since $[c_0]$ is minimal.
That is, we also have $[c_0]\leq [d]$, so there is a morphism $g\colon c_0\Right0{} d$.
Since $\calc$ is an $EI$-category, the  composition $\varphi=g\circ f$ is an automorphism of $d$.
Upon replacing $g$ with $\varphi^{-1}\circ g$ we can assume that $g\circ f=\Id_d$.
It follows that $z= X(g)(X(f)(z))\simeq_{d} X(g)(x_0)$, so $x_0\preceq z$ and $[x_0]\leq [z]$.
 Hence $[z] = [x_0]$.

Consider now the set $M$ of minimal elements of the chains in $\calf$,  that is
$$
    M= \Conj{[x]\in A/{\sim}}{\text{there is $C\in \calf$ such that $[x]$ in minimal in $C$}}
$$
By the Axiom of Choice, there is a subset $A'$ that contains one and only one
representative in each of the classes that form the set $M$.

 We will show that $M$ satisfies conditions R1 and R2.
Fix $w\in A$, so that $[w]$ belongs to a maximal chain $C\subseteq A/{\sim}$. Then, there is $x\in A'$
such that $[x]$ is minimal in $C$. It follows that $[x]\leq [w]$, thus $x\preceq w$.  This proves R1.

Assume that there is another subset $B$, $B\subseteq A'\subseteq A$ satisfying R1. Fix an element
$w\in A'$.  There must be an element $x\in B$, $x\preceq w$,  since $B$ satisfies R1.
Hence $[x]\leq [w]$. By definition of $A'$, there is a maximal chain
$C$ in $A/{\sim}$ such that $[w]$ is minimal in $C$. Then $D=\{[x]\} \cup C$ is also a chain and it contains $C$,
hence it must be $C$, so $[x]\in C$. Therefore $[x]=[w]$, but $x\in B\subset A'$, so both $x$ and $w$
are in $A'$, and actually $x=w$. This proves that $A'\subset B$ and hence that $A'$ satisfies R2.
\end{proof}

\begin{Nota}\label{E:Existen-of-refined-A'}
 Notice that the choice of the subset $A'$ in Proposition~\ref{E:Existen-of-A'} could be
 refined by choosing elements satisfying further properties. Let $P\colon A\to \{0,1\}$ be
 a function defined on $A$ and, using the same notation as in the proof of the proposition, set
\begin{equation*}
\4{[x]}=
 \begin{cases}
 [x] \,, & \text{if $P(y)=1$ for all $y\in [x]$}, \\
 \conj{y\in [x]}{P(y)=0 } \,, & \text{otherwise}.
\end{cases}
\end{equation*}
for each class $[x]$  of  $A/{\sim}$ which is minimal in a chain in $(A/{\sim}, \leq)$.
Then, by choosing an element in each of the sets $\4{[x]}$ we obtain a refined
subset $A'$ that satisfies R1, R2, and for each  $x\in A'$,  $P(x)=0$ if and only if there is
$y\in [x]$ with $P(y)=0$.
\end{Nota}

\end{document}